\newtheorem{theorem}{Theorem}[section]
\newtheorem{proposition}[theorem]{Proposition}
\newtheorem{lemma}[theorem]{Lemma}
\newtheorem{corollary}[theorem]{Corollary}
\newtheorem{conjecture}[theorem]{Conjecture}
\theoremstyle{definition}
\newtheorem{example}[theorem]{Example}
\theoremstyle{remark}
\newtheorem{remark}[theorem]{Remark}
\DeclareMathOperator{\rank}{rank}
\DeclareMathOperator{\sgn}{sgn}
\DeclareMathOperator{\cone}{cone}
\DeclareMathOperator{\tr}{tr}
\definecolor{colR}{rgb}{.932,.172,.172}
\definecolor{colB}{rgb}{.255,.41,.884}
\definecolor{colG}{rgb}{0,0.7,0}
\tikzstyle{edge}=[line width=1.5pt]
\tikzstyle{dedge}=[edge,dashed,gray]
\tikzstyle{redge}=[edge,dashed]
\tikzstyle{bedge}=[edge]
\tikzstyle{gedge}=[edge,colG]
\tikzstyle{lnode}=[circle,white,draw, fill=black,inner sep=1pt, font=\scriptsize]
\tikzstyle{hollow}=[circle,gray,draw, thick, fill=white,inner sep=0pt, minimum size=4pt]
\tikzstyle{vertex}=[fill=black,circle,inner sep=0pt, minimum size=4pt]
\title{Algebraic connectivity in normed spaces}
\author{James Cruickshank}
\email{james.cruickshank@universityofgalway.ie}
\address{School of Mathematical and Statistical Sciences,  
University of Galway, Ireland.}
\author{Sean Dewar}
\email{sean.dewar@bristol.ac.uk}
\address{School of Mathematics, University of Bristol.}
\author{Derek Kitson}
\email{derek.kitson@mic.ul.ie}
\address{Department of Mathematics and Computer Studies, Mary Immaculate College, Ireland.}
\begin{document}
\date{}

\begin{abstract}
	The algebraic connectivity of a graph $G$ in a finite dimensional real normed linear space $X$ is a geometric counterpart to the Fiedler number of the graph and can be regarded as a measure of the rigidity of the graph in $X$. We analyse the behaviour of the algebraic connectivity of $G$ in $X$ with respect to graph decomposition, vertex deletion and isometric isomorphism, and provide a general bound expressed in terms of the geometry of $X$ and the Fiedler number of the graph. Particular focus is given to the space $\ell_\infty^d$ where we present explicit formulae and calculations as well as upper and lower bounds. As a key tool, we show that the monochrome subgraphs of a complete framework in $\ell_\infty^d$ are odd-hole-free. Connections to redundant rigidity are also presented.
\end{abstract}

\maketitle

\newcommand{\jim}[1]{\textcolor{blue}{[Jim: #1]}}

{\small \noindent \textbf{MSC2020:}  52C25, 05C50, 05C22, 46B20}

{\small \noindent \textbf{Keywords:} algebraic connectivity, rigidity eigenvalue, matrix-weighted graph, Laplacian matrix}

\section{Introduction}\label{sec:intro}
The {\em algebraic connectivity} (or {\em Fiedler number}) of a finite simple graph $G=(V,E)$, denoted $a(G)$, is the second smallest eigenvalue of its Laplacian matrix $L(G)$. This quantity is non-negative and bounded above by the vertex connectivity of the graph. Moreover, it is positive if and only if the graph is connected.  The algebraic connectivity of graphs is well-studied and arises in numerous contexts, such as the study of isoperimetric numbers and expanders. We refer the reader to the paper of Fiedler (\cite{Fiedler73})  and to the survey articles \cite{deAbreu,mohar91} for further properties and applications. 

The {\em $d$-dimensional algebraic connectivity} of a graph, introduced by Jord\'{a}n and Tanigawa (\cite{JT22}), is a higher-dimensional analogue of algebraic connectivity. It is a non-negative number which is positive if and only if the graph is generically rigid in $d$-dimensional Euclidean space. The case $d=1$ coincides with the usual notion of algebraic connectivity.  To define the $d$-dimensional algebraic connectivity of a graph $G$ we first consider bar-joint frameworks $(G,p)$ in $\mathbb{R}^d$ obtained by assigning points $p_v$ in $\mathbb{R}^d$ to the vertices of the graph $G$. Each bar-joint framework $(G,p)$ gives rise to a  {\em  framework Laplacian matrix} $L(G,p)$ (also known as the {\em stiffness matrix}) which is positive semidefinite.  The $\binom{d+1}{2}+1$ smallest eigenvalue of $L(G,p)$ (known as the {\em rigidity eigenvalue} or {\em worst case rigidity index}) is positive if and only if the bar-joint framework $(G,p)$ is infinitesimally rigid. The $d$-dimensional algebraic connectivity of $G$ is the supremum of these rigidity eigenvalues, where the supremum is taken over all possible bar-joint frameworks $(G,p)$ in $\mathbb{R}^d$. 

In this article, we consider  framework Laplacian matrices, rigidity eigenvalues and $d$-dimensional algebraic connectivity in a broader context; replacing $d$-dimensional Euclidean space with a general finite dimensional real normed linear space $X$. 
The {\em framework Laplacian matrix} $L(G,p)$ for a framework $(G,p)$ in $X$ derives naturally from a {\em rigidity matrix} $R(G,p)$ and can be viewed as the Laplacian matrix for a matrix-weighted graph, whereby each edge of the graph is assigned a positive semidefinite $d\times d$ matrix. 
The {\em rigidity eigenvalue} for a framework in $X$ is the $k(X)+1$ smallest eigenvalue of the framework Laplacian matrix, where the value $k(X)$ is dependent on the isometry group of the normed space $X$. In many cases of interest (such as $\ell_p$ spaces with $p\not=2$) the value $k(X)$ is simply the dimension of $X$. 

The framework Laplacian matrices and rigidity eigenvalues considered here fit neatly into the cellular sheaf formalism developed in recent work of Hansen (\cite{hansen}) and offer a rich source of examples (see \Cref{s:weightedgraphs}). Although beyond the scope of this paper, there are evident connections to isoperimetric inequalities and mixing lemmas for matrix-weighted expander graphs. Indeed, interest in $d$-dimensional algebraic connectivity has been largely motivated by applications to rigidity percolation for random graphs and rigidity expanders (\cite{JT22,LNPR23,lnpr25,pmga22,pmga25}).
 The study of rigidity eigenvalues for bar-joint frameworks is interesting in its own right and arises in multi-agent formation control (\cite{zelazo,zh09}).
 The role of alternative metrics in multi-agent formation control has received some attention (e.g. \cite{bcs19,CAPR}) and so  the rigidity eigenvalues considered here may also have relevance in these application domains. 

In \Cref{s:preliminaries}, we provide some necessary background on the algebraic connectivity of graphs and on the rigidity of frameworks in normed spaces. In \Cref{s:algcon}, we introduce the notion of a framework Laplacian matrix $L(G,p)$ for a framework in a normed space $X$. We also define the algebraic connectivity of a graph $G$ in $X$, denoted $a(G,X)$, and prove several properties. Among the results, we obtain a general upper bound for $a(G,X)$ expressed in terms of the  algebraic connectivity $a(G)$ (\Cref{t:algconn}) and compute this bound for all  $\ell_p^d$ spaces with $p\not=2$ (\Cref{c:lp}).  In \Cref{s:polyhedral}, we consider the class of polyhedral normed spaces and in particular the space $\ell_\infty^d$. We first prove a structural result for the induced monochrome subgraphs of a complete framework, showing that they are necessarily odd-hole-free (\Cref{t:oddhole}). This result, which is of independent interest,  simplifies later calculations of $a(K_n,\ell_\infty^d)$. The main result is an explicit formula for $a(G,\ell_\infty^d)$ (\Cref{t:l infinity decomp}) which we use to derive upper and lower bounds and to compute the algebraic connectivity of complete graphs in $\ell_\infty^d$. In \Cref{s:rr}, we highlight some connections to  vertex-redundant rigidity and edge-redundant rigidity.

\section{Preliminaries}
\label{s:preliminaries}
 All graphs throughout are assumed to be both finite and simple.
 Given a pair of vertices $v,w\in V$ in a graph $G=(V,E)$, we write $v\sim w$ if the vertices $v$ and $w$ are adjacent in $G$. The degree of a vertex $v$ in $G$ will be denoted $\deg_G(v)$ or simply $\deg(v)$. For $n\in \mathbb{N}$,  let $[n]:=\{1,\ldots,n\}$.
The standard basis vectors for $\mathbb{R}^n$ will be denoted $b_1,\ldots,b_n$.
The orthogonal complement of a subspace $Y$ in $\mathbb{R}^n$ will be denoted $Y^\perp$. The Euclidean norm on $\mathbb{R}^n$ is denoted $\|\cdot\|_2$.
 
 The set of all $n\times n$ real matrices will be denoted $M_n(\mathbb{R})$. The Kronecker product of two matrices $A$ and $B$ is denoted $A\otimes B$.
The eigenvalues of a real symmetric matrix $A\in M_n(\mathbb{R})$ will be denoted  $\lambda_1 (A) \leq \cdots \leq \lambda_n(A)$, where each eigenvalue is repeated according to its multiplicity. The spectral norm for
an $n\times m$ matrix $A$ will be denoted $\|A\|_{2}$, 
$$\|A\|_{2} := \sup_{x\in\mathbb{R}^m,\,\|x\|_2=1} \|Ax\|_2.$$
 The following results will be required. See for example \cite[\S III]{bhatia} and \cite{ostrowski59} for further details.

\begin{theorem}[Courant-Fischer Theorem]
\label{t:courant}
Let $A$ be an $n\times n$ real symmetric matrix with linearly independent eigenvectors $y_1,\ldots,y_n\in\mathbb{R}^n$ where, for each $j\in[n]$, $y_j$ is an eigenvector for the eigenvalue  $\lambda_j(A)$.
Set $Y_0:=\{0\}$ and, for each $k\in[n]$, denote by $Y_k$ the linear span of $y_1,\ldots,y_k$ in $\mathbb{R}^n$. 
 Then, for each $j\in[n]$,
$$\lambda_j(A) = \min\,\{x^\top Ax:x\in Y_{j-1}^\perp,\,\|x\|_2=1\}.$$
\end{theorem}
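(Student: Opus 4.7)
The plan is to reduce to the case of an orthonormal eigenbasis and then exploit the spectral decomposition to rewrite the Rayleigh quotient as a convex combination of the eigenvalues, so that the constraint $x \in Y_{j-1}^\perp$ kills the first $j-1$ contributions and forces the minimum to be $\lambda_j$.

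First, I would observe that because $A$ is symmetric and the eigenvectors $y_1,\ldots,y_n$ are linearly independent, one may assume they form an orthonormal basis of $\mathbb{R}^n$. Eigenvectors corresponding to distinct eigenvalues are automatically orthogonal, and within each eigenspace one can apply Gram--Schmidt; this replacement does not alter the flag $Y_0 \subset Y_1 \subset \cdots \subset Y_n$, since Gram--Schmidt preserves the nested spans. So from here on I would treat $\{y_1,\ldots,y_n\}$ as orthonormal.

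Next, for arbitrary $x \in Y_{j-1}^\perp$ with $\|x\|_2 = 1$, write $x = \sum_{i=1}^n c_i y_i$ with $c_i = y_i^\top x$. The orthogonality condition $x \in Y_{j-1}^\perp$ gives $c_1 = \cdots = c_{j-1} = 0$, and $\|x\|_2 = 1$ gives $\sum_{i=j}^n c_i^2 = 1$. Using $A y_i = \lambda_i(A) y_i$ together with orthonormality, a direct computation yields
\[
x^\top A x \;=\; \sum_{i=j}^n \lambda_i(A)\, c_i^2 \;\geq\; \lambda_j(A) \sum_{i=j}^n c_i^2 \;=\; \lambda_j(A),
\]
where the inequality uses $\lambda_i(A) \geq \lambda_j(A)$ for $i \geq j$. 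This establishes the lower bound.

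Finally, to see that the infimum is attained (and is indeed a minimum), I would test $x = y_j$: it lies in $Y_{j-1}^\perp$, has unit norm, and satisfies $y_j^\top A y_j = \lambda_j(A)$. There is no serious obstacle here; the only subtlety worth flagging is the initial reduction to an orthonormal eigenbasis, which must be justified carefully when eigenvalues have multiplicities (so that the flag $\{Y_k\}$ is the same as that built from the orthonormalised eigenvectors).
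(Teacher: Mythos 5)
Your argument is correct: the reduction to an orthonormal eigenbasis via Gram--Schmidt within eigenspaces preserves the flag $Y_0\subset\cdots\subset Y_n$ and keeps each new vector an eigenvector for the same eigenvalue, and the Rayleigh-quotient expansion together with the test vector $y_j$ then gives both the lower bound and its attainment. The paper does not prove this statement itself --- it records the Courant--Fischer theorem as a known result with a reference to Bhatia --- and your proof is the standard textbook argument, so there is nothing substantive to compare.
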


\begin{theorem}[Weyl's Perturbation Theorem]
\label{t:weyl}
Let $A$ and $B$ be $n\times n$ real symmetric matrices. 
 Then, for each $j\in[n]$,
$$|\lambda_j(A)-\lambda_j(B)|\leq \|A-B\|_2.$$
\end{theorem}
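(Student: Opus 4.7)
The plan is to bound $\lambda_j(A)-\lambda_j(B)$ in one direction (the other follows by symmetry, swapping $A$ and $B$) by exhibiting a single unit vector $x$ that is well-placed with respect to both spectral decompositions simultaneously.

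More precisely, let $y_1,\ldots,y_n$ be orthonormal eigenvectors for $A$ with $Ay_i=\lambda_i(A)y_i$, and $z_1,\ldots,z_n$ be orthonormal eigenvectors for $B$ with $Bz_i=\lambda_i(B)z_i$. Let $Y_{j-1}=\mathrm{span}(y_1,\ldots,y_{j-1})$ and $Z_j=\mathrm{span}(z_1,\ldots,z_j)$. A dimension count gives
\[
\dim\bigl(Y_{j-1}^\perp\cap Z_j\bigr)\ \geq\ (n-j+1)+j-n\ =\ 1,
\]
so one may pick a unit vector $x\in Y_{j-1}^\perp\cap Z_j$. The first membership, together with \Cref{t:courant} applied to $A$, forces $x^\top Ax\geq \lambda_j(A)$. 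The second membership lets us write $x=\sum_{i=1}^j c_i z_i$ with $\sum c_i^2=1$, whence
\[
x^\top Bx\ =\ \sum_{i=1}^j \lambda_i(B)\,c_i^2\ \leq\ \lambda_j(B).
\]

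Combining these two inequalities with the estimate $|x^\top(A-B)x|\leq\|A-B\|_2$, which is immediate from the definition of the spectral norm since $x$ is a unit vector, yields
\[
\lambda_j(A)\ \leq\ x^\top Ax\ =\ x^\top Bx+x^\top(A-B)x\ \leq\ \lambda_j(B)+\|A-B\|_2.
\]
Interchanging the roles of $A$ and $B$ gives the reverse inequality, and together they establish $|\lambda_j(A)-\lambda_j(B)|\leq\|A-B\|_2$.

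There is no serious obstacle here: the only nontrivial ingredient is the observation that the given form of \Cref{t:courant} can be exploited by locating a vector in the intersection of an $A$-defined co-subspace and a $B$-defined subspace, which is precisely what the dimension inequality above guarantees. The rest is an application of the spectral norm bound on the quadratic form of the perturbation $A-B$.
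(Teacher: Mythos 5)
Your argument is correct. Note, however, that the paper does not prove this statement at all: it is quoted as background (with a pointer to Bhatia's \emph{Matrix Analysis} and Ostrowski's paper), so there is no in-paper proof to compare against. What you have written is the standard min--max proof of Weyl's perturbation inequality: the dimension count $\dim(Y_{j-1}^\perp\cap Z_j)\geq 1$ is valid, the two memberships of the chosen unit vector $x$ give $x^\top Ax\geq\lambda_j(A)$ (via \Cref{t:courant}) and $x^\top Bx\leq\lambda_j(B)$ respectively, and the bound $|x^\top(A-B)x|\leq\|A-B\|_2$ follows from Cauchy--Schwarz together with the definition of the spectral norm. Combining and symmetrising in $A$ and $B$ completes the proof. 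One cosmetic remark: the paper's statement of \Cref{t:courant} only assumes linearly independent eigenvectors, whereas your argument (correctly) works with an orthonormal eigenbasis, which always exists for a real symmetric matrix; this is the right choice and is implicitly what the Courant--Fischer formula requires. The textbook route via Weyl's monotonicity principle ($\lambda_j(B+C)\leq\lambda_j(B)+\lambda_n(C)$ applied with $C=A-B$) is essentially the same argument with one extra layer of packaging, so nothing is lost by your direct approach.
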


\begin{theorem}[Ostrowski's Theorem]
\label{t:ostrowski}
Let $A$  be an $n\times n$ real symmetric matrix and let $S$ be an invertible $n\times n$ matrix. 
 Then, for each $j\in[n]$,
$$\lambda_j(A) = \theta_j \lambda_j(S^\top AS),$$
where $\lambda_1(S^\top S)\leq \theta_j \leq \lambda_n(S^\top S)$.
\end{theorem}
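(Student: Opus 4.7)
The plan is to derive the result from the Courant-Fischer min-max characterisation (\Cref{t:courant}) via a change of variables. Setting $y = Sx$, the starting observation is the algebraic factorisation
\[
\frac{x^\top S^\top A S x}{x^\top x} \;=\; \frac{y^\top A y}{y^\top y}\cdot \frac{x^\top S^\top S x}{x^\top x},
\]
where the first factor is a Rayleigh quotient for $A$ and the second is a Rayleigh quotient for the positive definite matrix $S^\top S$; the latter accordingly lies in $[\lambda_1(S^\top S),\lambda_n(S^\top S)]$.

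Since $S$ is invertible, the map $V \mapsto S(V)$ is a bijection on the set of $j$-dimensional subspaces of $\mathbb{R}^n$. Applying Courant-Fischer to $\lambda_j(S^\top A S)$, the min-max problem matches the one defining $\lambda_j(A)$, but evaluated at $y = Sx$ and scaled pointwise by $\frac{x^\top S^\top S x}{x^\top x}$. Extracting the desired $\theta_j$ amounts to showing that this pointwise scalar collapses to a single number in $[\lambda_1(S^\top S), \lambda_n(S^\top S)]$ after taking the min and max; in the degenerate case $\lambda_j(A)=0$, one notes that $A$ and $S^\top A S$ have the same signature (since $S$ is invertible), so $\lambda_j(S^\top A S)=0$ as well and any $\theta_j$ in the interval suffices.

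The main obstacle, and the delicate part of the argument, is that the pointwise bound on the scaling factor does not translate transparently to a bound on $\theta_j$ when $y^\top A y$ changes sign, since multiplying a negative quantity by a larger positive scalar decreases the product. I would resolve this either by splitting the argument according to $\sgn(\lambda_j(A))$ and invoking the dual max-min form of Courant-Fischer in the negative case, or more smoothly by a continuity argument. Via the polar decomposition $S = UT$ with $U$ orthogonal and $T = (S^\top S)^{1/2}$ positive definite, one interpolates along the invertible path $S_t = U T^t$ from $I$ to $S$; the map $t \mapsto \lambda_j(S_t^\top A S_t)$ is continuous, equals $\lambda_j(A)$ at $t=0$ and $\lambda_j(S^\top A S)$ at $t=1$. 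The intermediate value theorem combined with the pointwise Rayleigh bound then forces $\theta_j$ into the required interval.
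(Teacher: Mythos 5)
The paper offers no proof of this statement --- it is quoted as a classical result with a pointer to Bhatia and to Ostrowski's original article --- so your argument has to stand on its own. Before assessing it, note that your Rayleigh-quotient factorisation, run through Courant--Fischer over the family of subspaces $W \mapsto S^{-1}W$, proves the \emph{standard} form of Ostrowski's theorem: $\lambda_j(S^\top A S) = \theta_j\,\lambda_j(A)$ with $\theta_j \in [\lambda_1(S^\top S),\lambda_n(S^\top S)]$. That is not what is printed above, which asserts $\lambda_j(A) = \theta_j\,\lambda_j(S^\top A S)$ with $\theta_j$ in the \emph{same} interval. As printed the statement is false: take $A = I$ and $S = 2I$, so that $\theta_j = 1/4$ while the asserted interval is $\{4\}$; the correct interval for the printed form is $[\lambda_n(S^\top S)^{-1},\lambda_1(S^\top S)^{-1}]$. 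Your proposal conflates these two reciprocal quantities --- the scalar you ``extract'' multiplies $\lambda_j(A)$, not $\lambda_j(S^\top A S)$ --- so you must either prove the standard form and record that the theorem as stated has $A$ and $S^\top AS$ interchanged, or correct the interval. This is worth flagging because the same normalisation feeds into the way \Cref{p:isometric-eigenvalue} is derived.

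On the substance of proving the correct form: the obstacle you identify is real, and your first proposed resolution is the right one (and is essentially the textbook proof). If $\lambda_j(A)\ge 0$, testing the min--max for $\lambda_j(S^\top AS)$ on $W=S^{-1}V$, with $V$ spanned by eigenvectors of $A$ for $\lambda_1(A),\dots,\lambda_j(A)$, bounds the product of the two Rayleigh quotients above by $\lambda_n(S^\top S)\lambda_j(A)$ for every sign of $y^\top Ay$ (a negative first factor only helps, since the target is nonnegative); the dual max--min form on $S^{-1}V'$, with $V'$ spanned by the eigenvectors for $\lambda_j(A),\dots,\lambda_n(A)$, gives the lower bound because there the first factor is nonnegative throughout. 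The case $\lambda_j(A)<0$ follows by applying this to $-A$, and $\lambda_j(A)=0$ by Sylvester's law of inertia as you say. Your second, ``smoother'' resolution does not work as described: continuity of $t\mapsto\lambda_j(S_t^\top AS_t)$ together with the intermediate value theorem controls which values the path passes through, but says nothing about the \emph{ratio} of the endpoints, which is what $\theta_j$ is; to make interpolation work you would need a differential inequality for $\tfrac{d}{dt}\log\lvert\lambda_j(S_t^\top AS_t)\rvert$, a genuinely different argument. Commit to the sign-splitting route.
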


\subsection{Algebraic connectivity}
The {\em Laplacian matrix} of a graph $G=(V,E)$ is a $|V|\times|V|$ real symmetric matrix, denoted  $L(G)$, with rows and columns indexed by $V$. The $(v,w)$-entry for a pair of vertices $v,w\in V$ is, 
$$l_{v,w} := \left\{\begin{array}{cl}
\deg(v) & \mbox{if }v=w,\\
-1 & \mbox{if }v\sim w,\\
0 & \mbox{otherwise}.
\end{array}\right.$$ 
Given an orientation on the edges of $G$, denote by $s(e)$ and $r(e)$ the {\em source} and {\em range} of a directed edge $e=(s(e),r(e))$.
Following \cite{brualdi-ryser}, the {\em oriented incidence matrix} $C(G)$ is a $|E|\times |V|$ matrix with rows indexed by $E$ and columns indexed by $V$. The $(e,v)$-entry for a directed edge $e\in E$ and a vertex $v\in V$ is,
$$c_{e,v} := \left\{\begin{array}{cl}
1 & \mbox{if }s(e)=v,\\
-1 & \mbox{if }r(e)=v,\\
0 & \mbox{otherwise}.
\end{array}\right.$$
The Laplacian matrix satisfies $L(G)=C(G)^\top C(G)$ and is hence a positive semidefinite matrix. In particular,  the eigenvalues of $L(G)$ are non-negative. Note that $L(G)z=0$ where $z=[1\,\cdots\, 1]^\top$ is the all-ones vector in $\mathbb{R}^{|V|}$ and so the smallest eigenvalue of $L(G)$ is always $0$. The second smallest eigenvalue $\lambda_2(L(G))$ is called the {\em algebraic connectivity} of $G$ and is denoted $a(G)$. 

\begin{lemma}[\cite{Fiedler73}]
\label{l:fiedler}
Let $G=(V,E)$ be a  graph with vertex connectivity $v(G)$ and edge connectivity $e(G)$.
\begin{enumerate}
    \item $a(G)=0$ if and only if $v(G)=0$. 
    \item If $G$ is not a complete graph then $a(G)\leq v(G)$.
    \item If $H_1,\ldots,H_k$ are edge-disjoint spanning subgraphs of $G$ then $\sum_{i\in[k]}a(H_i)\leq a(G)$.
   \item $a(G)\geq 2e(G)(1-\cos(\pi/n))$.
\end{enumerate}
\end{lemma}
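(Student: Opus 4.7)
My plan is to prove all four parts from the single identity $x^\top L(G) x = \sum_{uv\in E}(x_u-x_v)^2$ that comes from the factorisation $L(G) = C(G)^\top C(G)$, combined with the variational characterisation of eigenvalues (\Cref{t:courant}).

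For (i), since the Rayleigh form vanishes only on vectors that are constant on each connected component, the kernel of $L(G)$ has dimension equal to the number of components of $G$. Hence $a(G) = \lambda_2(L(G)) = 0$ iff $G$ has more than one component, and by the standard convention this is equivalent to $v(G)=0$. For (ii), I would fix a vertex cut $S$ of size $v(G)$, pick two components $A, B$ of $G-S$, and test the Rayleigh quotient against the vector $x$ defined by $x_v=|B|$ on $A$, $x_v=-|A|$ on $B$, and $0$ elsewhere. This vector is orthogonal to $\mathbf{1}$; since there are no $A$-$B$ edges, the sum only receives contributions from edges incident to $S$, and a direct count gives $x^\top L(G) x/\|x\|_2^2 \leq |S|$, so $a(G)\leq v(G)$ by \Cref{t:courant}.

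For (iii), the matrix $L(G)-\sum_i L(H_i)$ is itself a graph Laplacian (on the edges of $G$ outside $\bigcup_i H_i$) and hence positive semidefinite. Each $H_i$ is spanning, so $\mathbf{1}\in\ker L(H_i)$, and applying \Cref{t:courant} over the common admissible set $\{x\perp\mathbf{1},\,\|x\|_2=1\}$ together with additivity of the Rayleigh form yields $a(G)\geq\sum_i a(H_i)$.

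For (iv), I would take a unit Fiedler vector $f\perp\mathbf{1}$ and permute coordinates so that $f_1\leq\cdots\leq f_n$. Writing $d_i:=f_{i+1}-f_i\geq 0$, I would observe that for each edge $uv$ with $u<v$, $(f_v-f_u)^2=\bigl(\sum_{i=u}^{v-1}d_i\bigr)^2\geq \sum_{i=u}^{v-1}d_i^2$ since all cross terms are non-negative. Reindexing the double sum,
$$a(G)=\sum_{uv\in E}(f_u-f_v)^2 \geq \sum_{i=1}^{n-1} c_i d_i^2,$$
where $c_i$ counts the edges in the cut $(\{1,\ldots,i\},\{i+1,\ldots,n\})$; since each such partition is an edge cut, $c_i\geq e(G)$. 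Finally $\sum_i d_i^2$ is the Rayleigh quotient of $f$ against the path Laplacian $L(P_n)$, so applying \Cref{t:courant} to $P_n$ (whose algebraic connectivity is the well-known value $2(1-\cos(\pi/n))$) gives $\sum_i d_i^2\geq a(P_n)$, completing the bound. The only real subtlety is part (iv): the reordering step is essential, because it forces $d_i\geq 0$ and allows the non-negative cross terms in $(\sum d_i)^2$ to be safely discarded without reversing the inequality.
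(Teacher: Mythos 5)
The paper does not prove this lemma; it is quoted directly from Fiedler's 1973 paper, so there is no internal proof to compare against. Your argument is correct and in fact reproduces Fiedler's original proofs essentially verbatim: the kernel-dimension characterisation for (i), the three-block test vector $(|B|,-|A|,0)$ for (ii), superadditivity of the Rayleigh form over a common admissible set for (iii), and the sorted-Fiedler-vector comparison with the path Laplacian for (iv). All steps check out, including the two points that actually need care in (iv) --- discarding the non-negative cross terms after sorting, and the fact that each threshold cut $(\{1,\dots,i\},\{i+1,\dots,n\})$ contains at least $e(G)$ edges.
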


Recall that a {\em cut vertex} in a graph $G=(V,E)$ is a vertex $v\in V$ whose removal produces a disconnected graph.

\begin{lemma}[{\cite[Corollary 2.1]{Kirkland}}]\label{l:kirkland}
    Let $G=(V,E)$ be a connected graph with a cut vertex $v$.
    Then $a(G) \leq 1$, with equality if and only if $v$ is adjacent to every other vertex of $G$.
\end{lemma}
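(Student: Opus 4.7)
My plan is to apply the Courant--Fischer theorem (\Cref{t:courant}) with a test vector tailored to the cut vertex. Since $v$ is a cut vertex, I can partition $V\setminus\{v\} = A\sqcup B$ where $A$ is one component of $G-v$ and $B$ is the union of the remaining components; set $a=|A|$ and $b=|B|$, both positive. Define $x\in\mathbb{R}^V$ by $x_v=0$, $x_u=\alpha$ for $u\in A$, and $x_u=\beta$ for $u\in B$, with $\alpha,\beta$ chosen so that $x\perp \mathbf{1}$ and $\|x\|_2=1$; this forces $\beta = -a\alpha/b$ and $\alpha^2 = b/(a(a+b))$, hence $\beta^2 = a/(b(a+b))$.

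The key observation is that edges inside $A$ and inside $B$ contribute $0$ to the Dirichlet form, so
$$x^\top L(G)x = \sum_{uw\in E}(x_u-x_w)^2 = \deg_A(v)\alpha^2 + \deg_B(v)\beta^2 = \frac{\deg_A(v)\,b^2 + \deg_B(v)\,a^2}{ab(a+b)}.$$
Using $\deg_A(v)\leq a$ and $\deg_B(v)\leq b$, the numerator is at most $ab^2+a^2b = ab(a+b)$, which gives $x^\top L(G)x\leq 1$. Since $x\perp\mathbf{1}$ and $\ker L(G)$ contains $\mathbf{1}$, \Cref{t:courant} yields $a(G) = \lambda_2(L(G)) \leq 1$.

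For the equality characterisation, first suppose $v$ is not adjacent to some $w\in V\setminus\{v\}$. I would then choose $A$ to be the component of $G-v$ containing $w$, so that $\deg_A(v)\leq a-1$, and the same computation gives
$$a(G) \leq \frac{(a-1)b^2 + ba^2}{ab(a+b)} = 1 - \frac{b}{a(a+b)} < 1.$$
Conversely, if $v$ is adjacent to every other vertex, I would invoke \Cref{l:fiedler}(iii): decompose the edges of $G$ into the spanning star $K_{1,n-1}$ centred at $v$ and the spanning subgraph $H$ whose edges are those of $G-v$ (with $v$ left isolated). The star has Laplacian spectrum $\{0,1^{n-2},n\}$, so $a(K_{1,n-1})=1$, while $H$ is disconnected (because $v$ is a cut vertex and so $G-v$ is disconnected), giving $a(H)=0$. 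Additivity then yields $a(G) \geq 1 + 0 = 1$, which combined with the upper bound forces equality.

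The only subtle step is verifying that the additive decomposition argument applies, i.e.\ that $H$ really is disconnected; this uses in an essential way that $v$ is a cut vertex (rather than just a universal vertex, which alone would not force $a(G)=1$). Otherwise the argument is a clean Courant--Fischer calculation, and the equality case splits naturally into the two directions handled above.
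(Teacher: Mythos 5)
Your proof is correct, but note that the paper does not actually prove this statement: it is quoted verbatim from the literature (Kirkland et al., cited as \cite{Kirkland}), so there is no in-paper argument to compare against. What you have produced is a clean, self-contained replacement. The upper bound via the Courant--Fischer theorem (\Cref{t:courant}) with the two-level test vector supported on the components of $G-v$ is sound: the normalisation $\alpha^2=b/(a(a+b))$, $\beta^2=a/(b(a+b))$ is right, there are no $A$--$B$ edges because $A$ is a full component of $G-v$, and the bound $\deg_A(v)b^2+\deg_B(v)a^2\leq ab(a+b)$ gives $a(G)\leq 1$. The strict inequality $a(G)\leq 1-\tfrac{b}{a(a+b)}<1$ when $v$ misses a vertex of $A$ correctly handles one direction of the equality case, and the converse via \Cref{l:fiedler}(iii) applied to the spanning star at $v$ (which has $a(K_{1,n-1})=1$ for $n\geq 3$) together with the spanning subgraph on the edges of $G-v$ is also fine --- though your closing worry is slightly misplaced: the additivity step only needs $a(H)\geq 0$, which is automatic from positive semidefiniteness, so the disconnectedness of $H$ is not essential there; the cut-vertex hypothesis is what makes the \emph{upper} bound work. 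One tiny point worth making explicit is that $B\neq\emptyset$ in the strict-inequality step, which holds precisely because $v$ is a cut vertex and so $G-v$ has at least two components.
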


\begin{example}
\label{ex:fiedler2}
The following formulae are presented in \cite{Fiedler73}.
\begin{enumerate}
    \item $a(P_n) = 2(1-\cos(\pi/n))$ where $P_n$ is the path graph on $n$ vertices, $n\geq 2$.
    \item $a(C_n) = 2(1-\cos(2\pi/n))$ where $C_n$ is the cycle graph on $n$ vertices, $n\geq 3$.
    \item $a(K_n)=n$ where $K_n$ is the complete graph on $n$ vertices, $n\geq 2$.
\end{enumerate}
\end{example}

\subsection{Normed spaces}
Let $X=(\mathbb{R}^d,\|\cdot\|_X)$ be a $d$-dimensional real normed linear space with unit sphere $S_X=\{x\in \mathbb{R}^d:\|x\|_X=1\}$. 
A {\em support functional} for a point $x_0\in S_X$ is a linear functional $\varphi:\mathbb{R}^d\to \mathbb{R}$ such that $\varphi\left(x_0\right)=1$ and $\|\varphi\|_X^*=1$.
Here $\|\cdot\|_X^*$ denotes the dual norm,
$$\|\varphi\|_X^*:=\sup_{\|x\|_X=1}|\varphi(x)|.$$
The norm on $X$ is {\em smooth} at a point $x_0\in S_X$ if there exists exactly one support functional for $x_0$.
In this case, the unique support functional for $x_0$ is denoted $\varphi_{x_0}$ and satisfies,
$$\varphi_{x_0}(x) = \lim_{t\to0}\, \frac{1}{t}\left(\|x_0 + tx\|_X-\|x_0\|_X\right),\quad \forall\,x\in \mathbb{R}^d.$$
The support functional $\varphi_{x_0}$ will frequently be represented by its standard matrix which will be denoted by the same symbol: $$\varphi_{x_0}=[\varphi_{x_0}(b_1)\,\cdots\, \varphi_{x_0}(b_d)]\in\mathbb{R}^{1\times d},$$
where $b_1,\ldots,b_d$ is the standard basis for $\mathbb{R}^d$.

\begin{lemma}
\label{l:smooth}
    Let $X=(\mathbb{R}^d,\|\cdot\|_X)$ and $Y=(\mathbb{R}^d,\|\cdot\|_Y)$ and let $x_0$ be a smooth point in the unit sphere of $X$.
    If $\Psi:X\to Y$ is an isometric isomorphism then $y_0:=\Psi(x_0)$
    is a smooth point in the unit sphere of $Y$ and $\varphi_{x_0} = \varphi_{y_0}\circ \Psi$.
\end{lemma}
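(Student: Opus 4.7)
The plan is to show that composing a support functional at $y_0$ with $\Psi$ yields a support functional at $x_0$, and then to use smoothness of $X$ at $x_0$ to force uniqueness on the $Y$ side.

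First, since $\Psi$ is an isometric isomorphism, $\|y_0\|_Y = \|\Psi(x_0)\|_Y = \|x_0\|_X = 1$, so $y_0$ lies on the unit sphere of $Y$. Now take any support functional $\psi: Y \to \mathbb{R}$ at $y_0$, and consider the linear functional $\psi \circ \Psi: X \to \mathbb{R}$. Then $(\psi \circ \Psi)(x_0) = \psi(y_0) = 1$. Moreover, using that $\Psi$ is a bijection preserving the norms,
\[
\|\psi \circ \Psi\|_X^* = \sup_{\|x\|_X = 1} |\psi(\Psi(x))| = \sup_{\|y\|_Y = 1} |\psi(y)| = \|\psi\|_Y^* = 1.
\]
Thus $\psi \circ \Psi$ is a support functional for $x_0$ in $X$.

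Since $x_0$ is smooth, the support functional at $x_0$ is unique, so $\psi \circ \Psi = \varphi_{x_0}$, and hence $\psi = \varphi_{x_0} \circ \Psi^{-1}$ is uniquely determined by $x_0$ and $\Psi$. This proves both that $y_0$ is smooth and that $\varphi_{y_0} = \varphi_{x_0} \circ \Psi^{-1}$, which rearranges to $\varphi_{x_0} = \varphi_{y_0} \circ \Psi$, as required.

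There is no real obstacle here; the only point requiring care is verifying that $\|\psi \circ \Psi\|_X^* = \|\psi\|_Y^*$, which is immediate from the fact that $\Psi$ maps the unit sphere of $X$ bijectively onto the unit sphere of $Y$.
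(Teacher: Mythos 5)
Your proof is correct and follows essentially the same route as the paper: both arguments rest on the observation that composing a support functional at $y_0$ with $\Psi$ yields a support functional at $x_0$, and then invoke smoothness at $x_0$ to force uniqueness at $y_0$. Your write-up is slightly more explicit in verifying $\|\psi\circ\Psi\|_X^*=1$, a step the paper merely asserts.
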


\begin{proof}
   Suppose the point $y_0$ has two support functionals $\varphi^1$ and $\varphi^2$.
Note that the compositions $\varphi^1\circ \Psi$ and $\varphi^2\circ \Psi$
 are both support functionals for the point $x_0$. 
By uniqueness, $\varphi^1\circ\Psi=\varphi^2\circ\Psi$ and so $\varphi^1=\varphi^2$. Thus, the norm on $Y$ is smooth at $y_0$. Let $\varphi_{y_0}$ be the unique support functional for $y_0$. The composition $\varphi_{y_0}\circ \Psi$ is a support functional for $x_0$ and so, again by uniqueness, $\varphi_{x_0} = \varphi_{y_0}\circ \Psi$. 
\end{proof}

A {\em rigid motion} of the normed space $X=(\mathbb{R}^d,\|\cdot\|_X)$ is a family of continuous paths, $$\alpha_x:(-1,1)\to \mathbb{R}^d,\quad x\in \mathbb{R}^d,$$ with the following properties,
\begin{enumerate}[(i)]
\item $\alpha_x(0)=x$ for all $x\in \mathbb{R}^d$, 
\item $\alpha_x(t)$ is differentiable at $t=0$ for all $x\in \mathbb{R}^d$, and,
\item $\|\alpha_x(t)-\alpha_y(t)\|_X=\|x-y\|_X$ for all $t\in(-1,1)$ and all $x,y\in \mathbb{R}^d$.
\end{enumerate}

The induced affine map $\eta:\mathbb{R}^d\to \mathbb{R}^d$, $\eta(x) = \alpha_x'(0)$, is called an {\em infinitesimal rigid motion} of the normed space $X$. 
The collection of all infinitesimal rigid motions of $X$ is a real linear space under pointwise operations, denoted $\mathcal{T}(X)$. The dimension of $\mathcal{T}(X)$ is denoted $k(X)$. 

\begin{example}
For $1\leq q<\infty$ and $d\geq 2$, let $\ell_q^d:=(\mathbb{R}^d,\|\cdot\|_q)$ denote the $d$-dimensional $\ell_q$-space with norm $\|x\|_q := \left(\sum_{i\in [d]} |x_i|^q\right)^{\frac{1}{q}}$ for each $x=(x_1,\ldots,x_d)\in \mathbb{R}^d$. 
Also, let $\ell_\infty^d=(\mathbb{R}^d,\|\cdot\|_\infty)$ where $\|x\|_\infty := \max_{i\in [d]} |x_i|$ for each $x=(x_1,\ldots,x_d)\in \mathbb{R}^d$.

\begin{enumerate}[(a)]
\item The Euclidean norm $\|\cdot\|_2$ is smooth at every point in the unit sphere of $\ell^d_2$.
The unique support functional at a point $x=(x_1,\ldots,x_d)$ in the unit sphere has standard matrix,
$$\varphi_{x} = \left[x_1\, \cdots\,\, x_d\right].$$
The space of infinitesimal rigid motions $\mathcal{T}(\ell_2^d)$ has dimension $k(\ell_2^d)=\binom{d+1}{2}$.

\item If $q\in(1,\infty)$ and $q\not=2$ then the norm $\|\cdot\|_q$ is smooth at every point in the unit sphere of $\ell^d_q$.
The unique support functional at a point $x=(x_1,\ldots,x_d)$ in the unit sphere has standard matrix,
$$\varphi_{x} = \left[\sgn(x_1)|x_1|^{q-1} \cdots\,\, \sgn(x_d)|x_d|^{q-1}\right],$$
where $\sgn$ denotes the sign function.
The space of infinitesimal rigid motions $\mathcal{T}(\ell_q^d)$ has dimension $k(\ell_q^d)=d$.

\item The norm $\|\cdot\|_1$ is smooth at points $x=(x_1,\ldots,x_d)$ in the unit sphere of $\ell^d_1$ such that $x_i\not=0$ for each $i\in[d]$.
The unique support functional at a smooth point $x=(x_1,\ldots,x_d)$ in the unit sphere has standard matrix,
$$\varphi_{x} = \left[\sgn(x_1) \,\cdots\,\, \sgn(x_d)\right].$$
%where $\sgn$ denotes the sign function.
The space of infinitesimal rigid motions $\mathcal{T}(\ell_1^d)$ has dimension $k(\ell_1^d)=d$.

\item The norm $\|\cdot\|_\infty$ is smooth at points $x=(x_1,\ldots,x_d)$ in the unit sphere of $\ell^d_\infty$ such that $|x_i|\not= |x_j|$ for all pairs $i,j\in[d]$ with $i\not=j$.
The unique support functional at a smooth point $x=(x_1,\ldots,x_d)$ in the unit sphere has standard matrix,
$$\varphi_{x} = \left[0 \,\,\cdots\,\, \overset{i}{1}\,\, \cdots \,\,0\right],$$
where $\|x\|_\infty=|x_i|.$
The space of infinitesimal rigid motions $\mathcal{T}(\ell_\infty^d)$ has dimension $k(\ell_\infty^d)=d$.
\end{enumerate}
\end{example}

\subsection{Rigidity}
A {\em  framework} in a normed space $X=(\mathbb{R}^d,\|\cdot\|_X)$ is a pair $(G,p)$ consisting of a graph $G=(V,E)$ and a point $p\in (\mathbb{R}^d)^V$, $p=(p_v)_{v\in V}$, such that for each edge $vw\in E$,
\begin{enumerate}[(i)]
\item the components $p_v$ and $p_w$ are distinct, and,
\item the norm $\|\cdot\|_X$ is smooth at the normalised vector $\frac{p_v-p_w}{\|p_v-p_w\|_X}$.
\end{enumerate}
Note that the second condition is redundant in the case of smooth norms (and in particular for the Euclidean norm). For non-smooth norms, condition $(ii)$ is a relatively mild assumption as demonstrated by the following lemma.
The set of points $p\in (\mathbb{R}^d)^V$ for which the pair $(G,p)$ is a framework in  $X=(\mathbb{R}^d,\|\cdot\|_X)$ is denoted  $\mathcal{W}(G,X)$. 
 
\begin{lemma}{\cite[Lemma 4.1]{Dewar21}}
\label{l:wpdense}
Let $X=(\mathbb{R}^d,\|\cdot\|_X)$ be a normed linear space and let $G=(V,E)$ be a graph. Then
    $\mathcal{W}(G,X)$ is a dense subset of $(\mathbb{R}^d)^V$ and is conull with respect to Lebesgue measure.
\end{lemma}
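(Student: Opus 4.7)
The plan is to prove both density and conullness simultaneously by showing that the complement $(\mathbb{R}^d)^V \setminus \mathcal{W}(G,X)$ is Lebesgue null, since a null set in a Euclidean space has empty interior and hence its complement is automatically dense.

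The key analytic input is that the set $N \subseteq \mathbb{R}^d \setminus \{0\}$ of points at which $\|\cdot\|_X$ fails to be smooth in the support-functional sense is Lebesgue null. To see this, observe that the norm is convex and hence locally Lipschitz on $\mathbb{R}^d$, so Rademacher's theorem yields differentiability Lebesgue-almost everywhere. For a finite-dimensional convex function, differentiability at a point is equivalent to the subdifferential being a singleton; applied to the norm at a nonzero $x_0$, the subdifferential is exactly the set of support functionals of $x_0/\|x_0\|_X$. Therefore the non-smooth locus $N$ coincides with the null set of non-differentiability.

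Next, for each edge $e = vw \in E$ set $B_e := \{p \in (\mathbb{R}^d)^V : p_v = p_w\} \cup \{p \in (\mathbb{R}^d)^V : p_v - p_w \in N\}$, so that $(\mathbb{R}^d)^V \setminus \mathcal{W}(G,X) = \bigcup_{e \in E} B_e$. Consider the linear surjection $\pi_e : (\mathbb{R}^d)^V \to \mathbb{R}^d$, $p \mapsto p_v - p_w$. Using the orthogonal splitting $(\mathbb{R}^d)^V = \ker(\pi_e)^\perp \oplus \ker(\pi_e)$, Fubini's theorem shows that $\pi_e^{-1}(A)$ is null in $(\mathbb{R}^d)^V$ for every null set $A \subseteq \mathbb{R}^d$. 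Applying this to $A = \{0\} \cup N$ yields that each $B_e$ is null.

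The complement of $\mathcal{W}(G,X)$ is therefore a finite union of null sets and is itself null, giving conullness immediately; density then follows because the complement has empty interior in $\mathbb{R}^{d|V|}$. The only genuinely non-routine step is the first one: the appeal to Rademacher's theorem combined with the convex-analytic characterisation of differentiability, which converts the seemingly geometric condition of smoothness into a measure-theoretic statement. The remaining work is straightforward bookkeeping with null sets and their pullbacks along linear surjections.
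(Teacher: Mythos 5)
Your argument is correct and complete: Rademacher's theorem plus the standard convex-analysis fact that differentiability of a convex function at a point is equivalent to its subdifferential being a singleton (and the identification of the subdifferential of the norm at $x_0\neq 0$ with the support functionals of $x_0/\|x_0\|_X$) shows the non-smooth cone is null, and pulling back null sets along the linear surjections $p\mapsto p_v-p_w$ finishes the job. Note that the paper itself offers no proof here --- the lemma is imported verbatim from \cite[Lemma 4.1]{Dewar21} --- and your argument is essentially the standard one used in that reference, so there is nothing to flag.
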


A framework  $(G,p)$ in $X=(\mathbb{R}^d,\|\cdot\|_X)$ has {\em full affine span} if the set of components $\{p_v:v\in V\}$ affinely spans $\mathbb{R}^d$.
Each infinitesimal rigid motion $\eta\in\mathcal{T}(X)$ induces a vector $u\in (\mathbb{R}^d)^V$ with components $u_v = \eta(p_v)$ for each $v\in V$. The vector $u$ is called a {\em trivial infinitesimal flex} of $(G,p)$ and the set of all such vectors is denoted $\mathcal{T}^X(G,p)$, or simply $\mathcal{T}(G,p)$.

\begin{lemma}{\cite[Lemmas 25 \& 31]{KL20}}
\label{l:trivial}
    Let $(G,p)$ be a framework in $X=(\mathbb{R}^d,\|\cdot\|_X)$ with full affine span.
    Then $\mathcal{T}(G,p)$ is a subspace of $(\mathbb{R}^d)^V$ with dimension $k(X)$.
\end{lemma}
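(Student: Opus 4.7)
The plan is to realize $\mathcal{T}(G,p)$ as the image of a natural linear evaluation map from $\mathcal{T}(X)$ and then identify its dimension via rank-nullity, using the full affine span hypothesis to rule out a kernel.

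Concretely, I would introduce the evaluation map
\[
\Phi\colon \mathcal{T}(X) \to (\mathbb{R}^d)^V,\qquad \Phi(\eta) := \bigl(\eta(p_v)\bigr)_{v\in V}.
\]
Since $\mathcal{T}(X)$ is a real vector space under pointwise operations, $\Phi$ is linear, and by the definition of $\mathcal{T}(G,p)$ given just before the lemma, $\mathcal{T}(G,p)$ is precisely the image of $\Phi$. This immediately settles the first assertion: $\mathcal{T}(G,p)$ is a linear subspace of $(\mathbb{R}^d)^V$.

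To compute the dimension, I would invoke the rank-nullity theorem and show that $\Phi$ is injective. By construction in the excerpt, every $\eta \in \mathcal{T}(X)$ is an affine map $\mathbb{R}^d \to \mathbb{R}^d$. Suppose $\eta \in \ker \Phi$, so that $\eta(p_v) = 0$ for every $v \in V$. By the full affine span hypothesis, any $x \in \mathbb{R}^d$ may be written as $x = \sum_{v \in V} \lambda_v p_v$ with $\sum_{v \in V} \lambda_v = 1$. Applying $\eta$ and using that it is affine (so it commutes with affine combinations) gives
\[
\eta(x) = \sum_{v \in V} \lambda_v \,\eta(p_v) = 0.
\]
Hence $\eta \equiv 0$, so $\ker \Phi = \{0\}$. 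Combining this with $\dim \mathcal{T}(X) = k(X)$ gives $\dim \mathcal{T}(G,p) = k(X)$, as required.

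There is no real obstacle here; the only ingredient beyond elementary linear algebra is the affineness of every $\eta \in \mathcal{T}(X)$, which is supplied by the definition of an infinitesimal rigid motion stated in the excerpt. The full affine span assumption is used in exactly one place, namely to guarantee that an affine map vanishing on $\{p_v : v \in V\}$ vanishes everywhere.
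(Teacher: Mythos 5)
Your argument is correct: realising $\mathcal{T}(G,p)$ as the image of the linear evaluation map $\eta\mapsto(\eta(p_v))_{v\in V}$ on $\mathcal{T}(X)$, and using that an affine map vanishing on an affinely spanning set vanishes identically, does establish both the subspace claim and the dimension count. Note that the paper itself does not prove this lemma but imports it from \cite{KL20}; your proof is the standard self-contained argument one would expect there, with the full affine span hypothesis used in exactly the right place.
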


The {\em rigidity matrix} for a framework $(G,p)$ in $X$, denoted $R(G,p)$, is an $|E| \times d|V|$ matrix with rows indexed by $E$ and columns indexed by $V\times[d]$.
The $(e,(v,i))$-entry for an edge $e\in E$ and a pair $(v,i)\in V\times[d]$ is,
\begin{equation*}
    r_{e,(v,i)} := 
    \left\{  
    \begin{array}{cl}
        \varphi_{v,w}(b_i)    &\text{ if }e=vw, \\
        0     &\text{ otherwise},
    \end{array}    \right.
\end{equation*}
where $\varphi_{v,w}^X$ denotes the unique support functional for the point $\frac{p_v-p_w}{\|p_v-p_w\|_X}$.

Every trivial infinitesimal flex of $(G,p)$ lies in the kernel of the rigidity matrix $R(G,p)$. If there are no other vectors in the kernel of $R(G,p)$ then the framework is said to be {\em infinitesimally rigid}.
Note that if $(G,p)$ has full affine span then, by \Cref{l:trivial}, $(G,p)$ is infinitesimally rigid if and only if $\rank R(G,p)=d|V|-k(X)$.
 
 Let $\mathcal{R}(G,X)$ be the set of points $p\in\mathcal{W}(G,X)$ such that the framework $(G,p)$ is infinitesimally rigid. A graph $G$ is {\em rigid} in  $X$ if $\mathcal{R}(G,X)$ is non-empty.

\begin{lemma}{\cite[Corollary 3.8]{Dewar22}}\label{l:continuity}
    Let $G=(V,E)$ be a graph with $|V| \geq d+1$ and let $X$ be a $d$-dimensional normed space.
      Then   $\mathcal{R}(G,X)$ is an open subset of $\mathcal{W}(G,X)$.
\end{lemma}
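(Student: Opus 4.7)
My plan is to establish openness of $\mathcal{R}(G,X)$ in $\mathcal{W}(G,X)$ via a lower-semicontinuity argument. First I would reformulate infinitesimal rigidity as a single numerical equality: since the inclusion $\mathcal{T}(G,p)\subseteq\ker R(G,p)$ holds for every framework regardless of affine span, one has the universal inequality
\[
    f(p) \;:=\; \dim\mathcal{T}(G,p)+\rank R(G,p)\;\leq\; d|V|,
\]
and $(G,p)$ is infinitesimally rigid precisely when equality is attained. Thus $\mathcal{R}(G,X)=\{p\in\mathcal{W}(G,X):f(p)\geq d|V|\}$, so it suffices to prove that $f$ is lower semicontinuous on $\mathcal{W}(G,X)$: the superlevel set $\{f\geq d|V|\}$ is then open by definition of lower semicontinuity.

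Since the rank of a continuously varying real matrix is lower semicontinuous, both summands reduce to a continuity question. The dimension $\dim\mathcal{T}(G,p)$ is the rank of the evaluation map $\Phi_p:\mathcal{T}(X)\to(\mathbb{R}^d)^V$ sending $\eta\mapsto(\eta(p_v))_{v\in V}$; after fixing a basis of the ambient space $\mathcal{T}(X)$, the matrix representing $\Phi_p$ has entries that are affine in the coordinates of $p$, so lower semicontinuity of its rank is immediate. For $\rank R(G,p)$ the entries of $R(G,p)$ are coordinates of the support functionals $\varphi_{v,w}$ at the normalised edge vectors, so it is enough to show that the map $x_0\mapsto\varphi_{x_0}$ is continuous on the set of smooth points of $S_X$; composing with the continuous maps $p\mapsto (p_v-p_w)/\|p_v-p_w\|_X$, which take values in the smooth part of $S_X$ throughout $\mathcal{W}(G,X)$ by definition, then yields continuity of $R(G,p)$.

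The main technical step is the continuity of the support functional assignment, which I would prove by a subsequence/compactness argument. Let $x_0$ be smooth and let $x_n\to x_0$ through smooth points of $S_X$. The sequence $(\varphi_{x_n})$ lies in the compact dual unit ball of $X$, so any subsequence has a further subsequence converging to some $\psi$ with $\|\psi\|_X^*\leq 1$. Passing to the limit in the identity $\varphi_{x_n}(x_n)=1$ gives $\psi(x_0)=1$, so $\psi$ is a support functional for $x_0$, and uniqueness at the smooth point $x_0$ forces $\psi=\varphi_{x_0}$. Since every subsequence of $(\varphi_{x_n})$ has a further subsequence converging to the same limit $\varphi_{x_0}$, the full sequence converges to $\varphi_{x_0}$, yielding continuity and completing the proof.
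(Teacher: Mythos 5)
Your argument is correct. Note that the paper does not prove this lemma itself --- it is imported verbatim from \cite[Corollary 3.8]{Dewar22} --- so there is no in-paper proof to compare against; judged on its own, your proposal is a sound, self-contained derivation. The two ingredients are exactly the ones the cited source relies on: continuity of $p\mapsto R(G,p)$ on $\mathcal{W}(G,X)$ (which this paper separately imports as \cite[Lemma 4.3]{Dewar21} in the proof of \Cref{l:continuity2}; your compactness/uniqueness argument for the continuity of $x_0\mapsto\varphi_{x_0}$ at smooth points is a correct proof of that ingredient) and lower semicontinuity of matrix rank. A nice feature of your formulation is folding $\dim\mathcal{T}(G,p)$ into the quantity $f(p)=\dim\mathcal{T}(G,p)+\rank R(G,p)\leq d|V|$ via the evaluation map $\Phi_p$, whose matrix depends polynomially on $p$: this makes the characterisation of infinitesimal rigidity as $f(p)=d|V|$ valid without any full-affine-span hypothesis, so no case split is needed. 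Two micro-points you leave implicit but which are immediate: the limit functional $\psi$ satisfies $\|\psi\|_X^*\geq|\psi(x_0)|/\|x_0\|_X=1$ and hence is genuinely a support functional (not merely of dual norm at most one), and since $f$ is integer-valued, lower semicontinuity upgrades $\{f>d|V|-1\}$ to the open set $\{f\geq d|V|\}=\mathcal{R}(G,X)$.
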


\section{Algebraic connectivity in normed spaces}
\label{s:algcon}
In this section, we introduce the framework Laplacian matrix and rigidity eigenvalue for a framework  in a general $d$-dimensional real normed linear space $X$ and establish several properties for the algebraic connectivity of a graph  in $X$. 

\subsection{Framework Laplacian matrices}
\label{s:frameworklaplacian}
Let $(G,p)$ be a framework  in a normed linear space $X=(\mathbb{R}^d,\|\cdot\|_X)$.
The {\em framework Laplacian matrix} (or {\em stiffness matrix}) $L^X(G,p)$, or simply $L(G,p)$, is the $d|V| \times d|V|$ real symmetric matrix, $$L(G,p) := R(G,p)^\top R(G,p).$$
The framework Laplacian matrix $L(G,p)$ is positive semidefinite and so the eigenvalues of $L(G,p)$ are non-negative real numbers $0\leq \lambda_1(L(G,p))\leq \lambda_2(L(G,p))\leq \cdots \leq \lambda_{d|V|}(L(G,p))$. 

\begin{lemma}
\label{l:rigidityeigenvalue}
Let $(G,p)$ be a framework with full affine span in $X=(\mathbb{R}^d,\|\cdot\|_X)$. 
\begin{enumerate}
    \item $\lambda_1(L(G,p))=\cdots = \lambda_{k(X)}(L(G,p))=0$.
    \item $(G,p)$ is infinitesimally rigid if and only if $\lambda_{k(X)+1}(L(G,p))>0$.
    \item $ \lambda_{k(X)+1}(L(G,p)) = \min\{x^\top L(G,p) x: x\in\mathcal{T}(G,p)^\perp,\,\|x\|_2=1\}$.
\end{enumerate}
\end{lemma}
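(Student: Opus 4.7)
The plan is to exploit the factorisation $L(G,p)=R(G,p)^\top R(G,p)$, which immediately gives $\ker L(G,p)=\ker R(G,p)$ since $x^\top L(G,p)x=\|R(G,p)x\|_2^2$, and then combine this with \Cref{l:trivial} and the Courant--Fischer Theorem.

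I would first establish the key inclusion $\mathcal{T}(G,p)\subseteq\ker L(G,p)$. By definition of infinitesimal rigidity, every trivial infinitesimal flex lies in $\ker R(G,p)$, so the inclusion follows from the factorisation above. Since $(G,p)$ has full affine span, \Cref{l:trivial} gives $\dim\mathcal{T}(G,p)=k(X)$, so the zero eigenspace of $L(G,p)$ has dimension at least $k(X)$. Because $L(G,p)$ is positive semidefinite (all eigenvalues are non-negative), this forces $\lambda_1(L(G,p))=\cdots=\lambda_{k(X)}(L(G,p))=0$, proving (i).

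For (ii) I would simply chain the equivalences: $(G,p)$ is infinitesimally rigid iff $\ker R(G,p)=\mathcal{T}(G,p)$, iff $\dim\ker L(G,p)=k(X)$, iff the multiplicity of $0$ as an eigenvalue of $L(G,p)$ is exactly $k(X)$. Combined with (i) and positive semidefiniteness, this is precisely the statement that $\lambda_{k(X)+1}(L(G,p))>0$.

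For (iii), I would apply the Courant--Fischer Theorem (\Cref{t:courant}) with a carefully chosen eigenbasis. Since $\mathcal{T}(G,p)$ is a $k(X)$-dimensional subspace of the zero eigenspace of $L(G,p)$, we can pick an orthonormal basis $y_1,\ldots,y_{k(X)}$ of $\mathcal{T}(G,p)$, extend it to an orthonormal basis of $\ker L(G,p)$, and then extend further to an orthonormal eigenbasis $y_1,\ldots,y_{d|V|}$ of $\mathbb{R}^{d|V|}$ ordered so that $y_j$ corresponds to $\lambda_j(L(G,p))$. With this choice the space $Y_{k(X)}$ appearing in \Cref{t:courant} is exactly $\mathcal{T}(G,p)$, and the theorem yields
\[
\lambda_{k(X)+1}(L(G,p))=\min\{x^\top L(G,p)x:x\in\mathcal{T}(G,p)^\perp,\,\|x\|_2=1\},
\]
as required. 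The only subtle point, and the main (mild) obstacle, is justifying that the first $k(X)$ eigenvectors in the Courant--Fischer ordering can be selected to span $\mathcal{T}(G,p)$; this is legitimate precisely because $\mathcal{T}(G,p)$ sits inside the smallest (zero) eigenspace and any orthonormal basis of an eigenspace extends to an orthonormal eigenbasis of the ambient space.
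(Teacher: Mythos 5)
Your proposal is correct and follows essentially the same route as the paper: part (i) via $\mathcal{T}(G,p)\subseteq\ker R(G,p)=\ker L(G,p)$ together with \Cref{l:trivial}, part (ii) by comparing the nullity of $L(G,p)$ with $k(X)$ (the paper phrases this through ranks, which is equivalent by rank--nullity), and part (iii) by the Courant--Fischer Theorem. The only difference is that you spell out the choice of eigenbasis with $Y_{k(X)}=\mathcal{T}(G,p)$, a detail the paper leaves implicit, and your justification of that choice is sound.
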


\begin{proof}
$(i)$: The kernel of $L(G,p)$ contains the space $\mathcal{T}(G,p)$ of trivial infinitesimal flexes of $(G,p)$. Thus, the result follows from \Cref{l:trivial}.

    $(ii)$: Let $\lambda_i(L(G,p))$ be the smallest non-zero eigenvalue of the framework Laplacian $L(G,p)$. Then $\rank R(G,p) =\rank L(G,p) = d|V| - i+1$.
   Thus,  the framework $(G,p)$ is infinitesimally rigid  if and only if $i=k(X)+1$.

   $(iii)$: Apply the Courant-Fischer Theorem (\Cref{t:courant}) to $L(G,p)$.
\end{proof}

In light of the above lemma,  the $k(X)+1$ smallest eigenvalue $\lambda_{k(X)+1}(L(G,p))$ will be referred to as the {\em rigidity eigenvalue} for the framework $(G,p)$. 

\begin{remark}
 For the $1$-dimensional normed space $X=(\mathbb{R},|\cdot|)$, where $|\cdot|$ denotes the absolute value, note that $k(X)=1$. In this case, the rigidity matrix $R(G,p)$ for a framework $(G,p)$ in $X$ coincides with an oriented incidence matrix $C(G)$ for some orientation of the edges of $G$.
Thus, the framework Laplacian $L(G,p)$ coincides with the graph Laplacian $L(G)$ and the rigidity eigenvalue $\lambda_2(L(G,p))$ is the algebraic connectivity (or Fiedler number) of the graph $G$. In particular,   $a(G,X)=a(G)$.
\end{remark}

The framework Laplacian matrix $L(G,p)$ can be regarded as a $|V| \times |V|$ block matrix with entries in $M_d(\mathbb{R})$. The $(v,w)$-entry for a pair of vertices $v,w\in V$ is the $d \times d$ matrix, 
\begin{equation}
\label{eq:laplacian}
    L_{v,w}^p := \left\{ \arraycolsep=1.4pt\def\arraystretch{1.5}
    \begin{array}{cl}
        \sum_{x \sim v} \varphi_{v,x}^\top  \varphi_{v,x} &\text{ if } v = w, \\
        -\varphi_{v,w}^\top  \varphi_{v,w}  &\text{ if } v \neq w\text{ and } vw\in E, \\
        0_{d\times d} &\text{ otherwise}.
    \end{array}    \right.
\end{equation}
For the following result,  recall that two  matrices $A,B\in M_n(\mathbb{R})$ are {\em congruent} if there exists an invertible matrix $S\in M_n(\mathbb{R})$ such that $S^\top A S = B$.

\begin{lemma}
\label{l:congruent}
    Let $X=(\mathbb{R}^d,\|\cdot\|_X)$ and $Y=(\mathbb{R}^d,\|\cdot\|_Y)$ and let $(G,p)$ be a framework in $X$.
If $\Psi:X\to Y$ is an isometric isomorphism and $\Psi(p):=(\Psi(p_v))_{v\in V}$ then:
\begin{enumerate}[(i)]
\item The pair $(G,\Psi(p))$  is a framework in $Y$.
\item The framework Laplacian matrices $L^X(G,p)$ and $L^Y(G,\Psi(p))$ are congruent.
\end{enumerate}
\end{lemma}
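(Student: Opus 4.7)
The plan is to transfer everything from $X$ to $Y$ using the identity $\varphi^X_{v,w} = \varphi^Y_{v,w}\circ \Psi$ supplied by \Cref{l:smooth}. Linearity of $\Psi$ together with the isometry property will allow both parts of the statement to be packaged by a single matrix factorization: $R^X(G,p) = R^Y(G,\Psi(p))\cdot S$, where $S := I_{|V|} \otimes [\Psi]$ and $[\Psi] \in M_d(\mathbb{R})$ denotes the standard matrix of $\Psi$.

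For part (i), distinctness of $\Psi(p_v)$ and $\Psi(p_w)$ is immediate from injectivity of $\Psi$, while linearity and the isometry property give
\[
\frac{\Psi(p_v)-\Psi(p_w)}{\|\Psi(p_v)-\Psi(p_w)\|_Y} = \Psi\!\left(\frac{p_v-p_w}{\|p_v-p_w\|_X}\right).
\]
Applying \Cref{l:smooth} to $x_0 := (p_v-p_w)/\|p_v-p_w\|_X$ then shows that the norm on $Y$ is smooth at this normalized vector and, at the same time, supplies the identity $\varphi^X_{v,w} = \varphi^Y_{v,w}\circ \Psi$ for every edge $vw\in E$. Hence $(G,\Psi(p))$ is a framework in $Y$.

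For part (ii), the functional identity above reads, as $1\times d$ row matrices, $\varphi^X_{v,w} = \varphi^Y_{v,w}\cdot [\Psi]$. Right-multiplication of the rigidity matrix by the block-diagonal matrix $S = I_{|V|}\otimes[\Psi]$ applies $[\Psi]$ to each block of $d$ consecutive columns, so an entrywise comparison with the definition of $R(G,p)$ yields $R^X(G,p) = R^Y(G,\Psi(p)) \cdot S$. Squaring gives
\[
L^X(G,p) = R^X(G,p)^\top R^X(G,p) = S^\top\, L^Y(G,\Psi(p))\, S,
\]
and invertibility of $[\Psi]$ (hence of $S$) establishes the required congruence. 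The only delicate step is bookkeeping with the block-column structure of the rigidity matrix; no serious obstacle is expected, since the whole argument is driven by the single transport identity $\varphi^X_{v,w} = \varphi^Y_{v,w}\circ \Psi$ from \Cref{l:smooth}.
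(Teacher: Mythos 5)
Your proof is correct and follows essentially the same route as the paper: part (i) is identical, and part (ii) rests on the same transport identity $\varphi^X_{v,w}=\varphi^Y_{v,w}\circ\Psi$ from \Cref{l:smooth}. The only (harmless) difference is that you obtain the congruence by factoring the rigidity matrix as $R^X(G,p)=R^Y(G,\Psi(p))S$ and squaring, whereas the paper verifies the block identity $L^p_{v,w}=\Psi^\top L^{\Psi(p)}_{v,w}\Psi$ directly from \eqref{eq:laplacian}; your $S=I_{|V|}\otimes[\Psi]$ and the paper's $\Psi\otimes I_n$ differ only by the choice of ordering convention on $V\times[d]$.
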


\begin{proof}
$(i)$ Let $vw\in E$ be an edge in the graph $G=(V,E)$. 
Since $(G,p)$ is a framework in $X$, the points $p_v$ and $p_w$ are distinct and the unit vector $x_0:=\frac{p_v-p_w}{\|p_v-p_w\|_X}\in X$ has a unique support functional.
The map $\Psi$ is injective and so the points $\Psi(p_v)$ and $\Psi(p_w)$ are also distinct.
Let  $y_0:=
\frac{\Psi(p_v)-\Psi(p_w)}{\|\Psi(p_v)-\Psi(p_w)\|_Y}$.
Note that $y_0=\Psi(x_0)$.
Thus, by \Cref{l:smooth}, the norm on $Y$ is smooth at $y_0$. 

$(ii)$    For each edge $vw\in E$, denote by $\varphi_{v,w}^X$ and $\varphi_{v,w}^Y$ the unique support functionals for the unit vectors $\frac{p_v-p_w}{\|p_v-p_w\|_X}\in X$ and $\frac{\Psi(p_v)-\Psi(p_w)}{\|\Psi(p_v)-\Psi(p_w)\|_Y}\in Y$ respectively. Then, by \Cref{l:smooth},  $\varphi_{v,w}^X=\varphi_{v,w}^Y\circ \Psi$. 
Using \eqref{eq:laplacian}, it follows that for each pair of vertices $v,w\in V$, the $(v,w)$-entry of the respective framework Laplacian matrices satisfies, $$L_{v,w}^p = \Psi^\top L_{v,w}^{\Psi(p)}\,\Psi.$$
Hence, $L^X(G,p) = S^\top L^Y(G,\Psi(p))\, S$ where  $S:= \Psi\otimes I_n$ is invertible.
\end{proof}

\begin{lemma}\label{l:continuity2}
    Let $G=(V,E)$ be a graph with $n:=|V| \geq d+1$ and let $X=(\mathbb{R}^d,\|\cdot\|_X)$.
           The map, 
        \begin{equation*}
             \mathcal{W}(G,X) \rightarrow M_{nd}(\mathbb{R}), ~ p \mapsto L(G,p),
        \end{equation*}
        is continuous.
\end{lemma}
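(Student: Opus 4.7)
The plan is to factor the continuity of $L(G,p)$ through the continuity of the rigidity matrix $R(G,p)$. Since matrix multiplication is continuous and $L(G,p)=R(G,p)^{\top}R(G,p)$, it suffices to show that the map $\mathcal{W}(G,X)\to M_{|E|\times d|V|}(\mathbb{R})$, $p\mapsto R(G,p)$, is continuous. By the formula for the entries of $R(G,p)$, this in turn reduces to showing that for each edge $vw\in E$, the functional $\varphi_{v,w}$ depends continuously on $p\in\mathcal{W}(G,X)$.

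The map $p\mapsto x(p):=\frac{p_v-p_w}{\|p_v-p_w\|_X}$ is continuous on $\mathcal{W}(G,X)$ since the denominator never vanishes on $\mathcal{W}(G,X)$ and the norm is continuous. Composing, the proof reduces to the following core fact: if $\mathcal{S}_X$ denotes the set of smooth points in $S_X$, then the map $\mathcal{S}_X\to (\mathbb{R}^d)^*$, $x_0\mapsto \varphi_{x_0}$, is continuous. I would establish this by a standard compactness argument. Suppose $x_k\to x_0$ in $\mathcal{S}_X$. The dual unit ball $\{\varphi:\|\varphi\|_X^*\leq 1\}$ is compact in $\mathbb{R}^{1\times d}$, so any subsequence of $(\varphi_{x_k})$ has a further subsequence $(\varphi_{x_{k_j}})$ converging to some $\psi$ with $\|\psi\|_X^*\leq 1$. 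Passing to the limit in $\varphi_{x_{k_j}}(x_{k_j})=1$ gives $\psi(x_0)=1$, which forces $\|\psi\|_X^*=1$. Hence $\psi$ is a support functional for $x_0$, and by smoothness of the norm at $x_0$, $\psi=\varphi_{x_0}$. Since every subsequence has a sub-subsequence converging to the same limit, the whole sequence $(\varphi_{x_k})$ converges to $\varphi_{x_0}$.

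Combining these pieces, each entry $r_{e,(v,i)}$ of $R(G,p)$ depends continuously on $p$, so $p\mapsto R(G,p)$ is continuous, and therefore so is $p\mapsto L(G,p)$. The main (and really only) subtle point is the continuity of the support functional on the smooth points of the unit sphere; once that is in hand, everything else is formal. One could alternatively package this step by referring to the well-known fact that a convex function on $\mathbb{R}^d$ has continuous Gâteaux derivative at every point of differentiability (applied to $\|\cdot\|_X$), but the direct compactness argument above is self-contained and short.
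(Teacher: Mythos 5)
Your proposal is correct and follows the same route as the paper: both reduce to the continuity of $p\mapsto R(G,p)$ and then conclude via $L(G,p)=R(G,p)^\top R(G,p)$. The only difference is that the paper simply cites \cite[Lemma 4.3]{Dewar21} for the continuity of the rigidity matrix, whereas you prove it from scratch; your compactness/uniqueness argument for the continuity of $x_0\mapsto\varphi_{x_0}$ on the smooth points of $S_X$ is sound and is essentially the content of that cited lemma.
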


\begin{proof}
    The map $\mathcal{W}(G,X) \to \mathbb{R}^{|E| \times d|V|}$, $p \mapsto R(G,p)$, is continuous by \cite[Lemma 4.3]{Dewar21}.
    The result now follows as $L(G,p) = R(G,p)^\top R(G,p)$.
\end{proof}

\subsection{Algebraic connectivity in \texorpdfstring{$X$}{X}}
Let $G$ be a graph with at least $d+1$ vertices and let $X=(\mathbb{R}^d,\|\cdot\|_X)$ be a normed linear space.
The {\em algebraic connectivity of $G$ in $X$} is the value,
\begin{equation*}
    a(G,X) := \sup \left\{ \lambda_{k(X)+1}(L(G,p)) : p \in \mathcal{W}(G,X) \right\}.
\end{equation*}

\begin{proposition}
\label{p:alg}
Let $X=(\mathbb{R}^d,\|\cdot\|_X)$ be a normed linear space and let $G=(V,E)$ be a graph with at least $d+1$ vertices. 
\begin{enumerate}[(i)]
\item If $U$ is a dense subset of $\mathcal{W}(G,X)$ then,
    \begin{equation*}
    a(G,X) = \sup \left\{ \lambda_{k(X)+1}(L(G,p)) : p \in U\right\}.
    \end{equation*}
\item If $U'$ is an open and dense subset of $(\mathbb{R}^d)^V$ then,
    \begin{equation*}
    a(G,X) = \sup \left\{ \lambda_{k(X)+1}(L(G,p)) : p \in \mathcal{W}(G,X)\cap U'\right\}.
    \end{equation*}
    \end{enumerate}
\end{proposition}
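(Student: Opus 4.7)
The plan is to reduce both parts to continuity of the map $p \mapsto \lambda_{k(X)+1}(L(G,p))$ on $\mathcal{W}(G,X)$. By \Cref{l:continuity2}, $p \mapsto L(G,p)$ is a continuous map from $\mathcal{W}(G,X)$ into the symmetric $nd \times nd$ matrices. Weyl's Perturbation Theorem (\Cref{t:weyl}) states that $|\lambda_j(A) - \lambda_j(B)| \leq \|A-B\|_2$ for all symmetric $A, B$, which means each eigenvalue functional $A \mapsto \lambda_j(A)$ is $1$-Lipschitz in the spectral norm. Composing, the function $f : \mathcal{W}(G,X) \to \mathbb{R}$ defined by $f(p) := \lambda_{k(X)+1}(L(G,p))$ is continuous.

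For part $(i)$, the inequality $\sup_U f \leq a(G,X)$ is immediate since $U \subseteq \mathcal{W}(G,X)$. For the reverse, I would fix an arbitrary $p \in \mathcal{W}(G,X)$ and, by density of $U$ in $\mathcal{W}(G,X)$, choose a sequence $u_n \in U$ with $u_n \to p$. Continuity of $f$ gives $f(u_n) \to f(p)$, so $f(p) \leq \sup_U f$. Taking the supremum over $p \in \mathcal{W}(G,X)$ yields $a(G,X) \leq \sup_U f$, completing the proof of $(i)$.

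For part $(ii)$, I would reduce to $(i)$ by setting $U := \mathcal{W}(G,X) \cap U'$ and verifying that $U$ is dense in $\mathcal{W}(G,X)$. To see this, let $p \in \mathcal{W}(G,X)$ and let $N$ be an open neighbourhood of $p$ in $(\mathbb{R}^d)^V$. Since $U'$ is open and dense, $N \cap U'$ is nonempty and open in $(\mathbb{R}^d)^V$. By \Cref{l:wpdense}, $\mathcal{W}(G,X)$ is dense in $(\mathbb{R}^d)^V$, so $\mathcal{W}(G,X) \cap (N \cap U') = U \cap N$ is nonempty. Hence $U$ meets every neighbourhood of $p$ in $\mathcal{W}(G,X)$, which shows $U$ is dense in $\mathcal{W}(G,X)$, and part $(i)$ applies. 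The only subtlety here, and the mildest obstacle in the argument, is correctly pairing the topologies (using the subspace topology on $\mathcal{W}(G,X)$) when passing between density in $(\mathbb{R}^d)^V$ and density in $\mathcal{W}(G,X)$, but this is handled transparently by the Baire-type argument above.
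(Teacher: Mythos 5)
Your proposal is correct and follows essentially the same route as the paper: both parts rest on the continuity of $p\mapsto L(G,p)$ (\Cref{l:continuity2}) combined with Weyl's Perturbation Theorem to get continuity of the rigidity eigenvalue, with $(ii)$ reduced to $(i)$ via the density of $\mathcal{W}(G,X)\cap U'$ supplied by \Cref{l:wpdense}. The only cosmetic difference is that you argue directly with approximating sequences where the paper argues by contradiction, and you spell out the subspace-topology density check that the paper leaves implicit.
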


\begin{proof}
    $(i)$ Let $a'(G,X):=\sup \left\{ \lambda_{k(X)+1}(L(G,p)) : p \in U\right\}$.
    Clearly, $a(G,X)\geq a'(G,X)$.
    To prove the reverse inequality holds, suppose $\lambda_{k(X)+1}(L(G,p))> a'(G,X)$ for some $p\in\mathcal{W}(G,X)$.
    The map $\mathcal{W}(G,X)\to M_{nd}(\mathbb{R})$, $p'\mapsto L(G,p')$, is continuous by \Cref{l:continuity2}.
    By Weyl's Perturbation Theorem (\Cref{t:weyl}), it follows that $\lambda_{k(X)+1}(L(G,p'))>a'(G,X)$ for all $p'\in\mathcal{W}(G,X)$ sufficiently close to $p$.
    Since $U$ is dense in $\mathcal{W}(G,X)$, there exists $p'\in U$ such that $\lambda_{k(X)+1}(L(G,p'))>a'(G,X)$.
    This is a contradiction and so $a'(G,X)\geq a(G,X)$.

    $(ii)$ By \Cref{l:wpdense}, $\mathcal{W}(G,X)$ is a dense subset of $(\mathbb{R}^d)^V$. 
    Thus, the intersection $\mathcal{W}(G,X)\cap U'$ is also  dense in $(\mathbb{R}^d)^V$. The result now follows from $(i)$.
\end{proof}

\begin{remark}
    Note that \Cref{p:alg}$(ii)$ can be applied to the set $U'$ of points $p\in (\mathbb{R}^d)^{V}$ for which the components $p_v\in\mathbb{R}^d$, $v\in V$, are distinct. In the special case where $X=\ell_2^d$, this was proved in \cite[Lemma 2.4]{LNPR23} using different methods.  
\end{remark}

Let $\mathcal{A}(G,X)$ be the set of points $p\in (\mathbb{R}^d)^V$ such that $\{p_v:v\in V\}$ has full affine span in $\mathbb{R}^d$.

\begin{lemma}
\label{l:affine}
    Let $X=(\mathbb{R}^d,\|\cdot\|_X)$ be a normed linear space and let $G=(V,E)$ be a graph.
    Then  the intersection $\mathcal{W}(G,X)\cap \mathcal{A}(G,X)$ is  dense in $(\mathbb{R}^d)^V$.
\end{lemma}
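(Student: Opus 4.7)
The plan is to decompose the problem into two parts: first, establishing that $\mathcal{A}(G,X)$ is itself open and dense in $(\mathbb{R}^d)^V$, and then combining this with the density of $\mathcal{W}(G,X)$ provided by \Cref{l:wpdense}. Note that we must implicitly be in the regime $|V|\geq d+1$, since otherwise $\mathcal{A}(G,X)$ is empty and no density statement of this form could hold; this matches the standing hypothesis needed for the subsequent results in this section.

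First I would fix an enumeration $V=\{v_1,\ldots,v_n\}$ and observe that $p\in\mathcal{A}(G,X)$ precisely when the $(n-1)\times d$ matrix $M(p)$ whose rows are the vectors $p_{v_i}-p_{v_1}$ (for $i=2,\ldots,n$) has rank $d$; equivalently, at least one of its $d\times d$ minors is non-zero. Each such minor is a polynomial in the coordinates of $p$, so $\mathcal{A}(G,X)$ is a finite union of Zariski-open subsets of $(\mathbb{R}^d)^V$ and is therefore open in the Euclidean topology. Provided $|V|\geq d+1$, at least one of these minors is a non-trivial polynomial (it takes a non-zero value at, for instance, a point whose components include the standard basis together with the origin). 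The zero set of a non-trivial polynomial is Lebesgue-null and has empty interior, hence is nowhere dense; since the complement of $\mathcal{A}(G,X)$ is contained in this zero set, it is itself nowhere dense, and so $\mathcal{A}(G,X)$ is dense (indeed conull) in $(\mathbb{R}^d)^V$.

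To finish, I would invoke the elementary topological fact that the intersection of an open dense set with a dense set is dense. Given any non-empty open $U\subseteq(\mathbb{R}^d)^V$, the set $U\cap\mathcal{A}(G,X)$ is open and non-empty by density of $\mathcal{A}(G,X)$, and therefore meets $\mathcal{W}(G,X)$ by \Cref{l:wpdense}. Thus $\mathcal{W}(G,X)\cap\mathcal{A}(G,X)$ meets every non-empty open subset of $(\mathbb{R}^d)^V$, which is the required density. There is no serious obstacle here: the only substantive ingredient is the recognition that the full-affine-span condition is cut out by the non-vanishing of polynomial minors, after which the rest is a routine combination of measure-theoretic and topological density.
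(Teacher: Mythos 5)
Your proof is correct and follows essentially the same route as the paper's, which simply notes that $\mathcal{A}(G,X)$ is open and dense, cites \Cref{l:wpdense} for the density of $\mathcal{W}(G,X)$, and concludes; you merely supply the details (the polynomial-minor argument and the fact that an open dense set meets a dense set in a dense set) that the paper leaves implicit. Your remark that the statement tacitly requires $|V|\geq d+1$ is a fair observation the paper glosses over.
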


\begin{proof}
    Note that $\mathcal{A}(G,X)$ is an open and dense subset of $(\mathbb{R}^d)^V$.
    By \Cref{l:wpdense}, $\mathcal{W}(G,X)$ is a dense subset of $(\mathbb{R}^d)^V$. The result follows.
\end{proof}

\begin{proposition}
\label{p:dense}
  Let $X=(\mathbb{R}^d,\|\cdot\|_X)$ be a normed linear space and let $G=(V,E)$ be a graph with at least $d+1$ vertices. 
  If $H=(V,E(H))$ is a spanning subgraph of $G$ then,
    \begin{equation*}
    a(H,X) = \sup \left\{ \lambda_{k(X)+1}(L(H,p)) : p \in \mathcal{W}(G,X)\cap \mathcal{A}(G,X)\right\}.
\end{equation*}
In particular, 
      \begin{equation*}
    a(G,X) = \sup \left\{ \lambda_{k(X)+1}(L(G,p)) : p \in \mathcal{W}(G,X)\cap \mathcal{A}(G,X)\right\}.
\end{equation*}
\end{proposition}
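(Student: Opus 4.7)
The plan is to derive this statement as an easy consequence of \Cref{p:alg}(i) applied to the subgraph $H$, using \Cref{l:affine} to supply a dense subset of $\mathcal{W}(H,X)$. The guiding observation is that the subset $U:=\mathcal{W}(G,X)\cap\mathcal{A}(G,X)$ is ``large enough'' with respect to the bigger graph $G$, and therefore automatically large enough for every spanning subgraph $H$.

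First I would record that $\mathcal{W}(G,X)\subseteq \mathcal{W}(H,X)$. Indeed, the defining conditions $(i)$ and $(ii)$ of a framework are imposed edgewise, and since $E(H)\subseteq E(G)$, any point tuple $p$ for which $(G,p)$ satisfies them at every edge of $G$ also satisfies them at every edge of $H$. Consequently $U\subseteq \mathcal{W}(H,X)$, so the supremum in the statement is well posed (the eigenvalues $\lambda_{k(X)+1}(L(H,p))$ are defined for $p\in U$). Note also that $H$ is spanning, so $|V(H)|=|V(G)|\geq d+1$, which lets us apply \Cref{p:alg} to $H$.

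Next, \Cref{l:affine} gives that $U$ is dense in $(\mathbb{R}^d)^V$. Since $\mathcal{W}(H,X)\subseteq (\mathbb{R}^d)^V$ and $U\subseteq \mathcal{W}(H,X)$, this density immediately passes to density of $U$ inside $\mathcal{W}(H,X)$ (any open neighbourhood in $\mathcal{W}(H,X)$ extends to an open neighbourhood in $(\mathbb{R}^d)^V$, which meets $U$). Applying \Cref{p:alg}(i) to the graph $H$ with dense subset $U$ then yields
\begin{equation*}
a(H,X)=\sup\{\lambda_{k(X)+1}(L(H,p)) : p\in \mathcal{W}(G,X)\cap\mathcal{A}(G,X)\},
\end{equation*}
which is the displayed equality. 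The ``in particular'' statement follows by specialising to $H=G$.

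There is really no substantial obstacle; the only care-point is verifying that $U$ still lies in $\mathcal{W}(H,X)$ when $H\neq G$, so that one is legitimately applying \Cref{p:alg}(i) to $H$ rather than to $G$. Everything else is a direct invocation of \Cref{l:affine} and \Cref{p:alg}(i).
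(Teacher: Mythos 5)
Your proof is correct and follows essentially the same route as the paper's: both establish $\mathcal{W}(G,X)\subseteq\mathcal{W}(H,X)$, use \Cref{l:affine} to get density of $\mathcal{W}(G,X)\cap\mathcal{A}(G,X)$ in $(\mathbb{R}^d)^V$ and hence in $\mathcal{W}(H,X)$, and then apply \Cref{p:alg}$(i)$ to $H$. No issues.
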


\begin{proof}
     Note that $\mathcal{A}(H,X)=\mathcal{A}(G,X)$ since $H$ has the same vertex set as $G$. By \Cref{l:affine},  $\mathcal{W}(G,X)\cap \mathcal{A}(G,X)$ is dense in $(\mathbb{R}^d)^V$. 
    Since $H$ is a spanning subgraph of $G$, $\mathcal{W}(G,X)\subseteq \mathcal{W}(H,X)$. Thus, $\mathcal{W}(G,X)\cap \mathcal{A}(G,X)$ is a dense subset of $\mathcal{W}(H,X)$. The result now follows from \Cref{p:alg}$(i)$.
\end{proof}

\begin{proposition}\label{p:rigid}
   Let $X=(\mathbb{R}^d,\|\cdot\|_X)$ be a normed linear space and let $G=(V,E)$ be a graph with at least $d+1$ vertices. Then $G$ is rigid in $X$ if and only if $a(G,X) >0$.
\end{proposition}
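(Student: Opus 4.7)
The plan is to prove both directions using \Cref{l:rigidityeigenvalue}(ii), which characterises infinitesimal rigidity as positivity of the $(k(X){+}1)$-th eigenvalue of $L(G,p)$, \emph{but only when $(G,p)$ has full affine span}. Ensuring this hypothesis is the key technical point in both directions.

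For the reverse direction, suppose $a(G,X)>0$. By \Cref{p:dense} the supremum defining $a(G,X)$ may be taken over $\mathcal{W}(G,X)\cap \mathcal{A}(G,X)$, so we may pick $p$ in this set with $\lambda_{k(X)+1}(L(G,p))>0$. Since $(G,p)$ now has full affine span, \Cref{l:rigidityeigenvalue}(ii) gives that $(G,p)$ is infinitesimally rigid, whence $p\in\mathcal{R}(G,X)$ and $G$ is rigid in $X$.

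For the forward direction, suppose $G$ is rigid in $X$, so $\mathcal{R}(G,X)\neq\emptyset$. To apply \Cref{l:rigidityeigenvalue}(ii) we need an infinitesimally rigid framework that also has full affine span. By \Cref{l:continuity}, $\mathcal{R}(G,X)$ is open in $\mathcal{W}(G,X)$, so there is a non-empty open set $O\subseteq (\mathbb{R}^d)^V$ with $\mathcal{R}(G,X)=O\cap\mathcal{W}(G,X)$. By \Cref{l:affine}, $\mathcal{W}(G,X)\cap\mathcal{A}(G,X)$ is dense in $(\mathbb{R}^d)^V$, so its intersection with the non-empty open set $O$ is non-empty. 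This yields a point $p\in\mathcal{R}(G,X)\cap\mathcal{A}(G,X)$, and then \Cref{l:rigidityeigenvalue}(ii) gives $\lambda_{k(X)+1}(L(G,p))>0$, so $a(G,X)\geq \lambda_{k(X)+1}(L(G,p))>0$.

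The only subtle step is the density argument needed in the forward direction to upgrade a general point of $\mathcal{R}(G,X)$ to one with full affine span; this is a straightforward combination of the openness in \Cref{l:continuity} with the density in \Cref{l:affine}, and is essentially a mirror image of the density reduction already used in \Cref{p:dense} for the reverse direction.
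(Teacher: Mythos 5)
Your proof is correct and follows essentially the same route as the paper: the reverse direction uses \Cref{p:dense} to obtain a full-affine-span framework with positive rigidity eigenvalue, and the forward direction combines the openness of $\mathcal{R}(G,X)$ from \Cref{l:continuity} with the density of $\mathcal{W}(G,X)\cap\mathcal{A}(G,X)$ from \Cref{l:affine} to perturb an infinitesimally rigid framework into one with full affine span. The paper compresses this perturbation step into a single sentence; you have simply written it out explicitly.
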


\begin{proof}
If $G$ is rigid in $X$ then there exists $p\in \mathcal{W}(G,X)$ such that the framework $(G,p)$ is infinitesimally rigid. 
By \Cref{l:affine},  $\mathcal{W}(G,X)\cap \mathcal{A}(G,X)$ is dense in $(\mathbb{R}^d)^V$. Thus, by \Cref{l:continuity},
we may assume that $p \in \mathcal{W}(G,X)\cap\mathcal{A}(G,X)$.
By \Cref{l:rigidityeigenvalue}$(ii)$, 
$a(G,X)\geq \lambda_{k(X)+1}(L(G,p))>0$.

For the converse, suppose $a(G,X)>0$.
By \Cref{p:dense}, there exists $p\in \mathcal{W}(G,X)\cap \mathcal{A}(G,X)$ such that $\lambda_{k(X)+1}(L(G,p))>0$.
By \Cref{l:rigidityeigenvalue}$(ii)$,
$(G,p)$ is infinitesimally rigid, and so $G$ is rigid in $X$.
\end{proof}

\begin{lemma}
    Let $X=(\mathbb{R}^d,\|\cdot\|_X)$, where $d\geq 2$ and $k(X)=d$, and let $G=(V,E)$ be a graph with at least $d+1$ vertices. If $|V|<2d$ then $a(G,X)=0$. 
\end{lemma}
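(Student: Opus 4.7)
The plan is to reduce the statement to a non-rigidity claim via \Cref{p:rigid}, and then show non-rigidity by a simple edge-counting argument using that $k(X)=d$.

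First I would apply \Cref{p:rigid}: since $|V|\geq d+1$, we have $a(G,X)>0$ if and only if $G$ is rigid in $X$. Hence it suffices to verify that $G$ cannot be rigid in $X$ whenever $|V|<2d$. Equivalently, I need to rule out the existence of any $p\in \mathcal{W}(G,X)$ for which $(G,p)$ is infinitesimally rigid. By \Cref{l:affine}, any $p\in\mathcal{W}(G,X)$ realising rigidity can be assumed to lie in $\mathcal{W}(G,X)\cap \mathcal{A}(G,X)$ (using \Cref{l:continuity} to open up the rigidity condition), so I may assume $(G,p)$ has full affine span and apply \Cref{l:trivial}, which tells us $\dim \mathcal{T}(G,p) = k(X)=d$.

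Next, infinitesimal rigidity of $(G,p)$ requires $\rank R(G,p) = d|V| - k(X) = d(|V|-1)$. Since $R(G,p)$ has exactly $|E|$ rows, this forces
\begin{equation*}
|E| \;\geq\; d(|V|-1).
\end{equation*}
On the other hand $|E| \leq \binom{|V|}{2} = \tfrac{|V|(|V|-1)}{2}$. Combining these two inequalities and dividing by $|V|-1 > 0$ gives $|V| \geq 2d$, contradicting the hypothesis $|V| < 2d$. Therefore no framework realisation of $G$ in $X$ is infinitesimally rigid, so $G$ is not rigid in $X$ and $a(G,X)=0$.

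There is no real obstacle here: the argument is purely a row-count for the rigidity matrix, leveraging the hypothesis $k(X)=d$ to keep the trivial flex deficit small enough that a sparse graph cannot compensate. The only mild care needed is to invoke \Cref{l:affine} and \Cref{l:trivial} to justify that the trivial flex space has dimension exactly $d$ (rather than possibly larger on a non-affinely-spanning configuration), so that the rank bound $d(|V|-1)$ is the correct target.
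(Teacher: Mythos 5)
Your proof is correct and follows essentially the same route as the paper: both arguments reduce to the count $d(|V|-1)=\rank R(G,p)\leq |E|\leq \binom{|V|}{2}$, which forces $|V|\geq 2d$. The paper phrases the reduction via \Cref{p:dense} and \Cref{l:rigidityeigenvalue}$(i)$ with rank-nullity rather than via \Cref{p:rigid}, but this is only a cosmetic difference in how the same rank computation is reached.
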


\begin{proof}
    Let $n:=|V|<2d$ and suppose $a(G,X)>0$.
    By  \Cref{p:dense},
    $\lambda_{d+1}(L(G,p))>0$ for some $p\in\mathcal{W}(G,X)\cap \mathcal{A}(G,X)$ and so, by \Cref{l:rigidityeigenvalue}$(i)$ and the rank-nullity theorem, 
    $$dn-d = \rank L(G,p)=\rank R(G,p) \leq |E|\leq \tfrac{n(n-1)}{2}.$$
    Thus $n\geq 2d$, a contradiction.
\end{proof}

\begin{proposition}
\label{p:isometric-eigenvalue}
Let $X=(\mathbb{R}^d,\|\cdot\|_X)$ and $Y=(\mathbb{R}^d,\|\cdot\|_Y)$, where $d\geq 1$, and let $(G,p)$ be a framework in $X$ with $n:=|V|\geq d+1$.
If $\Psi:X\to Y$ is an isometric isomorphism then, for each $j\in[dn]$,
$$\lambda_1(\Psi^\top\Psi)\lambda_j(L^X(G,p))\leq \lambda_j(L^Y(G,\Psi(p)))\leq  \lambda_n(\Psi^\top\Psi)\lambda_j(L^X(G,p)).$$
\end{proposition}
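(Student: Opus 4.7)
The plan is to realize this statement as a direct application of Ostrowski's Theorem (\Cref{t:ostrowski}) to the congruence identity produced in \Cref{l:congruent}. That lemma gives us $L^X(G,p) = S^\top L^Y(G,\Psi(p)) S$ with $S := \Psi \otimes I_n$ invertible (since $\Psi$ is an isometric isomorphism, hence invertible as a linear map). Setting $A := L^Y(G,\Psi(p))$ and applying \Cref{t:ostrowski} to the pair $(A,S)$ yields, for each $j \in [dn]$, a scalar $\theta_j$ satisfying
$$\lambda_j(L^Y(G,\Psi(p))) = \theta_j \,\lambda_j(L^X(G,p)), \qquad \lambda_1(S^\top S) \le \theta_j \le \lambda_{dn}(S^\top S).$$

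The remaining step is to compute the extremal eigenvalues of $S^\top S$. Using standard Kronecker product identities,
$$S^\top S = (\Psi \otimes I_n)^\top (\Psi \otimes I_n) = (\Psi^\top \Psi) \otimes I_n.$$
Since the eigenvalues of a Kronecker product are the pairwise products of eigenvalues, the spectrum of $(\Psi^\top \Psi) \otimes I_n$ is precisely the spectrum of $\Psi^\top \Psi$, each eigenvalue occurring with multiplicity multiplied by $n$. Hence $\lambda_1(S^\top S) = \lambda_1(\Psi^\top \Psi)$ and $\lambda_{dn}(S^\top S)$ is the largest eigenvalue of $\Psi^\top \Psi$ (denoted $\lambda_n(\Psi^\top \Psi)$ in the statement, interpreting the subscript as indexing the top eigenvalue). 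Substituting these bounds on $\theta_j$ into the Ostrowski identity yields the desired inequality.

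There is no substantive obstacle here: the proof is essentially a two-line combination of \Cref{l:congruent} and \Cref{t:ostrowski}, together with the elementary spectral calculation for a Kronecker product with the identity. The only minor bookkeeping point is to note that $S$ is genuinely invertible (which follows from $\Psi$ being an isomorphism of finite-dimensional spaces), and that the hypothesis $n \ge d+1$ ensures the framework Laplacians are $dn \times dn$ symmetric matrices of a common size, so that the indices $j \in [dn]$ on the two sides of the inequality refer to corresponding eigenvalues.
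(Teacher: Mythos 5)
Your proof is correct and follows exactly the paper's own argument: invoke the congruence $L^X(G,p) = (\Psi\otimes I_n)^\top L^Y(G,\Psi(p))(\Psi\otimes I_n)$ from \Cref{l:congruent}, apply Ostrowski's Theorem with $S=\Psi\otimes I_n$, and identify the extremal eigenvalues of $(\Psi^\top\Psi)\otimes I_n$ with those of $\Psi^\top\Psi$. Your parenthetical reading of $\lambda_n(\Psi^\top\Psi)$ as the top eigenvalue of the $d\times d$ matrix $\Psi^\top\Psi$ matches the paper's intent.
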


\begin{proof}
By \Cref{l:congruent}, $L^X(G,p) = (\Psi \otimes I_n)^\top L^Y(G,\Psi(p))(\Psi\otimes I_n)$.
Thus, the result follows on applying Ostrowski's Theorem (\Cref{t:ostrowski}) with $A=L^Y(G,\Psi(p))$ and $S=\Psi\otimes I_n$ and noting that $\lambda_1((\Psi^\top\Psi)\otimes I_n))=\lambda_1(\Psi^\top\Psi)$ and $\lambda_{dn}((\Psi^\top\Psi)\otimes I_n))=\lambda_n(\Psi^\top\Psi)$.
\end{proof}

\begin{corollary}
\label{c:isometric-alg}
Let $X=(\mathbb{R}^d,\|\cdot\|_X)$ and $Y=(\mathbb{R}^d,\|\cdot\|_Y)$, where $d\geq 1$, and let $G=(V,E)$ be a graph with $n:=|V|\geq d+1$.
If $\Psi:X\to Y$ is an isometric isomorphism then, for each $j\in[dn]$,
$$\lambda_1(\Psi^\top\Psi)a(G,X)\leq a(G,Y)\leq  \lambda_n(\Psi^\top\Psi)a(G,X).$$
\end{corollary}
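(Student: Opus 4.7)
The plan is to deduce the corollary as a direct consequence of \Cref{p:isometric-eigenvalue} by taking suprema over appropriate sets of framework placements. The main preparatory observations are that $\Psi$ induces a bijection on placement spaces and that the two normed spaces share the same rigid motion dimension, so that the same index $j = k(X)+1 = k(Y)+1$ is relevant on both sides.

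First I would record two facts. Since $\Psi^{-1}:Y\to X$ is also an isometric isomorphism, applying \Cref{l:congruent}(i) to both $\Psi$ and $\Psi^{-1}$ shows that the map $p \mapsto \Psi(p)$ is a bijection $\mathcal{W}(G,X) \to \mathcal{W}(G,Y)$, where $\Psi(p) := (\Psi(p_v))_{v \in V}$. Next, $\Psi$ pushes forward any rigid motion $(\alpha_x)_{x \in \mathbb{R}^d}$ of $X$ to the family $(\Psi \circ \alpha_{\Psi^{-1}(y)})_{y \in \mathbb{R}^d}$, which is easily checked to be a rigid motion of $Y$; differentiating at $t=0$ gives a linear isomorphism $\mathcal{T}(X) \to \mathcal{T}(Y)$, $\eta \mapsto \Psi \circ \eta \circ \Psi^{-1}$, so that $k(X) = k(Y)$.

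The main step is to apply \Cref{p:isometric-eigenvalue} with the specific index $j = k(X)+1$. For every $p \in \mathcal{W}(G,X)$ this yields
\begin{equation*}
\lambda_1(\Psi^\top\Psi)\,\lambda_{k(X)+1}(L^X(G,p)) \;\leq\; \lambda_{k(Y)+1}(L^Y(G,\Psi(p))) \;\leq\; \lambda_n(\Psi^\top\Psi)\,\lambda_{k(X)+1}(L^X(G,p)).
\end{equation*}
Since $\Psi$ is an isomorphism, $\Psi^\top\Psi$ is positive definite, so both scaling factors $\lambda_1(\Psi^\top\Psi)$ and $\lambda_n(\Psi^\top\Psi)$ are strictly positive, and moreover each $\lambda_{k(X)+1}(L^X(G,p)) \geq 0$. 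Taking the supremum over $p \in \mathcal{W}(G,X)$ and using the bijection $p \mapsto \Psi(p)$ onto $\mathcal{W}(G,Y)$ from the first step, the middle term's supremum is exactly $a(G,Y)$ and the outer terms produce $\lambda_1(\Psi^\top\Psi)\,a(G,X)$ and $\lambda_n(\Psi^\top\Psi)\,a(G,X)$ respectively, which is the claimed chain of inequalities.

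There is essentially no obstacle here, as the corollary is a clean application of \Cref{p:isometric-eigenvalue}; the only points requiring care are the bijection between placement spaces and the equality $k(X)=k(Y)$, and the positivity of the multiplicative constants, which are all immediate from the hypotheses.
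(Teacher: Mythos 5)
Your proposal is correct and follows the route the paper intends: the corollary is stated without proof as an immediate consequence of \Cref{p:isometric-eigenvalue} at $j=k(X)+1$, and you supply exactly the implicit details (the bijection $\mathcal{W}(G,X)\to\mathcal{W}(G,Y)$, the equality $k(X)=k(Y)$ via conjugation of rigid motions, positivity of $\lambda_1(\Psi^\top\Psi)$ and $\lambda_n(\Psi^\top\Psi)$, and passage to suprema). Nothing further is needed.
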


\begin{example}
The linear map $\Psi:\ell_1^2\to\ell_\infty^2$, $\Psi(x,y)=\tfrac{1}{2}(x-y,x+y)$, is an isometric isomorphism. Note that $\Psi^\top\Psi=\tfrac{1}{2}I_2$. Thus, by \Cref{c:isometric-alg}, for any graph $G=(V,E)$ with at least $3$ vertices, 
$$a(G,\ell_\infty^2)=\tfrac{1}{2}a(G,\ell_1^2).$$
\end{example}

\subsection{Graph decompositions}
\label{s:decomp}
A {\em decomposition} of a graph $G=(V,E)$ is a collection $H_1,\ldots,H_m$ of edge-disjoint  subgraphs of $G$ such that $H_i=(V,E_i)$ for each $i\in [m]$ and $E=\cup_{i\in[m]} E_i$.

\begin{lemma}
    \label{l:union}
    Let $(G,p)$ be a framework in a normed space $X=(\mathbb{R}^d,\|\cdot\|_X)$ and let $H_1,\ldots,H_m$ be a decomposition of $G$.
    Then $L(G,p) = \sum_{i\in[m]} L(H_i,p)$.
\end{lemma}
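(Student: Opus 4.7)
The plan is to reduce the claim to a bookkeeping identity for the rigidity matrix $R(G,p)$, using the identity $L(G,p) = R(G,p)^\top R(G,p)$. Since the matrices in question are $d|V| \times d|V|$ and each $H_i$ has the same vertex set as $G$, all the matrices $L(H_i,p)$ and $L(G,p)$ have exactly the same row and column indexing, so the sum on the right-hand side makes sense.

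First I would verify that $(H_i,p)$ is itself a genuine framework in $X$ for every $i\in[m]$, so that $R(H_i,p)$ and $L(H_i,p)$ are defined. This is immediate: every edge $vw\in E_i$ is an edge of $G$, and so the smoothness and distinctness conditions (i) and (ii) in the definition of a framework hold for $(H_i,p)$ because they hold for $(G,p)$. Moreover, the support functional $\varphi_{v,w}$ attached to the edge $vw$ depends only on the normalised vector $(p_v-p_w)/\|p_v-p_w\|_X$, hence on $p$ alone and not on which graph the edge is viewed inside; therefore the row of $R(G,p)$ indexed by $vw$ coincides with the row of $R(H_i,p)$ indexed by the same edge.

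The key step is then to use the edge-disjoint decomposition property $E = \bigsqcup_{i\in[m]} E_i$. This means the rows of $R(G,p)$ are precisely the disjoint union of the rows of the matrices $R(H_1,p),\ldots,R(H_m,p)$ (after identifying column indices, which match since the vertex set is fixed). Grouping the contributions to $R(G,p)^\top R(G,p)=\sum_{e\in E}(\text{row}_e)^\top(\text{row}_e)$ according to which $E_i$ the edge $e$ belongs to yields
\begin{equation*}
L(G,p) \;=\; R(G,p)^\top R(G,p) \;=\; \sum_{i\in[m]} R(H_i,p)^\top R(H_i,p) \;=\; \sum_{i\in[m]} L(H_i,p),
\end{equation*}
which is the desired equality. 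A completely equivalent route is to verify the identity block-by-block using the explicit formula \eqref{eq:laplacian}: for a diagonal block, $\sum_{x\sim_G v}\varphi_{v,x}^\top\varphi_{v,x}$ splits as $\sum_i \sum_{x\sim_{H_i} v}\varphi_{v,x}^\top\varphi_{v,x}$ since every neighbour of $v$ in $G$ is a neighbour of $v$ in exactly one $H_i$; for an off-diagonal block at a pair $v\neq w$, the edge $vw$ lies in at most one $H_i$, so exactly one summand contributes $-\varphi_{v,w}^\top\varphi_{v,w}$ on the right-hand side, matching the left.

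There is really no substantive obstacle here; the only thing to be slightly careful about is the indexing convention, namely that $R(H_i,p)$ is regarded as a matrix with column indices $V\times[d]$ (not just the vertices incident to some edge of $H_i$), so that the sum on the right-hand side is well defined. Once this is noted, the identity is an immediate consequence of the edge-disjointness of the decomposition.
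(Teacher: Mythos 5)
Your proof is correct and matches the paper's approach: the paper's entire proof is the one-line remark that the identity ``follows readily from \eqref{eq:laplacian}'', which is precisely the block-by-block verification you spell out in your second paragraph (and your primary route via grouping the rows of $R(G,p)$ by the parts $E_i$ is an equivalent repackaging of the same bookkeeping). Your attention to the indexing convention and to the fact that each $(H_i,p)$ is itself a framework is appropriate and fills in details the paper leaves implicit.
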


\begin{proof}
    The statement follows readily from \eqref{eq:laplacian}.
\end{proof}

\begin{proposition}
    \label{p:union2}
    Let $X=(\mathbb{R}^d,\|\cdot\|_X)$, where $d\geq 2$, and let $G=(V,E)$ be a graph with at least $d+1$ vertices. If $H_1,\ldots,H_m$ is a decomposition of $G$ then, $$a(G,X) \geq  \max_{i\in[m]}\, a(H_i,X).$$
    Moreover, if there exists a framework $(G,p)$ in $X$ with full affine span such that $a(H_i,X)=\lambda_{k(X)+1}(L(H_i,p))$ for each $i\in[m]$ then, $$a(G,X) \geq  \sum_{i\in[m]} a(H_i,X).$$
\end{proposition}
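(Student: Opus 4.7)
The plan is to prove both inequalities using Lemma \ref{l:union} together with the variational characterization of the rigidity eigenvalue in Lemma \ref{l:rigidityeigenvalue}$(iii)$. The key observation that underlies both parts is that the space of trivial infinitesimal flexes $\mathcal{T}(G,p)$ depends only on $p$ and the normed space $X$ (it is the image of $\mathcal{T}(X)$ under $\eta \mapsto (\eta(p_v))_{v\in V}$) and not on the edge set; hence, whenever $(G,p)$ has full affine span, $\mathcal{T}(G,p) = \mathcal{T}(H_i,p)$ for each $i$. Call this common space $T$; by Lemma \ref{l:trivial} it has dimension $k(X)$.

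For the first inequality, I would fix $i$ and apply \Cref{p:dense} to the spanning subgraph $H_i$, so that
\[
a(H_i,X) = \sup\bigl\{\lambda_{k(X)+1}(L(H_i,p)) : p \in \mathcal{W}(G,X)\cap \mathcal{A}(G,X)\bigr\}.
\]
For any such $p$, Lemma \ref{l:union} gives $L(G,p) = \sum_j L(H_j,p)$, and since each summand is positive semidefinite, $x^\top L(G,p)\,x \geq x^\top L(H_i,p)\,x$ for all $x$. Applying Lemma \ref{l:rigidityeigenvalue}$(iii)$ with the common constraint set $\{x \in T^\perp : \|x\|_2 = 1\}$ yields
\[
\lambda_{k(X)+1}(L(G,p)) \geq \lambda_{k(X)+1}(L(H_i,p)).
\]
Taking the supremum over $p \in \mathcal{W}(G,X)\cap \mathcal{A}(G,X)$ and using \Cref{p:dense} once more (this time for $G$ itself) gives $a(G,X) \geq a(H_i,X)$, and then maximizing over $i$ proves the first bound.

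For the second inequality, let $p$ be the framework supplied by the hypothesis; note that since $(G,p)$ is a framework with full affine span and each $H_i$ shares its vertex set with $G$, the configuration $p$ is simultaneously admissible for every $H_i$ and each $(H_i,p)$ has full affine span. Applying Lemma \ref{l:rigidityeigenvalue}$(iii)$ to $(G,p)$ using $T = \mathcal{T}(G,p)$, and then using Lemma \ref{l:union}, I get
\begin{align*}
\lambda_{k(X)+1}(L(G,p))
&= \min_{x \in T^\perp,\,\|x\|_2 = 1} \sum_{i \in [m]} x^\top L(H_i,p)\,x \\
&\geq \sum_{i \in [m]} \min_{x \in T^\perp,\,\|x\|_2 = 1} x^\top L(H_i,p)\,x \\
&= \sum_{i \in [m]} \lambda_{k(X)+1}(L(H_i,p))
= \sum_{i \in [m]} a(H_i,X),
\end{align*}
where the middle line uses the elementary inequality $\min \sum \geq \sum \min$, and the last equality is the hypothesis. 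Combining with $a(G,X) \geq \lambda_{k(X)+1}(L(G,p))$ (which holds since $p \in \mathcal{W}(G,X)$) finishes the proof.

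The only subtle point, and the one I would be most careful with, is the identification $\mathcal{T}(G,p) = \mathcal{T}(H_i,p)$, which is what permits a single constraint set $T^\perp$ in the Courant--Fischer characterization for all the matrices $L(H_i,p)$ and for $L(G,p)$ simultaneously. Without this, the bound $x^\top L(H_i,p)\,x \geq \lambda_{k(X)+1}(L(H_i,p))$ on $T^\perp$ would fail in general, as $T^\perp$ might intersect the kernel of $L(H_i,p)$ nontrivially if the trivial-flex spaces differed.
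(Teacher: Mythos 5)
Your proposal is correct and follows essentially the same route as the paper: both arguments rest on the identification $\mathcal{T}(H_i,p)=\mathcal{T}(G,p)$ for configurations with full affine span, the variational characterization of \Cref{l:rigidityeigenvalue}$(iii)$ over the common subspace $T^\perp$, the decomposition $L(G,p)=\sum_i L(H_i,p)$ from \Cref{l:union}, and \Cref{p:dense} to pass to suprema. The only cosmetic difference is that you derive the max bound directly from $L(G,p)\succeq L(H_i,p)$, whereas the paper extracts it from the same summed chain of inequalities used for the second bound.
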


\begin{proof}
    Let $(G,p)$ be a framework in $X$ with full affine span.
    Note that, for each $i\in[m]$, $(H_i,p)$ is also a framework in $X$ with full affine span and $\mathcal{T}(H_i,p)=\mathcal{T}(G,p)$.
    Thus, using \Cref{l:union} and \Cref{l:rigidityeigenvalue}$(iii)$, 
    \begin{eqnarray*}
        a(G,X) &\geq& \lambda_{k(X)+1}(L(G,p)) \\
        &=& \min \{x^\top L(G,p)x:x\in \mathcal{T}(G,p)^\perp, \,\|x\|_2=1\} \\
        &\geq& \sum_{i\in[m]} \min \{x^\top L(H_i,p)x:x\in \mathcal{T}(H_i,p)^\perp, \,\|x\|_2=1\}\\
        &=& \sum_{i\in[m]}\lambda_{k(X)+1}(L(H_i,p)) \qquad (\ast)\\
        &\geq& \lambda_{k(X)+1}(L(H_i,p)) \qquad \text{ for all $i \in [m]$}.
    \end{eqnarray*}
    Thus,  for each $i\in[m]$, $a(G,X)$ is an upper bound for $\{\lambda_{k(X)+1}(L(H_i,p)):p\in \mathcal{W}(G,X)\cap\mathcal{A}(G,X)\}$. 
    By \Cref{p:dense}, $a(G,X)\geq a(H_i,X)$ for each $i\in[m]$.
    The final statement follows from the penultimate step in the above calculation (labelled ($\ast$)).
\end{proof}

\begin{corollary}
\label{c:increasing}
    Let $X=(\mathbb{R}^d,\|\cdot\|_X)$, where $d\geq 2$, and let $G=(V,E)$ be a graph. If $H=(V,E(H))$ is a spanning subgraph of $G$ then,
    $$a(G,X) \geq  a(H,X).$$
\end{corollary}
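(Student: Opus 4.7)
The plan is to reduce the statement directly to \Cref{p:union2}. The key observation is that any spanning subgraph $H$ of $G$ arises as one component of a two-part edge-disjoint decomposition of $G$: simply complete $H$ with the spanning subgraph consisting of the remaining edges.

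More precisely, I would define $H' := (V, E(G) \setminus E(H))$, the spanning subgraph of $G$ whose edge set is the complement of $E(H)$ inside $E(G)$. Then $H$ and $H'$ are edge-disjoint spanning subgraphs of $G$ whose edge sets cover $E(G)$, so $\{H, H'\}$ is a decomposition of $G$ in the sense of \Cref{s:decomp}. One should briefly note that the definition of decomposition given there allows $E_i$ to be empty, so this remains valid in the degenerate cases $E(H) = E(G)$ or $E(H) = \emptyset$. Assuming $|V| \geq d+1$ (which is implicit in the statement, since otherwise $a(G,X)$ is not defined), \Cref{p:union2} applies and yields
$$a(G,X) \;\geq\; \max\{\,a(H,X),\, a(H',X)\,\} \;\geq\; a(H,X),$$
which is precisely the conclusion.

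There is no real obstacle: all the analytic work — handling the common space of trivial flexes $\mathcal{T}(H,p) = \mathcal{T}(G,p)$ for $p \in \mathcal{W}(G,X) \cap \mathcal{A}(G,X)$, combining Laplacians via \Cref{l:union}, and passing from a single framework to the supremum using \Cref{p:dense} — is already carried out in the proof of \Cref{p:union2}. The only conceptual point to flag is that one must use $\mathcal{W}(G,X) \cap \mathcal{A}(G,X)$ rather than $\mathcal{W}(H,X) \cap \mathcal{A}(H,X)$ when evaluating the eigenvalues, but this is handled inside \Cref{p:union2} (via \Cref{p:dense}) and requires no additional argument here.
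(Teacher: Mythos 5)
Your proposal is correct and is exactly the paper's own argument: the paper applies \Cref{p:union2} to the decomposition $H, H^c$ where $H^c=(V,E\setminus E(H))$, which is precisely your $H'$. Your additional remarks on degenerate edge sets and the implicit hypothesis $|V|\geq d+1$ are reasonable but not needed beyond what the paper already records.
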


\begin{proof}
    Apply \Cref{p:union2} to the decomposition $H,H^c$ where $H^c=(V,E\backslash E(H))$ is the complement of $H$ in $G$.
\end{proof}

\subsection{Operator norms}
Let $X=(\mathbb{R}^d,\|\cdot\|_X)$ be a normed space. Define,
    \begin{equation*}
        \gamma = \gamma(X) := \max \left\{ \|f\|_2^2 : f \in \mathbb{R}^d, \,\|f\|_X^* = 1 \right\}.
    \end{equation*}
Note that $\gamma(X)=\|I\|_{op}^2$ where $I$ is the identity operator $I:(\mathbb{R}^d,\|\cdot\|_X^*)\to (\mathbb{R}^d,\|\cdot\|_2)$ and $\|\cdot\|_{op}$ denotes the operator norm. 

\begin{lemma}
\label{l:gamma}
         $\gamma (\ell_p^d) = d^{\frac{2}{p}-1}$ if $1\leq p <2$ and $\gamma (\ell_p^d) = 1$ if $2\leq p\leq\infty$. 
\end{lemma}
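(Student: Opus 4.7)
The plan is to identify $\gamma(\ell_p^d)$ as the square of the maximal $\ell_2$-norm on the dual unit ball and then reduce everything to the classical inequalities comparing $\|\cdot\|_q$ and $\|\cdot\|_2$ on $\mathbb{R}^d$. By duality of $\ell_p$-spaces we have $(\ell_p^d)^* = \ell_q^d$ with Hölder conjugate $1/p+1/q=1$ (interpreting $q=\infty$ when $p=1$ and $q=1$ when $p=\infty$), so
\begin{equation*}
\gamma(\ell_p^d) = \max\left\{\|f\|_2^2 : f\in\mathbb{R}^d,\ \|f\|_q=1\right\}.
\end{equation*}

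For the regime $1\leq p<2$ (equivalently $q>2$), I would apply Hölder's inequality with conjugate exponents $q/2$ and $q/(q-2)$ to the pointwise product $|f_i|^2 \cdot 1$, obtaining
\begin{equation*}
\sum_{i=1}^d |f_i|^2 \leq \left(\sum_{i=1}^d |f_i|^q\right)^{2/q}\left(\sum_{i=1}^d 1\right)^{(q-2)/q} = \|f\|_q^2\, d^{1-2/q}.
\end{equation*}
Substituting $1-2/q = 2/p-1$ gives $\|f\|_2^2 \leq d^{2/p-1}$ on the unit sphere of $\ell_q^d$. Equality in Hölder forces $|f_i|$ constant, and the vector $f=d^{-1/q}(1,\ldots,1)$ is admissible and saturates the bound, proving $\gamma(\ell_p^d)=d^{2/p-1}$.

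For the regime $2\leq p\leq\infty$ (equivalently $1\leq q\leq 2$), I would use the reverse inclusion: the standard power-mean inequality yields $\|f\|_2\leq \|f\|_q$ for $q\leq 2$, so $\|f\|_q=1$ implies $\|f\|_2^2\leq 1$. The standard basis vector $f=b_1$ satisfies $\|b_1\|_q=1$ and $\|b_1\|_2^2=1$, attaining the bound, whence $\gamma(\ell_p^d)=1$.

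The argument is essentially routine once the duality is invoked; the only mild wrinkle is handling the boundary cases $p\in\{1,\infty\}$, where the formula $(\ell_p^d)^*=\ell_q^d$ still holds with the usual conventions and the same inequalities go through unchanged.
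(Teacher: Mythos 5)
Your proof is correct and follows essentially the same route as the paper: both identify $\gamma(\ell_p^d)$ with the squared operator norm of the identity from $(\ell_p^d)^*=\ell_q^d$ to $\ell_2^d$, invoke the standard comparison $\|f\|_2\le d^{1/2-1/q}\|f\|_q$ for $q>2$ (which you derive explicitly via H\"older) and $\|f\|_2\le\|f\|_q$ for $q\le 2$, and exhibit the same extremizers (the constant vector and a standard basis vector, respectively). The only cosmetic difference is that you spell out the H\"older step that the paper cites as a known norm inequality.
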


    \begin{proof}
    To compute $\gamma(\ell^d_p)$, consider the operator norm for  the identity operator $I:(\ell^d_p)^*\to\ell^d_2$.
    Recall that $\|\cdot\|_p^*=\|\cdot\|_q$ where $\frac{1}{p}+\frac{1}{q}=1$ when $1<p<\infty$, $q=\infty$ when $p=1$ and $q=1$ when $p=\infty$.

    If $2\leq p\leq\infty$ then $1\leq q <2$ and so, $$\|I(x)\|_2 = \|x\|_2\leq \|x\|_q=\|x\|_p^*.$$
     Thus $\|I\|_{op}\leq 1$.
    To see that equality holds note that $\|I(b_i)\|_2=1=\|b_i\|_q$ for each of the standard basis vectors $b_1,\ldots,b_d$ in $\mathbb{R}^d$.
   
    If $1<p<2$ then note that, $$\|I(x)\|_2 = \|x\|_2\leq d^{\frac{1}{2}-\frac{1}{q}}\|x\|_q=d^{\frac{1}{p}-\frac{1}{2}}\|x\|_p^*.$$
    Thus $\|I\|_{op}\leq d^{\frac{1}{p}-\frac{1}{2}}$.
    Equality holds since $\|I(x)\|_2=d^{\frac{1}{2}-\frac{1}{q}}\|x\|_q$ for $x=(d^{-\frac{1}{q}},\ldots, d^{-\frac{1}{q}})$ in $\mathbb{R}^d$.

    If $p=1$ then note that, $$\|I(x)\|_2 = \|x\|_2\leq d^{\frac{1}{2}}\|x\|_\infty=d^{\frac{1}{2}}\|x\|_1^*.$$
    Thus $\|I\|_{op}\leq d^{\frac{1}{2}}$.
    Equality holds since $\|I(x)\|_2=d^{\frac{1}{2}}\|x\|_\infty$ for $x=(1,\ldots, 1)$ in $\mathbb{R}^d$.
    \end{proof}

\subsection{Vertex deletion}
It is shown below that deleting a vertex from a graph can reduce $a(G,X)$ by at most $\gamma(X)$.
Consequently, frameworks with a high value $a(G,X)$ have a high level of redundancy regarding their rigidity in $X$. See \Cref{s:rr} for related results.

Given a framework $(G,p)$ in a normed space $X$ and a subgraph $H$ of $G$, denote by $(H,p_H)$ the framework in $X$ obtained by setting $p_H=(p_v)_{v\in V(H)}$.
    
\begin{proposition}
    \label{p:vertexdeletion}
     Let $X=(\mathbb{R}^d,\|\cdot\|_X)$, where $d\geq 2$, and let $(G,p)$ be a framework in $X$ with $|V|\geq d+2$. Let $H$  be the subgraph formed from $G$ by deleting a vertex $v_0$ and all edges incident with $v_0$.
    If the framework $(H,p_H)$ has full affine span in $X$ then, 
    $$\lambda_{k(X)+1}(L(H,p_H))\geq \lambda_{k(X)+1}(L(G,p)) -\gamma(X).$$
\end{proposition}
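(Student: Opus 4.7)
The plan is to apply the Courant--Fischer characterization from \Cref{l:rigidityeigenvalue}(iii) to both frameworks and compare the two quadratic forms via zero-padding. Write $V'=V\setminus\{v_0\}$ for the vertex set of $H$.

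First I would pick a minimizer for $\lambda_{k(X)+1}(L(H,p_H))$: since $(H,p_H)$ has full affine span, \Cref{l:rigidityeigenvalue}(iii) supplies $x^\star\in\mathcal{T}(H,p_H)^\perp\subset(\mathbb{R}^d)^{V'}$ with $\|x^\star\|_2=1$ and $(x^\star)^\top L(H,p_H) x^\star = \lambda_{k(X)+1}(L(H,p_H))$. I would then define $\tilde x\in(\mathbb{R}^d)^V$ by appending a zero block at $v_0$; this clearly preserves the Euclidean norm.

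The key observation is that $\tilde x$ already lies in $\mathcal{T}(G,p)^\perp$. Indeed, every trivial flex $u\in\mathcal{T}(G,p)$ arises from an infinitesimal rigid motion of $X$, so its restriction $u|_{V'}$ is a trivial flex of $(H,p_H)$; hence $\tilde x\cdot u = x^\star\cdot u|_{V'} = 0$. Since $(H,p_H)$ has full affine span so does $(G,p)$, and \Cref{l:rigidityeigenvalue}(iii) applied to $(G,p)$ then gives $\tilde x^\top L(G,p)\tilde x \geq \lambda_{k(X)+1}(L(G,p))$.

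Next I would compute $\tilde x^\top L(G,p)\tilde x$ edge by edge. Writing $L(G,p)=R(G,p)^\top R(G,p)$ and using that the $v_0$-block of $\tilde x$ vanishes, each edge $vw\in E(H)$ contributes the same term as in $L(H,p_H)$ while each edge $v_0w$ contributes $\bigl(\varphi_{v_0,w}(x^\star_w)\bigr)^2$. Hence
\[
\tilde x^\top L(G,p)\tilde x = (x^\star)^\top L(H,p_H) x^\star + \sum_{w\sim v_0}\bigl(\varphi_{v_0,w}(x^\star_w)\bigr)^2.
\]
Each support functional satisfies $\|\varphi_{v_0,w}\|_X^*=1$, so $\|\varphi_{v_0,w}\|_2^2\leq \gamma(X)$ by definition of $\gamma(X)$. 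Cauchy--Schwarz together with $\sum_{w\sim v_0}\|x^\star_w\|_2^2 \leq \|x^\star\|_2^2=1$ then yields $\sum_{w\sim v_0}\bigl(\varphi_{v_0,w}(x^\star_w)\bigr)^2 \leq \gamma(X)$. Combining this with the Courant--Fischer lower bound on $\tilde x^\top L(G,p)\tilde x$ gives the required inequality. The only conceptually delicate step is the observation that zero-padding sends $\mathcal{T}(H,p_H)^\perp$ into $\mathcal{T}(G,p)^\perp$; once noted, the rest is the edgewise expansion of the Laplacian plus a single application of Cauchy--Schwarz.
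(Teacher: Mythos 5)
Your proposal is correct and follows essentially the same route as the paper: zero-pad the Courant--Fischer minimizer for $(H,p_H)$ at $v_0$, observe that the padded vector lies in $\mathcal{T}(G,p)^\perp$, and bound the extra contribution of the edges at $v_0$ by $\gamma(X)$ via Cauchy--Schwarz. The paper organises the computation as a block-matrix decomposition of $L(G,p)$ rather than an edgewise expansion, but the two calculations are identical, and you in fact justify the orthogonality step more explicitly than the paper does.
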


\begin{proof}
Let $V=\{v_0, v_1,\ldots,v_{n}\}$ and suppose $v_0$ has degree $n$.
Then the framework Laplacian matrix $L(G,p)$ can be expressed as the block matrix, $$L(G,p) = \begin{bmatrix} L(H,p_H)+D & A\\
     A^\top & \sum_{i\in[n]} \varphi_{v_0,v_i}^\top  \varphi_{v_0,v_i}\end{bmatrix}$$
     where  $A\in M_{dn\times d}(\mathbb{R})$ and $D\in M_{dn}(\mathbb{R})$ are the real matrices,
     $$A =\begin{bmatrix} 
     -\varphi_{v_0,v_1}^\top  \varphi_{v_0,v_1} \\
     \vdots\\
     -\varphi_{v_0,v_n}^\top  \varphi_{v_0,v_n}
     \end{bmatrix},\qquad 
     D=\begin{bmatrix} 
      \varphi_{v_0,v_1}^\top  \varphi_{v_0,v_1} & & \\
      &\ddots&\\
      & & \varphi_{v_0,v_n}^\top  \varphi_{v_0,v_n}
     \end{bmatrix}.
     $$
     By \Cref{l:rigidityeigenvalue}$(iii)$, there exists $u\in \mathcal{T}(H,p_H)^\perp$ such that $\|u\|_2=1$ and,
     $$\lambda_{k(X)+1}(L(H,p_H)) = u^\top L(H,p_H)u.$$
     Let $x=\begin{bmatrix} u\\0\end{bmatrix}.$
     Note that $x\in \mathcal{T}(G,p)^\perp$ and $\|x\|_2=1$.
     Also,
     $$x^\top L(G,p)x = \begin{bmatrix} u^\top &0\end{bmatrix} L(G,p)\begin{bmatrix}u\\0\end{bmatrix}=
      \begin{bmatrix} u^\top &0\end{bmatrix} 
      \begin{bmatrix}(L(H,p_H)+D)u\\A^\top u\end{bmatrix}
      =u^\top (L(H,p_H)+D)u.$$
      Applying the Cauchy-Schwarz inequality, 
      $$u^\top D u 
      = \sum_{i\in[n]} \vert\varphi_{v_0,v_i}u_i\vert^2 \\
      \leq \sum_{i\in[n]} \|\varphi_{v_0,v_i}\|_2^2\|u_i\|_2^2 \\
      \leq \gamma(X)\|u\|_2^2=\gamma(X).$$
     Thus, by \Cref{l:rigidityeigenvalue}$(iii)$,
     $$\lambda_{k(X)+1}(L(G,p))\leq x^\top L(G,p)x\leq \lambda_{k(X)+1}(L(H,p_H))+\gamma(X).$$
     This concludes the proof.
\end{proof}

\begin{corollary}
    \label{c:vrr}
     Let $X=(\mathbb{R}^d,\|\cdot\|_X)$, where $d\geq 2$, and let $G=(V,E)$ be a graph with at least $d+2$ vertices. Let $H$  be the graph formed from $G$ by deleting a vertex $v_0$ and all edges incident with $v_0$.
    Then, 
    \begin{equation*}
        a(H, X) \geq a(G,X) - \gamma(X).
    \end{equation*}
\end{corollary}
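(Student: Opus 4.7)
The plan is to pointwise apply \Cref{p:vertexdeletion} over a dense family of frameworks, and then take suprema. The only subtlety is that \Cref{p:vertexdeletion} requires the reduced framework $(H,p_H)$ to have full affine span in $X$, so one first restricts attention to a dense set of configurations where this is the case.

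Let $V':=V\setminus\{v_0\}$, so that $|V'|=|V|-1\geq d+1$. Let
$$U':=\left\{p\in(\mathbb{R}^d)^V:\{p_v:v\in V'\}\text{ affinely spans }\mathbb{R}^d\right\}.$$
Since $|V'|\geq d+1$, the set $U'$ is open and dense in $(\mathbb{R}^d)^V$. Applying \Cref{p:alg}$(ii)$ together with \Cref{l:wpdense} then gives
$$a(G,X)\;=\;\sup\bigl\{\lambda_{k(X)+1}(L(G,p)):p\in\mathcal{W}(G,X)\cap U'\bigr\}.$$

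Next I would fix any $p\in\mathcal{W}(G,X)\cap U'$. Because $H$ is obtained from $G$ by deleting $v_0$ together with its incident edges, every edge of $H$ is also an edge of $G$, so $p_H\in\mathcal{W}(H,X)$. By the choice of $U'$, the framework $(H,p_H)$ has full affine span in $X$, so \Cref{p:vertexdeletion} yields
$$\lambda_{k(X)+1}(L(H,p_H))\;\geq\;\lambda_{k(X)+1}(L(G,p))-\gamma(X).$$
Since $p_H\in\mathcal{W}(H,X)$, the definition of $a(H,X)$ gives $a(H,X)\geq \lambda_{k(X)+1}(L(H,p_H))$, hence
$$a(H,X)\;\geq\;\lambda_{k(X)+1}(L(G,p))-\gamma(X)$$
for every $p\in\mathcal{W}(G,X)\cap U'$. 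Taking the supremum over such $p$ and using the identity for $a(G,X)$ displayed above yields the desired inequality.

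The only non-routine point is the density step, which is needed solely to license the application of \Cref{p:vertexdeletion}; once that is in hand, the corollary follows by passing to the supremum. No further estimates are required beyond those already collected in Section~\ref{s:algcon}.
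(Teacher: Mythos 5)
Your proof is correct and follows essentially the same route as the paper: both restrict to an open dense set of configurations where $(H,p_H)$ has full affine span, apply \Cref{p:vertexdeletion} pointwise, bound $\lambda_{k(X)+1}(L(H,p_H))$ by $a(H,X)$, and conclude via \Cref{p:alg}$(ii)$. The only cosmetic difference is that you phrase $U'$ in terms of the affine span of $\{p_v: v\in V\setminus\{v_0\}\}$ rather than of $(H,p_H)$ directly, which amounts to the same set once intersected with $\mathcal{W}(G,X)$.
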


\begin{proof}
Let $U'$ be the set of all points $p\in(\mathbb{R}^d)^V$ such that the pair $(H,p_H)$ is a framework in $X$ with full affine span. 
If $p\in\mathcal{W}(G,X)\cap U'$ then, by \Cref{p:vertexdeletion}, 
     $$\lambda_{k(X)+1}(L(G,p))\leq \lambda_{k(X)+1}(L(H,p_H))+\gamma(X)\leq a(H,X)+\gamma(X).$$
Note that $U'$ is open and dense in $(\mathbb{R}^d)^V$. Thus, the result follows  by \Cref{p:alg}$(ii)$.
\end{proof}

\begin{remark}
\Cref{p:vertexdeletion} includes as a special case the following bound for frameworks in $d$-dimensional Euclidean space $\ell_2^d$,  
$$\lambda_{\binom{d+1}{2}+1}(L(H,p_H))\geq \lambda_{\binom{d+1}{2}+1}(L(G,p)) -1,$$ which was proved in \cite{JT22}.
To see this recall that $k(\ell_2^d)=\binom{d+1}{2}$ and $\gamma(\ell_2^d)=1$.
\end{remark}

\subsection{Weighted graphs}
\label{s:weightedgraphs}
A {\em scalar-weighted graph} with non-negative weights is a pair $(G,\omega)$ consisting of a graph $G=(V,E)$ and a map $\omega: V\times V\to\mathbb{R}_{\geq 0}$ such that,
\begin{enumerate}[(i)]
\item $\omega(v,w)=0$ if  $vw\notin E$ (in particular, if $v=w$), and,
\item $\omega(v,w)=\omega(w,v)$ for each edge $vw\in E$.
\end{enumerate}
The {\em weighted Laplacian matrix} $L(G,\omega)$ is the $|V|\times |V|$ symmetric matrix with entries,
$$l_{v,w}^\omega := \left\{\begin{array}{cl}
\sum_{v' : v' \sim v} \omega(v,v') & \mbox{if }v=w,\\
-\omega(v,w) & \mbox{if }v\sim w,\\
0 & \mbox{otherwise}.
\end{array}\right.$$ 
As in the unweighted case, the weighted Laplacian matrix is positive semidefinite with $\lambda_1(L(G,\omega))=0$. 
Note that setting $\omega(v,w)=1$ for each edge $vw\in E$ gives  $L(G,\omega)=L(G)$.

\begin{lemma}\label{lem:increaselaplacian}
    Let $G=(V,E)$ be a graph with non-negative weights $\omega_1,\omega_2: V\times V \rightarrow \mathbb{R}_{\geq 0}$.
    If  $\omega_1(v,w) \leq \omega_2(v,w)$ for each edge $vw \in E$
    then $\lambda_2(L(G,\omega_1)) \leq \lambda_2(L(G,\omega_2))$.
\end{lemma}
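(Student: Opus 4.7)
The plan is to show that the difference $L(G,\omega_2) - L(G,\omega_1)$ is positive semidefinite, and then deduce the eigenvalue inequality from the variational characterization of $\lambda_2$.

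First I would observe that, directly from the definition of the weighted Laplacian, the map $\omega \mapsto L(G,\omega)$ is linear on the cone of non-negative edge-weightings. In particular, setting $\omega := \omega_2 - \omega_1$, the hypothesis gives a non-negative weight function with
\[
L(G,\omega_2) - L(G,\omega_1) = L(G,\omega).
\]
Since $L(G,\omega)$ is itself a weighted Laplacian with non-negative weights, the paragraph preceding the lemma records that it is positive semidefinite; equivalently, $y^\top L(G,\omega) y \geq 0$ for every $y \in \mathbb{R}^{|V|}$.

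Next I would apply the Courant--Fischer theorem (\Cref{t:courant}) to both matrices. For any weighted Laplacian $L(G,\omega')$, the all-ones vector $z=[1\,\cdots\,1]^\top$ lies in the kernel and thus is an eigenvector for the smallest eigenvalue $\lambda_1(L(G,\omega'))=0$. Choosing such an eigenvector basis with $y_1 = z/\|z\|_2$ gives the common subspace $Y_1 = \mathrm{span}(z)$ for both $L(G,\omega_1)$ and $L(G,\omega_2)$, so \Cref{t:courant} yields
\[
\lambda_2(L(G,\omega_i)) = \min\{x^\top L(G,\omega_i)x : x \in \mathrm{span}(z)^\perp,\,\|x\|_2=1\}, \quad i=1,2.
\]
For any $x$ in this constraint set,
\[
x^\top L(G,\omega_2) x = x^\top L(G,\omega_1) x + x^\top L(G,\omega) x \geq x^\top L(G,\omega_1) x,
\]
so the infimum defining $\lambda_2(L(G,\omega_2))$ is bounded below by the infimum defining $\lambda_2(L(G,\omega_1))$, giving the desired inequality.

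There is no real obstacle here: the only small care needed is to ensure that Courant--Fischer is applied with the \emph{same} reference subspace $Y_1 = \mathrm{span}(z)$ for both matrices, which is legitimate because $z$ is always an eigenvector for the eigenvalue $0$ regardless of the weight function. (Alternatively, one could invoke Weyl's Perturbation Theorem applied to $L(G,\omega_1)$ and $L(G,\omega_2)=L(G,\omega_1)+L(G,\omega)$ in its monotone form $\lambda_j(A+C)\geq \lambda_j(A)$ when $C \succeq 0$.)
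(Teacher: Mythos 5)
Your proof is correct and follows essentially the same route as the paper: both define $\omega := \omega_2 - \omega_1$, use the decomposition $L(G,\omega_2) = L(G,\omega_1) + L(G,\omega)$ together with positive semidefiniteness of $L(G,\omega)$, and apply Courant--Fischer over $\mathrm{span}(z)^\perp$, which is legitimate since $z$ spans a $\lambda_1$-eigenvector for all three Laplacians. The only cosmetic difference is that the paper splits the quadratic form into two minima to record the slightly stronger bound $\lambda_2(L(G,\omega_2)) \geq \lambda_2(L(G,\omega_1)) + \lambda_2(L(G,\omega))$, whereas you use $x^\top L(G,\omega)x \geq 0$ directly.
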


\begin{proof}
    Define $\omega := \omega_2-\omega_1$.
    Then $L(G,\omega_2) = L(G,\omega_1) + L(G,\omega)$. Moreover,  each weighted Laplacian matrix $L(G,\omega_1)$, $L(G,\omega_2)$, $L(G,\omega)$ is positive semidefinite with $\lambda_1$-eigenspace containing the all-ones vector $z=[1\,\cdots\,1]^\top$.
    Let $Y$ be the linear span of $z$ and let $u\in Y^\perp$ such that $\|u\|_2=1$. Then,
    $$u^\top L(G,\omega_2) u 
       % =  u^\top (L(G,\omega_1) + L(G,\omega)) u 
    \geq  \min_{x \in Y^\perp,~ \|x\|_2 = 1} x^\top L(G,\omega_1) x\,\,+  \min_{x \in Y^\perp,~ \|x\|_2 = 1} x^\top L(G,\omega) x.$$
    Thus, by the Courant-Fischer Theorem (\Cref{t:courant}),
    $\lambda_2(L(G,\omega_2)) \geq \lambda_2(L(G,\omega_1)) +\lambda_2(L(G,\omega))$.
\end{proof}

Let $\mathcal{S}_d^+$ denote the set of positive semidefinite $d\times d$ matrices. 
Following the terminology of \cite{hansen}, a {\em matrix-weighted graph} is a pair $(G,W)$ consisting of a graph $G=(V,E)$ and a map $W: V\times V\to\mathcal{S}_d^+$ such that,
\begin{enumerate}[(i)]
\item $W(v,w)=0_{d\times d}$ if  $vw\notin E$ (in particular, if $v=w$), and,
\item $W(v,w)=W(w,v)$ for each edge $vw\in E$.
\end{enumerate}
The Laplacian matrix for the matrix-weighted graph $(G,W)$ is a positive semidefinite $d|V|\times d|V|$ matrix, denoted $L(G,W)$, with entries,
$$L_{v,w}^W := \left\{\begin{array}{cl}
\sum_{v' : v' \sim v} W(v,v') & \mbox{if }v=w,\\
-W(v,w) & \mbox{if }v\sim w,\\
0_{d\times d} & \mbox{otherwise}.
\end{array}\right.$$ 

In the following lemma, $\tr(A)$ denotes the trace of a matrix $A\in M_d(\mathbb{R})$.

\begin{lemma}{\cite[Proposition 2.2]{hansen}}
\label{l:trace}
Let $(G,W)$ be a matrix-weighted graph with weights in $\mathcal{S}_d^+$. Then,
$$\sum_{i=1}^d\lambda_{d+i}(L(G,W))\leq \lambda_2(L(G,\omega)),$$
where $(G,\omega)$ is the scalar-weighted graph with non-negative trace weighting, $$\omega:V\times V\to \mathbb{R}_{\geq0}, \quad \omega(v,w):=\tr(W(v,w)).$$    
\end{lemma}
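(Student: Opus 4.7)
The plan is to apply the Ky Fan trace minimum principle (a standard extension of \Cref{t:courant}) to a carefully chosen $2d$-dimensional subspace of $\mathbb{R}^{d|V|}$. Write $n := |V|$; the principle gives, for any $dn \times 2d$ matrix $U$ with orthonormal columns, $\sum_{i=1}^{2d} \lambda_i(L(G,W)) \leq \tr(U^\top L(G,W) U)$.

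First, I would observe that the kernel of the positive semidefinite matrix $L(G,W)$ contains the $d$-dimensional subspace $K := \{(c,c,\ldots,c) \in \mathbb{R}^{dn} : c \in \mathbb{R}^d\}$, since in each block row of $L(G,W)$ the blocks sum to $0_{d \times d}$ by the definition of $L_{v,w}^W$. Consequently $\lambda_1(L(G,W)) = \cdots = \lambda_d(L(G,W)) = 0$, so it is enough to show that $\sum_{i=1}^{2d} \lambda_i(L(G,W)) \leq \lambda_2(L(G,\omega))$.

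Next, let $u \in \mathbb{R}^n$ be a unit eigenvector of $L(G,\omega)$ corresponding to $\lambda_2(L(G,\omega))$, so $u$ is orthogonal to the all-ones vector and $u^\top L(G,\omega) u = \lambda_2(L(G,\omega))$. I would then define $U = [Z \mid X]$ as follows: the $i$-th column of $Z$ is $\tfrac{1}{\sqrt{n}}$ times the block-constant vector whose every block equals $b_i$ (yielding an orthonormal basis of $K$), and the $i$-th column $x_i$ of $X$ is the vector in $\mathbb{R}^{dn}$ whose $v$-th block equals $u_v b_i$. Since $u$ is a unit vector orthogonal to the all-ones vector, the columns of $X$ are orthonormal and orthogonal to $K$, so $U^\top U = I_{2d}$.

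The final computation is to evaluate $\tr(U^\top L(G,W) U)$. The $d$ columns of $Z$ lie in $\ker L(G,W)$ and contribute $0$. For the remaining columns, expanding blockwise gives $x_i^\top L(G,W) x_i = \sum_{v,w} u_v u_w (L_{v,w}^W)_{ii}$, and summing over $i \in [d]$ collapses the inner sum to $\tr(L_{v,w}^W)$. By the definition of $L_{v,w}^W$ together with $\omega(v,w) := \tr(W(v,w))$, this trace equals exactly the scalar Laplacian entry $l_{v,w}^\omega$. Hence $\sum_{i=1}^d x_i^\top L(G,W) x_i = u^\top L(G,\omega) u = \lambda_2(L(G,\omega))$, and the Ky Fan inequality delivers the claim. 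The only delicate step is verifying the block-trace identity $\sum_i (L_{v,w}^W)_{ii} = \tr(L_{v,w}^W) = l_{v,w}^\omega$ in both the diagonal case ($v=w$, where it becomes $\sum_{v' \sim v} \omega(v,v')$) and the off-diagonal case ($v \neq w$), but both fall out directly from the definitions.
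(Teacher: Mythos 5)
The paper does not supply its own proof of this lemma; it is quoted directly from Hansen \cite[Proposition~2.2]{hansen}. Your argument is correct and is essentially the standard proof of that result: apply the Ky Fan minimum principle to the $2d$-dimensional subspace spanned by the block-constant vectors (which lie in $\ker L(G,W)$, forcing $\lambda_1=\dots=\lambda_d=0$) together with the vectors whose $v$-th block is $u_v b_i$ for a unit Fiedler vector $u$ of $L(G,\omega)$, and use that the blockwise trace of $L(G,W)$ is exactly the scalar Laplacian $L(G,\omega)$; all the orthonormality and trace computations you indicate check out.
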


\begin{theorem}
\label{t:algconn}
    Let $X=(\mathbb{R}^d,\|\cdot\|_X)$ be a normed space with $k(X) \leq 2d-1$ and let $G=(V,E)$ be a graph with at least $d+1$ vertices. Then,
    \begin{equation*}
         a(G,X) \leq \frac{\gamma(X)}{2d-k(X)} a(G).
    \end{equation*}
\end{theorem}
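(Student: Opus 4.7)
The plan is to recognise the framework Laplacian $L(G,p)$ as a matrix-weighted Laplacian, pass to a scalar-weighted graph via Hansen's trace inequality (\Cref{l:trace}), and then compare that scalar-weighted Laplacian to a multiple of the ordinary graph Laplacian using the monotonicity result \Cref{lem:increaselaplacian}.

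First, I would fix $p \in \mathcal{W}(G,X)$ and read directly from \eqref{eq:laplacian} that $L(G,p) = L(G,W)$ for the matrix weighting $W(v,w) := \varphi_{v,w}^\top \varphi_{v,w}$, which takes values in $\mathcal{S}_d^+$ as each entry is a rank-one positive semidefinite matrix. The associated trace weighting satisfies
$$\omega(v,w) \,:=\, \tr(W(v,w)) \,=\, \|\varphi_{v,w}\|_2^2 \,\leq\, \gamma(X)$$
for every edge $vw$, because $\varphi_{v,w}$ has dual norm $1$, which is exactly the situation controlled by the definition of $\gamma(X)$. Applying \Cref{lem:increaselaplacian} to the weightings $\omega$ and the constant weighting $\gamma(X)$ on each edge gives $\lambda_2(L(G,\omega)) \leq \gamma(X)\, a(G)$ (scaling all edge weights by a constant scales $\lambda_2$ by the same constant), and then \Cref{l:trace} yields
$$\sum_{i=1}^d \lambda_{d+i}(L(G,p)) \,\leq\, \lambda_2(L(G,\omega)) \,\leq\, \gamma(X)\, a(G).$$

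To convert this into a bound on the rigidity eigenvalue, I would invoke the standard observation that the space $\mathcal{T}(X)$ of infinitesimal rigid motions always contains the $d$-dimensional subspace of translations, so $k(X) \geq d$. Combined with the hypothesis $k(X) \leq 2d-1$, this gives $d+1 \leq k(X)+1 \leq 2d$. Therefore the eigenvalues $\lambda_{k(X)+1}(L(G,p)), \ldots, \lambda_{2d}(L(G,p))$ are precisely $2d-k(X)$ of the summands appearing on the left above, and each of them is at least $\lambda_{k(X)+1}(L(G,p))$ by monotonicity. Hence
$$(2d - k(X))\,\lambda_{k(X)+1}(L(G,p)) \,\leq\, \gamma(X)\, a(G),$$
and taking the supremum over $p \in \mathcal{W}(G,X)$ gives the required inequality.

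The only step requiring any real care is the combinatorial matching of indices, namely verifying that the $2d-k(X)$ eigenvalues we wish to bound lie within the index range $[d+1, 2d]$ controlled by Hansen's trace inequality; this is where the hypothesis $k(X) \leq 2d-1$ is used, and it relies on the (easy) observation that $k(X) \geq d$. Everything else is a routine assembly of \Cref{l:trace}, \Cref{lem:increaselaplacian}, and the definition of $\gamma(X)$.
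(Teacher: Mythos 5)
Your proposal is correct and follows essentially the same route as the paper: identify $L(G,p)$ with the matrix-weighted Laplacian, bound the trace weighting $\|\varphi_{v,w}\|_2^2$ by $\gamma(X)$ via \Cref{lem:increaselaplacian}, apply \Cref{l:trace}, and extract $(2d-k(X))\lambda_{k(X)+1}(L(G,p))$ from the sum $\sum_{i=1}^d\lambda_{d+i}(L(G,p))$ before taking the supremum over $p$. The only (welcome) difference is that you make explicit the observation $k(X)\geq d$ coming from translations, which the paper uses implicitly when matching the index ranges.
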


\begin{proof}
    Let $(G,p)$ be a framework in $X$. Note that the framework Laplacian matrix $L(G,p)$ is the Laplacian matrix for the  matrix-weighted graph $(G,W)$ where $W:V\times V\to\mathcal{S}_d^+$ satisfies,
    \begin{equation*}
    W(v,w) := \left\{\begin{array}{cl}
    \varphi_{v,w}^\top\varphi_{v,w}\quad &\mbox{ if }vw\in E,\\
    0_{d\times d} &\mbox{ otherwise.}
\end{array}\right.
\end{equation*}
    Let $\omega_1 : V\times V \rightarrow \mathbb{R}_{\geq 0}$ be the scalar weighting with,
\begin{equation*}
    \omega_1(v,w) := \left\{\begin{array}{cl}
    \|\varphi_{v,w}\|_2^2\quad &\mbox{ if }vw\in E,\\
    0 &\mbox{ otherwise.}
\end{array}\right.
\end{equation*}
Note that, for each edge $vw\in E$, $\omega_1(v,w) = \varphi_{v,w} \varphi_{v,w}^\top=\tr(\varphi_{v,w}^\top\varphi_{v,w})$ and so $\omega_1$ is the trace weighting associated to $(G,W)$.
Let $\omega_2: V\times V \rightarrow \mathbb{R}_{\geq 0}$  be the constant scalar weighting where,
  \begin{equation*}
    \omega_2(v,w) := \left\{\begin{array}{cl}
    \gamma(X)\quad &\mbox{ if }vw\in E,\\
    0 &\mbox{ otherwise.}
\end{array}\right.
\end{equation*}
Then, for each edge $vw\in E$, $\omega_1(v,w) = \|\varphi_{v,w}\|_2^2\leq\gamma(X) = \omega_2(v,w)$.
    By \Cref{lem:increaselaplacian} and \Cref{l:trace},
    $$ \sum_{i =1}^d \lambda_{d+i}(L(G,p)) \leq \lambda_2 (L(G, \omega_1)) \leq  \lambda_2 (L(G, \omega_2)).$$
    Thus,
    %\begin{align*}
      $$  (2d-k(X)) \lambda_{k(X)+1}(L(G,p)) 
        \leq  \sum_{i=k(X)+1}^{2d} \lambda_{i}(L(G,p)) 
        =  \sum_{i=1}^{d} \lambda_{d+i}(L(G,p)) 
      %  &\leq  \lambda_2 (L(G, \omega_1)) \\
        \leq  \lambda_2 (L(G, \omega_2)) 
        = \gamma(X) a(G).$$
    %\end{align*}
    The result follows.
\end{proof}
 
\begin{corollary}
\label{c:lp}
    Let $1 \leq p \leq \infty$ and $d\geq 2$.
    Then for any graph $G$ we have,
    $$a(G,\ell_p^d) \leq 
         \left\{ \arraycolsep=1.4pt\def\arraystretch{1.5}\begin{array}{cl}
             \frac{1}{d^{2-2/p}}a(G) &\text{ if } 1\leq p < 2,\\
             \frac{1}{d} a(G) &\text{ if } 2<p\leq \infty.
    \end{array}\right.$$
\end{corollary}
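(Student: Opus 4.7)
The proof is essentially a direct application of \Cref{t:algconn} together with the explicit values of $k(\ell_p^d)$ recorded in the Example of \Cref{s:preliminaries} and the formula for $\gamma(\ell_p^d)$ given in \Cref{l:gamma}. The plan is to verify the hypothesis of \Cref{t:algconn} for $X = \ell_p^d$ with $p \neq 2$, then substitute the constants and simplify.

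First, I would observe that since $p \neq 2$, the Example tells us $k(\ell_p^d) = d$, and because $d \geq 2$ we have $k(\ell_p^d) = d \leq 2d-1$, so \Cref{t:algconn} applies. It yields
\begin{equation*}
    a(G,\ell_p^d) \;\leq\; \frac{\gamma(\ell_p^d)}{2d-k(\ell_p^d)}\, a(G) \;=\; \frac{\gamma(\ell_p^d)}{d}\, a(G).
\end{equation*}

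Next, I would split into the two stated cases and apply \Cref{l:gamma}. For $1\leq p < 2$, that lemma gives $\gamma(\ell_p^d) = d^{2/p-1}$, so
\begin{equation*}
    \frac{\gamma(\ell_p^d)}{d} \;=\; \frac{d^{2/p-1}}{d} \;=\; d^{2/p-2} \;=\; \frac{1}{d^{2-2/p}},
\end{equation*}
matching the first branch. For $2 < p \leq \infty$, \Cref{l:gamma} gives $\gamma(\ell_p^d) = 1$, so $\gamma(\ell_p^d)/d = 1/d$, matching the second branch. Substituting into the bound above completes the proof in both cases.

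There is no genuine obstacle here; the only point worth double-checking is the hypothesis $k(X) \leq 2d-1$, which fails for $\ell_2^d$ once $d \geq 3$ (since $k(\ell_2^d) = \binom{d+1}{2}$) and which is precisely why the corollary is stated only for $p \neq 2$. Everything else is bookkeeping with the previously established formulas.
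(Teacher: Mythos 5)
Your proof is correct and follows exactly the paper's argument: apply \Cref{t:algconn} with $k(\ell_p^d)=d$ (so the denominator $2d-k(X)$ becomes $d$) and substitute the values of $\gamma(\ell_p^d)$ from \Cref{l:gamma}. The paper's own proof is just a terser version of the same computation, and your remark about why $p=2$ must be excluded matches the remark the authors make immediately after the corollary.
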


\begin{proof}
    The space $\mathcal{T}(\ell_p^d)$ of infinitesimal rigid motions has dimension $k(\ell_p^d) = d$ for all $1\leq p\leq \infty$, $p\not=2$.
    Thus, the result follows from \Cref{t:algconn} and  \Cref{l:gamma}. 
\end{proof}

\begin{remark}
    The case $p=2$ is excluded from the above corollary as the dimension $k(\ell_2^d)=\binom{d+1}{2}$ does not satisfy the required bound in the hypothesis of \Cref{t:algconn}. The bound $a(G,\ell_2^d)\leq a(G)$ was obtained in \cite{pmga22}, and in \cite{lnpr25} by different methods, for all $d\geq 2$.   
\end{remark}

\section{Algebraic connectivity in \texorpdfstring{$\ell_\infty^d$}{l-infinity}}
\label{s:polyhedral}
In this section, a formula for the algebraic connectivity of a graph in $\ell_\infty^d$ is derived (\Cref{t:l infinity decomp}) along with a variety of  upper and lower bounds.  To begin,
it is shown that the monochrome  subgraphs of a complete framework in any polyhedral normed space are odd-hole-free (\Cref{t:oddhole}). Moreover, in the $\ell_\infty$-plane, these monochrome subgraphs are perfect graphs. These results are used to calculate the algebraic connectivity of complete graphs in $\ell_\infty^d$.

\subsection{Polyhedral normed spaces}
Let $\mathcal{P}$ be a convex centrally symmetric polytope in $\mathbb{R}^d$ with facets $\pm F_1,\ldots, \pm F_m$. 
Each facet $F$ can be expressed as,
$$F=\{x\in\mathcal{P}:x\cdot\hat{F}=1\}$$
for some unique vertex $\hat{F}$ of the dual polytope $\mathcal{P}^{\Delta}$.
The conical hull of a facet $F$ will be denoted $\cone(F)$.
 The {\em polyhedral normed space} $X=(\mathbb{R}^d,\|\cdot\|_\mathcal{P})$ has norm, $$\|x\|_\mathcal{P} := \max_{j\in [m]}\, |\hat{F}_j\cdot x|,\quad \forall\,x\in \mathbb{R}^d.$$ 
 The space $\mathcal{T}(X)$ of infinitesimal rigid motions of $X$  has dimension $k(X)=d$. 
 The norm $\|\cdot\|_\mathcal{P}$ is smooth at a point $x_0$ in the unit sphere $S_X$ if and only if there exists a unique facet $F$ containing $x_0$. 
See \cite{Kitson15} for more details.

Let $(G,p)$ be a   framework  in a polyhedral normed space $X=(\mathbb{R}^d,\|\cdot\|_\mathcal{P})$.
%where $G=(V,E)$ and the polytope $\mathcal{P}$ has facets $\pm F_1,\ldots,\pm F_m$.
The induced {\em monochrome subgraphs} $G_1,\ldots,G_m$ are defined as follows: For each $j\in[m]$, $G_j$ has vertex set $V$ and edge set, $$E_j=\{vw\in E: p_v-p_w\in\cone(F_j)\cup\cone(-F_j)\}.$$ 
Note that $(G_1,\ldots,G_m)$ is a decomposition of the graph $G=(V,E)$ in the sense of \Cref{s:decomp}. The $m$-tuple $(G_1,\ldots,G_m)$ will be referred to as a {\em monochrome subgraph decomposition} of $G$ in $X$.

\begin{example}
\label{ex:K5}
Let $(K_5,p)$ be the framework in $\ell_\infty^2$ with,
\[p_{v_1} = (1,-2),\quad p_{v_2}=(-2,0),\quad p_{v_3}=(0,1),\quad p_{v_4}=(2,0),\quad p_{v_5}=(-1,2).\]
The facets of the unit sphere in $\ell_\infty^2$ are $\pm F_1$ and $\pm F_2$ where
$F_1=\{1\}\times[-1,1]$ and $F_2=[-1,1]\times\{1\}$.
Note, for example, that $p_{v_1}-p_{v_2}\in \cone(F_1)$ and so the edge $v_1v_2$ lies in the induced monochrome subgraph $G_1$. 
The framework $(K_5,p)$ together with its induced monochrome subgraphs $G_1$ and $G_2$ is illustrated in \Cref{fig:K5}.    
\end{example}

\begin{figure}
    \centering

\begin{tikzpicture}[scale=0.7]

% Coordinates of the vertices
\coordinate (v1) at (1,-2);
\coordinate (v2) at (-2,0);
\coordinate (v3) at (0,1);
\coordinate (v4) at (2,0);
\coordinate (v5) at (-1,2);

% Draw the edges of the complete graph
\draw[edge, dashed] (v1) -- (v2);
\draw[edge] (v1) -- (v3);
\draw[edge] (v1) -- (v4);
\draw[edge] (v1) -- (v5);

\draw[edge, dashed] (v2) -- (v3);
\draw[edge, dashed] (v2) -- (v4);
\draw[edge] (v2) -- (v5);

\draw[edge, dashed] (v3) -- (v4);
\draw[edge] (v3) -- (v5);

\draw[edge, dashed] (v4) -- (v5);

% Draw the vertices
\fill (v1) circle (3pt) node[right] {$v_1$};
\fill (v2) circle (3pt) node[below left] {$v_2$};
\fill (v3) circle (3pt) node[above right] {$v_3$};
%\fill (v3) circle (3pt) node[above right] {$v_3$};
\fill (v4) circle (3pt) node[above right] {$v_4$};
\fill (v5) circle (3pt) node[above left] {$v_5$};

\end{tikzpicture}

    \caption{The framework $(K_5,p)$ in \Cref{ex:K5} together with the induced monochrome subgraphs $G_1$ (dashed) and $G_2$ (solid).}
    \label{fig:K5}
\end{figure}
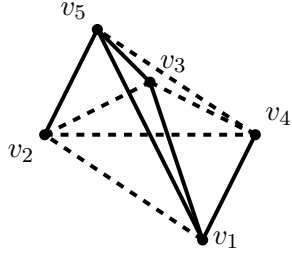 

Each path $P=(v_1,\ldots,v_k)$ in a monochrome subgraph $G_j$ has an induced edge labelling $\lambda_P$ whereby, for each $i\in[k-1]$,
$$\lambda_P(v_iv_{i+1})=
\left\{\begin{array}{cl}
1 & \mbox{if }p_{v_i}-p_{v_{i+1}}\in\cone(F_j), \\
-1 & \mbox{if }p_{v_i}-p_{v_{i+1}}\in\cone(-F_j).
\end{array}\right.$$
Denote by $P^+$ (respectively, $P^-$) the subgraph of $G_j$ with vertex set $v_1,\ldots,v_k$ and edge set $\lambda_P^{-1}(1)$ (respectively, $\lambda_P^{-1}(-1)$).
The {\em cluster graph} induced by $P^+$ (respectively, $P^-$) is the graph obtained by adding edges to each connected component of $P^+$ (respectively, $P^-$) so that each connected component is a clique. 

\begin{lemma}\label{l:path2}
	Let $(K_n,p)$ be a framework in a polyhedral normed space $(\mathbb{R}^d,\|\cdot\|_\mathcal{P})$
    and let  $G_j$ be a monochrome subgraph. 
    If $G_j$ contains a  path $P$
	then $G_j$ contains the cluster graphs induced by $P^+$ and $P^-$.
\end{lemma}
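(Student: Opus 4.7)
The plan is to exploit the fact that $\cone(F_j)$ and $\cone(-F_j)$ are both convex cones, so finite sums of vectors drawn from either cone stay inside that same cone. Because $K_n$ is complete, every potential edge $u_i u_{i'}$ is present as a graph edge, so the only thing to verify in order to place that edge in $G_j$ is the cone condition on $p_{u_i}-p_{u_{i'}}$.

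First I would record the structural observation that, since $P$ is a path, any connected component of $P^+$ (respectively $P^-$) is itself a subpath of $P$ — i.e.\ a maximal run of consecutive vertices $u_1,u_2,\ldots,u_r$ of $P$ such that $\lambda_P(u_l u_{l+1})=+1$ (respectively $-1$) for every $l\in[r-1]$. Consequently, to show that $G_j$ contains the cluster graph induced by $P^+$, it suffices to take such a run and show that for all $1\le i<i'\le r$, the edge $u_i u_{i'}$ lies in $E_j$.

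Next I would write the telescoping identity
\[
p_{u_i}-p_{u_{i'}} \;=\; \sum_{l=i}^{i'-1}\bigl(p_{u_l}-p_{u_{l+1}}\bigr),
\]
observe that each summand lies in $\cone(F_j)$ by the labelling $\lambda_P(u_l u_{l+1})=+1$, and conclude from convexity of the cone that $p_{u_i}-p_{u_{i'}}\in\cone(F_j)\subseteq \cone(F_j)\cup\cone(-F_j)$. By the definition of $E_j$ this places the edge $u_i u_{i'}$ in $G_j$. The argument for components of $P^-$ is identical, with $\cone(F_j)$ replaced by $\cone(-F_j)$.

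The only subtle point — and the one I would be most careful about — is the interplay with the smoothness requirement built into the definition of a framework: each difference $p_{u_l}-p_{u_{l+1}}$ must normalise to a smooth point of $S_X$, i.e.\ lie in the relative interior of $\cone(F_j)$ (not on a shared boundary with an adjacent cone). I would note that a sum of vectors in the relative interior of a convex cone remains in that relative interior, so the aggregated difference $p_{u_i}-p_{u_{i'}}$ still corresponds to a smooth point on $F_j$. This both keeps the edge in the framework and guarantees it belongs to $G_j$ rather than to some neighbouring monochrome subgraph, which is what makes the monochrome decomposition compatible with the cluster structure.
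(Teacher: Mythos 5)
Your proposal is correct and follows essentially the same route as the paper: both arguments rest on the convexity of $\cone(\pm F_j)$ (the paper sums two adjacent edge-differences and iterates, you telescope over a whole monochrome run) together with the completeness of $K_n$ to supply the chord as an edge of $E_j$. Your closing worry about smoothness is handled automatically by the hypothesis that $(K_n,p)$ is a framework --- every pair of vertices is already a framework edge with a smooth normalised difference --- though your relative-interior observation is a valid way to see the same thing.
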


\begin{proof}
Let $P=(v_1,\ldots,v_k)$ and suppose $\lambda_P(v_{i-1}v_i)=\lambda_P(v_iv_{i+1})$ for two adjacent edges $v_{i-1}v_i$ and $v_iv_{i+1}$ in the path $P$.
   Then, 
   $$p_{v_{i-1}}-p_{v_{i+1}}=(p_{v_{i-1}}-p_{v_{i}})+(p_{v_{i}}-p_{v_{i+1}})  \in\cone(F).$$ 
Thus the edge $v_{i-1}v_{i+1}$ lies in $G_j$. It follows that each connected component of $P^+$ (and similarly of $P^-$) spans a clique which lies in $G_j$. 
\end{proof}

A {\em hole} in a graph $G$ is a vertex-induced subgraph which is a cycle of length four or more. A hole in $G$ is {\em odd} if it is a cycle of odd length. A graph $G$ is {\em odd-hole-free} if no vertex-induced subgraph of $G$ is an odd hole. 

\begin{theorem}
\label{t:oddhole}
    Let $(K_n,p)$ be a framework in a polyhedral normed space $X=(\mathbb{R}^d,\|\cdot\|_\mathcal{P})$ with induced monochrome subgraphs $G_1,\ldots,G_m$. 
    Then  $G_1,\ldots,G_m$ are odd-hole-free graphs.
\end{theorem}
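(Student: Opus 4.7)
The plan is to argue by contradiction. Suppose some monochrome subgraph $G_j$ contains an odd hole, that is, a vertex-induced cycle $C = (v_1, v_2, \ldots, v_{2k+1}, v_1)$ of length $2k+1 \geq 5$ with no chord in $G_j$. Fix a direction of traversal around $C$ and consider the induced edge labels $\lambda(v_i v_{i+1}) \in \{+1, -1\}$ which record, as in the definition of $\lambda_P$, whether $p_{v_i} - p_{v_{i+1}}$ lies in $\cone(F_j)$ or $\cone(-F_j)$ (indices mod $2k+1$). Since there are only two possible values and the sequence has odd length, the labels cannot strictly alternate around $C$: there must exist some index $i$ with $\lambda(v_{i-1}v_i) = \lambda(v_i v_{i+1})$.

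Now apply \Cref{l:path2} to the sub-path $P = (v_{i-1}, v_i, v_{i+1})$ of $C$ (which certainly lies in $G_j$). Since both edges of $P$ share the same label, one of $P^+$ or $P^-$ equals all of $P$, which is connected on the three vertices $\{v_{i-1}, v_i, v_{i+1}\}$. Its induced cluster graph is therefore the triangle on these three vertices, and by \Cref{l:path2} this entire triangle is contained in $G_j$. In particular, the chord $v_{i-1}v_{i+1}$ belongs to $G_j$, contradicting the assumption that $C$ is an induced cycle.

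I do not anticipate any serious obstacle here: the main structural content is already packaged into \Cref{l:path2}, and the remainder is a short parity-plus-chord argument. The only point requiring care is that the labeling $\lambda$ is only defined relative to a choice of path direction (reversing a single edge flips its sign), but once a consistent orientation of $C$ is fixed the labels on the $2k+1$ edges form a well-defined cyclic word over a two-letter alphabet, which cannot alternate because $2k+1$ is odd. This furnishes the pair of equal consecutive labels needed to invoke \Cref{l:path2}.
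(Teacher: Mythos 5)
Your proof is correct and uses the same key ingredients as the paper's: \Cref{l:path2} together with a parity argument on the $\{+1,-1\}$ edge-labelling of the odd cycle. The only difference is organizational — you observe directly that an odd cyclic word over two letters must have two equal consecutive labels and apply \Cref{l:path2} once to a length-two subpath, whereas the paper first deduces that the labelling alternates along the path $(v_1,\ldots,v_k)$ and then handles the wrap-around edge with a second application of the lemma to the path $(v_{k-1},v_k,v_1,v_2)$; your version is a mild streamlining of the same argument.
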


\begin{proof}
Suppose $G_j$ contains an odd hole $H$ of length $k\geq 5$.
Then $H$ contains a path $P=(v_1,\ldots,v_k)$ which has an induced edge-labelling $\lambda_P$.
By \Cref{l:path2}, $G_j$ contains the cluster graphs induced by $P^+$ and $P^-$.
However, since $H$ is a vertex-induced cycle in $G_j$, the connected components of $P^+$ and $P^-$ cannot contain more than one edge. It follows that the edge-labelling $\lambda_P$ is a proper $2$-edge colouring of $P$.
In particular, since $k$ is odd, $\lambda_P(v_1v_2)\not=\lambda_P(v_{k-1}v_k)$. 
Without loss of generality, assume $\lambda_{P}(v_1v_2)=1$ and 
$\lambda_{P}(v_{k-1}v_k)=-1$.

Consider the path $Q=(v_{k-1},v_k,v_1,v_2)$ in $G_j$ together with its induced edge-labelling $\lambda_{Q}$.
By \Cref{l:path2}, $G_j$ contains the cluster graphs induced by $Q^+$ and $Q^-$.
Note that $\lambda_{Q}(v_1v_2)=\lambda_{P}(v_1v_2)=1$ and
$\lambda_{Q}(v_{k-1}v_k)=\lambda_{P}(v_{k-1}v_k)=-1$.
If $\lambda_{Q}(v_kv_1)=1$ then the cluster graph induced by $Q^+$ contains the edge $v_2v_k$.
If $\lambda_{Q}(v_kv_1)=-1$ then the cluster graph induced by $Q^-$ contains the edge $v_1v_{k-1}$.
In either case there is a contradiction since $H$ is a vertex-induced cycle in $G_j$.
\end{proof}

An {\em antihole} in a graph $G$ is a vertex-induced subgraph of $G$ that is the graph complement of a hole. An antihole is {\em odd} if it is the complement of an odd hole.  A graph $G$ is {\em odd-antihole-free} if no vertex-induced subgraph of $G$ is an odd antihole. 

\begin{theorem}
\label{t:perfect}
    Let $(K_n,p)$ be a framework in a polyhedral normed space $(\mathbb{R}^2,\|\cdot\|_\mathcal{P})$ where the polygon $\mathcal{P}$ is a quadrilateral. Let $G_1$ and $G_2$ be the induced monochrome subgraphs. 
    Then,
    \begin{enumerate}
    \item $G_1$ and $G_2$ are odd-antihole-free graphs.
    \item $G_1$ and $G_2$ are perfect graphs.
    \end{enumerate}
\end{theorem}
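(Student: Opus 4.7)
The plan is to exploit the fact that a centrally symmetric quadrilateral has exactly two antipodal pairs of facets, so $m=2$ and the monochrome decomposition partitions $E(K_n)$ into just two classes. Concretely, for any vertex subset $S\subseteq V$, the induced subgraphs $G_1[S]$ and $G_2[S]$ are complementary subgraphs of $K_{|S|}$: every edge of $K_{|S|}$ lies in exactly one of them. This complementarity is the crucial structural feature special to the quadrilateral case, and is what will allow us to convert statements about odd antiholes into statements about odd holes.

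For part (i), I would argue by contradiction. Suppose $S\subseteq V$ induces an odd antihole in $G_1$, that is, $G_1[S]\cong \overline{C_{2k+1}}$ for some $2k+1\geq 5$. Since $G_2[S]$ is the complement of $G_1[S]$ inside $K_{|S|}$, it follows that $G_2[S]\cong C_{2k+1}$, which is an odd hole in $G_2$. This contradicts \Cref{t:oddhole} applied to $G_2$. The symmetric argument, with the roles of $G_1$ and $G_2$ interchanged, rules out odd antiholes in $G_2$.

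Part (ii) then follows immediately from the Strong Perfect Graph Theorem of Chudnovsky, Robertson, Seymour and Thomas, which states that a graph is perfect if and only if it contains no odd hole and no odd antihole as a vertex-induced subgraph. \Cref{t:oddhole} supplies the odd-hole-free condition for both $G_1$ and $G_2$, and part (i) supplies the odd-antihole-free condition, so both $G_1$ and $G_2$ are perfect.

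I do not anticipate a substantive obstacle. The only two points requiring verification are (a) that $m=2$ when $\mathcal{P}$ is a centrally symmetric quadrilateral, which is immediate, and (b) that the complement of an odd cycle of length $\geq 5$ taken inside $K_{|S|}$ is again an odd cycle of length $\geq 5$ precisely when the original graph is an odd antihole, which is immediate from the definition of an antihole. The proof is therefore essentially a one-line reduction to \Cref{t:oddhole} via the Strong Perfect Graph Theorem.
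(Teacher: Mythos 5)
Your proof is correct and is essentially the paper's own argument: both use that a centrally symmetric quadrilateral yields exactly two monochrome classes, so $G_2$ is the edge-complement of $G_1$ in $K_n$, whence an odd antihole in one would give an odd hole in the other, contradicting \Cref{t:oddhole}; part (ii) then follows from the Strong Perfect Graph Theorem exactly as in the paper. Your write-up is if anything slightly more explicit about the complementarity of the induced subgraphs $G_1[S]$ and $G_2[S]$, which the paper leaves implicit.
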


\begin{proof}

$(i)$:
Suppose $G_1$ contains an odd antihole. Then its complement $G_2$ contains an odd hole, which contradicts \Cref{t:oddhole}$(ii)$. 

$(ii)$:
By the Strong Perfect Graph Theorem (\cite{CRST06}), a graph $G$ is perfect if and only if it is both odd-hole free and odd-antihole-free. Thus, the result follows from $(i)$ and \Cref{t:oddhole}.
\end{proof}

\subsection{Algebraic connectivity in \texorpdfstring{$\ell_\infty^d$}{l-infinity}}
We now focus on the specific polyhedral normed space $\ell_\infty^d=(\mathbb{R}^d,\|\cdot\|_\infty)$ where $\|x\|_\infty := \max_{i\in [d]} |x_i|$ for each $x=(x_1,\ldots,x_d)\in \mathbb{R}^d$. 

For $i\in[d]$, denote by $B_i:=b_ib_i^\top$ the $d\times d$ matrix unit with $(i,i)$-entry $1$ and zero entries elsewhere.  
Recall that two matrices $A,B\in M_n(\mathbb{R})$ are {\em similar} if there exists an invertible matrix $S\in M_n(\mathbb{R})$ such that $B=S^\top AS$.

\begin{lemma}
\label{l:linfinitylaplacian}
Let $(G,p)$ be a framework in $\ell_\infty^d$ with monochrome subgraph decomposition $(G_1,\ldots,G_d)$.
\begin{enumerate}[(i)]
\item For each monochrome subgraph $G_i$,
$L(G_i,p) = L(G_i)\otimes B_i$.
\item $L(G,p)$ is similar to the block diagonal matrix $ \bigoplus_{i\in[d]} L(G_i)$. 
\item $\lambda_{d+1}(L(G,p)) = \min_{i\in [d]}\,\lambda_{2}(L(G_i))$.
\end{enumerate}
\end{lemma}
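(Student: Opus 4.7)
My overall plan is to prove (i) and (ii) by direct block-matrix calculation using the explicit form of the support functionals in $\ell_\infty^d$, and then read off (iii) from (ii). The guiding intuition is that each edge of a framework in $\ell_\infty^d$ ``sees'' only one coordinate direction, so the framework Laplacian decouples completely along the standard axes.

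For (i), I would use the formula for the unique support functional at a smooth point of the unit sphere of $\ell_\infty^d$ given earlier in the paper. If $vw\in E_i$ then $(p_v-p_w)/\|p_v-p_w\|_\infty$ lies in the relative interior of the facet $\pm F_i$, so its support functional has standard matrix $\pm b_i^\top$, and consequently $\varphi_{v,w}^\top\varphi_{v,w}=b_ib_i^\top=B_i$. Substituting into the block formula \eqref{eq:laplacian}, the $(v,w)$-block of $L(G_i,p)$ reads $\deg_{G_i}(v)B_i$ on the diagonal, $-B_i$ on an edge of $G_i$, and $0_{d\times d}$ otherwise, which is exactly the $(v,w)$-block of $L(G_i)\otimes B_i$.

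For (ii), by \Cref{l:union} applied to the monochrome decomposition and part (i), we obtain $L(G,p)=\sum_{i=1}^d L(G_i)\otimes B_i$ in the ``vertex-first'' ordering of the coordinates of $(\mathbb{R}^d)^V$ (where columns of $L(G,p)$ are grouped by vertex). Let $K$ be the permutation matrix that regroups these coordinates ``coordinate-first''; equivalently, $K$ is the commutation matrix for the Kronecker product, satisfying $K^\top(A\otimes B)K = B\otimes A$ for all $A\in M_n(\mathbb{R})$ and $B\in M_d(\mathbb{R})$. Then
\[
K^\top L(G,p) K \;=\; \sum_{i=1}^d B_i \otimes L(G_i).
\]
Since $B_i$ has a $1$ only in its $(i,i)$-entry, each summand $B_i\otimes L(G_i)$ is a $d\times d$ array of $n\times n$ blocks which vanishes outside position $(i,i)$, where it equals $L(G_i)$. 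Summing, we obtain precisely the block-diagonal matrix $\bigoplus_{i\in[d]}L(G_i)$, exhibiting the claimed similarity.

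For (iii), since $K$ is orthogonal, $L(G,p)$ and $\bigoplus_{i\in[d]}L(G_i)$ have identical spectra with multiplicity. The spectrum of a block-diagonal matrix is the multiset union of the summand spectra, and each graph Laplacian $L(G_i)$ contributes a zero eigenvalue. Hence the $d$ smallest eigenvalues of the direct sum are all zero, and the $(d+1)$-st smallest equals $\min_{i\in[d]}\lambda_2(L(G_i))$; this remains correct even when some $G_i$ is disconnected, as both sides then equal $0$. I expect the only real hurdle to be fixing a convention for the Kronecker product compatible with the ordering of coordinates implicit in \eqref{eq:laplacian}; once that is done, the proof is essentially observation.
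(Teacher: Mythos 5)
Your proposal is correct and follows essentially the same route as the paper: the same computation of $\varphi_{v,w}^\top\varphi_{v,w}=B_i$ for part (i), the same perfect-shuffle (commutation matrix) conjugation $\sum_i L(G_i)\otimes B_i \mapsto \bigoplus_i L(G_i)$ for part (ii), and the same spectral-union argument for part (iii). Your extra remark that the formula in (iii) survives when some $G_i$ is disconnected is a welcome, if minor, clarification.
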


\proof
$(i)$: For each edge $vw\in E_i$, the support functional $\varphi_{v,w}$ has standard matrix $\pm b_i^\top\in\mathbb{R}^{1\times d}$.
Thus, using \eqref{eq:laplacian}, the framework Laplacian $L(G_i,p)$ is the block matrix with entries,
\begin{equation*}
    L_{v,w}^p = \left\{ \arraycolsep=1.4pt\def\arraystretch{1.5}
    \begin{array}{cl}
        \deg(v)B_i &\text{ if } v = w, \\
        -B_i  &\text{ if } v \neq w\text{ and } vw\in E, \\
        0_{d\times d} &\text{ otherwise}.
    \end{array}    \right.
\end{equation*}

$(ii)$: By \Cref{l:union} and $(i)$,
  $$L(G,p) =\sum_{i\in[d]} L(G_i,p)= \sum_{i\in[d]}L(G_i)\otimes B_i =\sum_{i\in[d]}P(B_i\otimes L(G_i))P^\top =P\left(\bigoplus_{i\in[d]} L(G_i)\right) P^\top ,$$
  where $P$ is the $d|V|\times d|V|$ ``perfect shuffle'' permutation matrix.

  $(iii)$: By $(ii)$, the framework Laplacian matrix $L(G,p)$ and the direct sum $\bigoplus_{i\in[d]} L(G_i)$ are similar and so  have the same set of eigenvalues (including multiplicities). 
  The set of eigenvalues of $\bigoplus_{i\in[d]} L(G_i)$ is the union of the eigenvalues of the Laplacian matrices $L(G_1),\ldots,L(G_d)$ (again counting multiplicities).
  Note that $\lambda_1(L(G_i))=0$ for each $i\in[d]$ and so the result follows. 
\endproof

Let $G=(V,E)$ be a graph and fix $d\geq 2$. 
Denote by  $\mathcal{M}=\mathcal{M}(G,\ell_\infty^d)$ the set of all monochrome subgraph decompositions $(G_1,\ldots,G_d)$ of $G$ in $\ell_\infty^d$.
    
\begin{theorem}\label{t:l infinity decomp}
    Let $G=(V,E)$ be a graph with at least $d+1$ vertices where $d\geq 2$.
    Then,
    \begin{equation*}
        a(G,\ell_\infty^d) = \max_{(G_1,\ldots,G_d) \in \mathcal{M}}\, \min_{i \in [d]}\, a(G_i).
    \end{equation*}
\end{theorem}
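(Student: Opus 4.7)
The proof is essentially a direct consequence of \Cref{l:linfinitylaplacian}(iii), which already does the real work. My plan is to split the claimed equality into two inequalities and handle each separately.

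First, I would establish the upper bound $a(G,\ell_\infty^d) \le \max_{(G_1,\ldots,G_d)\in\mathcal{M}}\min_{i\in[d]} a(G_i)$. For every $p \in \mathcal{W}(G,\ell_\infty^d)$, the monochrome subgraph decomposition induced by $(G,p)$ is, by definition, an element $(G_1,\ldots,G_d)$ of $\mathcal{M}$. By \Cref{l:linfinitylaplacian}(iii), the rigidity eigenvalue satisfies
\[
\lambda_{d+1}(L(G,p)) \;=\; \min_{i\in[d]} \lambda_2(L(G_i)) \;=\; \min_{i\in[d]} a(G_i).
\]
Taking the supremum over $p\in\mathcal{W}(G,\ell_\infty^d)$ yields the desired bound, since the right-hand side is at most $\max_{(G_1,\ldots,G_d)\in\mathcal{M}}\min_i a(G_i)$.

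For the reverse inequality, I would argue that every decomposition in $\mathcal{M}$ is realised by some framework. By definition of $\mathcal{M}$, given any $(G_1,\ldots,G_d)\in\mathcal{M}$ there exists a framework $(G,p)$ in $\ell_\infty^d$ whose induced monochrome decomposition is precisely $(G_1,\ldots,G_d)$. Again applying \Cref{l:linfinitylaplacian}(iii),
\[
a(G,\ell_\infty^d) \;\ge\; \lambda_{d+1}(L(G,p)) \;=\; \min_{i\in[d]} a(G_i).
\]
Taking the maximum over $(G_1,\ldots,G_d)\in\mathcal{M}$ completes this direction.

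Finally, I would note that the set $\mathcal{M}$ is finite, since each element is an ordered partition of the finite edge set $E$ into $d$ parts. Consequently the maximum on the right-hand side is attained, and in fact the supremum defining $a(G,\ell_\infty^d)$ is also attained by any framework realising an optimising decomposition. There is no serious obstacle here: the entire substance of the theorem has been absorbed into the spectral identity in \Cref{l:linfinitylaplacian}(iii), and the only thing to verify is that the definition of $\mathcal{M}$ matches the decompositions actually arising from frameworks in $\mathcal{W}(G,\ell_\infty^d)$, which is immediate.
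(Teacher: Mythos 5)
Your proposal is correct and follows essentially the same route as the paper: both directions rest on \Cref{l:linfinitylaplacian}(iii) together with the fact that $\mathcal{M}$ is, by definition, exactly the set of decompositions induced by frameworks. The only cosmetic difference is that the paper establishes the upper bound by using finiteness of the set of possible framework Laplacians to show the supremum is attained, whereas you bound each $\lambda_{d+1}(L(G,p))$ directly by the maximum over $\mathcal{M}$ and invoke finiteness only to ensure that maximum is well defined.
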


\begin{proof}
  Let $(G_1,\ldots,G_d)\in \mathcal{M}(G,\ell_\infty^d)$ be a  monochrome subgraph decomposition induced by a framework $(G,p)$ in $\ell_\infty^d$.
  Recall that $k(\ell_\infty^d)=d$. Thus, by \Cref{l:linfinitylaplacian}, $$a(G,\ell_\infty^d) \geq\lambda_{d+1}(L(G,p)) = \min_{i\in [d]}\,a(G_i).$$
There are at most finitely many framework Laplacian matrices $L(G,p)$ that can be constructed from the set of points $p\in \mathcal{W}(G,\ell_\infty^d)$.
Thus, $a(G,\ell_\infty^d) = \lambda_{d+1}(L(G,p'))$ for some $p'\in\mathcal{W}(G,\ell_\infty^d)$.
In  particular, by Lemma \ref{l:linfinitylaplacian}, $a(G,\ell_\infty^d) = \min_{i\in [d]}\,a(G_i')$ where $(G_1',\ldots,G_d')\in \mathcal{M}(G,\ell_\infty^d)$ is the monochrome subgraph decomposition induced by the framework $(G,p')$.
  \end{proof}

For the following corollary,  recall that the \emph{Cartesian product} (or \emph{box product}) of graphs $G_1=(V_1,E_1)$ and $G_2=(V_2,E_2)$ is the graph $G_1 ~ \square ~ G_2 := (V_1 \times V_2, E_1 ~ \square ~ E_2)$ where,
\begin{equation*}
    \{(v_1, v_2),(w_1,w_2)\} \in E_1 ~ \square ~ E_2  \quad \iff \quad v_1 = w_1 \text{ and } v_2w_2 \in E_2, \text{ or, } v_2 = w_2 \text{ and } v_1w_1 \in E_1.
\end{equation*}

\begin{corollary}\label{c:linfbound}
    Let $G=(V,E)$ be a graph with $n:=|V| \geq d+1$ vertices where $d\geq 2$.
    \begin{enumerate}
    \item $a(G,\ell_\infty^d) \leq a(G,\ell_\infty^{d-1})$.
    \item $a(G,\ell_\infty^d) \leq a(G)/d$.
    \item Either $a(G,\ell_\infty^d)=0$ or $a(G,\ell_\infty^d) \geq 2(1-\cos (\pi/n))$.
    \item $a(G,\ell_\infty^2)  = \max_{(G_1,G_2) \in \mathcal{M}} \,a(G_1 ~ \square ~ G_2)$.
    \end{enumerate}
\end{corollary}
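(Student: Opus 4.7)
The plan is to derive all four bounds directly from the formula $a(G,\ell_\infty^d) = \max_{(G_1,\ldots,G_d) \in \mathcal{M}} \min_i a(G_i)$ of \Cref{t:l infinity decomp}. For (ii), any decomposition satisfies $\min_i a(G_i) \leq \tfrac{1}{d}\sum_i a(G_i)$; since the $G_i$ are edge-disjoint spanning subgraphs of $G$, Fiedler's inequality \Cref{l:fiedler}(iii) gives $\sum_i a(G_i) \leq a(G)$, yielding the required bound. For (iii), if $a(G,\ell_\infty^d)>0$ the formula produces a decomposition with $a(G_i)>0$ for every $i$, so each $G_i$ is connected with edge connectivity at least one; \Cref{l:fiedler}(iv) then gives $a(G_i) \geq 2(1-\cos(\pi/n))$ for all $i$. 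For (iv), I would combine the $d=2$ case of the formula with the standard identity $L(G_1 \square G_2) = L(G_1) \otimes I_{|V|} + I_{|V|} \otimes L(G_2)$, whose eigenvalues are the sums $\mu_i + \nu_j$ of Laplacian eigenvalues of $G_1$ and $G_2$; the smallest two are $0$ and $\min(a(G_1), a(G_2))$, giving $a(G_1 \square G_2) = \min(a(G_1), a(G_2))$, and substitution yields the formula.

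Part (i) is more delicate. Given any $(G_1,\ldots,G_d) \in \mathcal{M}(G,\ell_\infty^d)$ realized by a framework $p$, the plan is to consider the framework $p'$ in $\mathbb{R}^{d-1}$ obtained by dropping the $d$-th coordinate, and to analyse its monochrome decomposition $(H_1,\ldots,H_{d-1})$ in $\ell_\infty^{d-1}$. For each edge $vw \in E(G_i)$ with $i < d$, the value $|(p_v - p_w)_i|$ is the strict maximum over all $d$ absolute coordinates of $p_v - p_w$ and therefore remains the strict maximum over the first $d-1$ coordinates; hence $vw \in E(H_i)$. The edges of $G_d$ are redistributed among $H_1,\ldots,H_{d-1}$ according to which of the first $d-1$ absolute coordinates dominates. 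Therefore $G_i \subseteq H_i$ for all $i < d$, and since adding edges to a graph only increases its Laplacian in the positive-semidefinite order (while keeping the all-ones vector in the kernel), $a(H_i) \geq a(G_i)$ for $i<d$. Hence $\min_i a(H_i) \geq \min_{i<d} a(G_i) \geq \min_i a(G_i)$, and applying \Cref{t:l infinity decomp} in dimension $d-1$ (using the identification $a(G,\ell_\infty^1) = a(G)$ from the remark after \Cref{l:rigidityeigenvalue} when $d=2$) gives $a(G,\ell_\infty^{d-1}) \geq a(G,\ell_\infty^d)$.

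The main technical obstacle for (i) is that the projected framework $p'$ need not lie in $\mathcal{W}(G,\ell_\infty^{d-1})$: for an edge originally in $G_d$, the maximum absolute value among the first $d-1$ coordinates of $p_v - p_w$ may fail to be uniquely attained, in which case the edge projects to a non-smooth direction of $\ell_\infty^{d-1}$. This should be resolved by a small preliminary perturbation of $p$ within its $\ell_\infty^d$-monochrome cone; this is an open condition, so the original decomposition $(G_1,\ldots,G_d)$ is preserved, while one can additionally force uniqueness of the maximum among the first $d-1$ coordinates for every edge in $G_d$. The existence of such a perturbation follows from the density considerations underlying \Cref{l:wpdense} and \Cref{p:alg}(ii).
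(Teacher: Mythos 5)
Your proposal is correct and follows essentially the same route as the paper: parts (ii)--(iv) are derived from \Cref{t:l infinity decomp} together with \Cref{l:fiedler} exactly as in the paper (the paper simply cites Fiedler's product identity $\min\{a(G_1),a(G_2)\}=a(G_1\,\square\,G_2)$ where you re-derive it from the Kronecker-sum form of the Laplacian), and for (i) the paper likewise projects a maximising framework onto the first $d-1$ coordinates, observes $G_i\subseteq G_i'$ for $i<d$, and handles the possible non-smoothness of projected edge directions by the same open-and-dense perturbation argument you describe.
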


\begin{proof}
    $(i)$: Let $\pi: (\mathbb{R}^d)^V \rightarrow (\mathbb{R}^{d-1})^V$ be the map that projects every component $p_v\in \mathbb{R}^d$ of a point $p=(p_v)_{v\in V}\in (\mathbb{R}^d)^V$ onto its first $d-1$ coordinates.
    Choose any framework $(G,p)$ in $\ell_\infty^d$ such that $\lambda_{d+1}(L(G,p))$ is maximal and $(G,\pi(p))$ is a framework in $\ell_\infty^{d-1}$;
    this is possible since $\mathcal{W}(G,\ell_\infty^d)$ is an open and dense subset of $(\mathbb{R}^d)^V$ (\Cref{l:wpdense}) and the function $x\mapsto L(G, x)$ is locally constant on $\mathcal{W}(G,\ell_\infty^d)$.
    If $G_1,\ldots, G_d$ are the monochrome subgraphs of $(G,p)$ and $G_1',\ldots, G'_{d-1}$ are the monochrome subgraphs of $(G,\pi(p))$,
    then $G_i \subseteq G_i'$ for each $i \in [d-1]$.
    By \Cref{t:l infinity decomp} and \Cref{l:fiedler}$(iii)$,
    $$a(G,\ell_\infty^d) = \lambda_{d+1}(L(G,p)) = \min_{i \in [d]} a(G_i) \leq \min_{i \in [d-1]} a(G'_i) \leq a(G,\ell_\infty^{d-1}).$$
    
    $(ii)$: By \Cref{t:l infinity decomp}, there exists a monochrome subgraph decomposition $(G_1,\ldots, G_d) \in \mathcal{M}(G,\ell_\infty^d)$ such that $a(G,\ell_\infty^d) = \min_{i \in [d]}\, a(G_i)$.
    %$a(G_1) \leq \ldots \leq a(G_d)$ and 
    Thus,  
    $$
        d \cdot a(G,\ell_\infty^d) = d \cdot \min_{i \in [d]}\, a(G_i) \leq \sum_{i=1}^d a(G_i)\leq a(G),
   $$
    where the final inequality follows from \Cref{l:fiedler}$(iii)$.

    $(iii)$: Suppose $a(G,\ell_\infty^d)>0$. 
    By \Cref{t:l infinity decomp}, there exists a monochrome subgraph decomposition $(G_1,\ldots,G_d)\in \mathcal{M}(G,\ell_\infty^d)$ such that $a(G,\ell_\infty^d) =\min_{i\in[d]} \,a(G_i)$.
    By \Cref{l:fiedler}$(i)$,  
    the monochrome subgraphs $G_1,\ldots,G_d$ are connected spanning subgraphs of $G$.
Thus, by \Cref{l:fiedler}$(iv)$, 
$$a(G,\ell_\infty^d)   =\min_{i\in[d]} \,a(G_i)
\geq 2(1-\cos (\pi/n)).$$

$(iv)$:  By \Cref{t:l infinity decomp}, $a(G,\ell_\infty^2) =\min\,\{ a(G_1),a(G_2)\}$ for some monochrome subgraph decomposition $(G_1,G_2)\in \mathcal{M}(G,\ell_\infty^2)$. By \cite[Theorem 3.4]{Fiedler73}, $\min\,\{ a(G_1),a(G_2)\}=a(G_1 ~ \square ~ G_2)$.
\end{proof}

\subsection{An upper bound for \texorpdfstring{$a(G,\ell_\infty^d)$}{a(G,l-infinity)}}

Let $z=[1\,\cdots\,1]^\top \in\mathbb{R}^n$ and define
$z_i=b_i\otimes z\in\mathbb{R}^{nd}$ for each $i\in[d]$.
Let $Z$ be the subspace of $\mathbb{R}^{nd}$ spanned by the orthogonal vectors $z_1,\ldots, z_d$.

\begin{lemma}
\label{l:upperbound}
Let $M=(m_{ij})$ be a symmetric positive semidefinite $nd\times nd$ matrix such that $M(Z)$=0. Then,
$$\lambda_{d+1}(M) \leq \frac{n}{n-1}\,\min_{i\in[dn]} m_{ii}.$$
\end{lemma}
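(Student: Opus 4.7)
The plan is to invoke the Courant--Fischer characterisation of $\lambda_{d+1}(M)$ (\Cref{t:courant}) and then exhibit a single test vector that attains the claimed bound. Since $M$ is symmetric positive semidefinite and $Z \subseteq \ker M$ is $d$-dimensional (the vectors $z_1,\ldots,z_d$ are orthogonal and nonzero), the first $d$ eigenvalues of $M$ are zero with corresponding eigenspace containing $Z$. Hence by \Cref{t:courant},
\begin{equation*}
\lambda_{d+1}(M) \;=\; \min\bigl\{x^\top M x : x \in Z^\perp,\; \|x\|_2 = 1\bigr\}.
\end{equation*}

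To bound this minimum from above, I would construct a convenient unit vector in $Z^\perp$ from the standard basis vector at the index where the diagonal is smallest. Let $i_0 \in [nd]$ be an index with $m_{i_0 i_0} = \min_{i\in[nd]} m_{ii}$. Writing $i_0 = (k_0-1)n + j_0$ for unique $k_0 \in [d]$ and $j_0 \in [n]$, the vector $z_{k_0} = b_{k_0} \otimes z$ satisfies $e_{i_0}^\top z_{k_0} = 1$ and $\|z_{k_0}\|_2^2 = n$. Project $e_{i_0}$ orthogonally onto $Z^\perp$ by setting
\begin{equation*}
x \;:=\; e_{i_0} - \tfrac{1}{n}\,z_{k_0},
\end{equation*}
which lies in $Z^\perp$ (by orthogonality of the $z_k$'s this is enough) and has squared norm $\|x\|_2^2 = 1 - \tfrac{1}{n} = \tfrac{n-1}{n}$.

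Now I would compute $x^\top M x$ using the key fact that $M z_{k_0} = 0$, which together with the symmetry of $M$ kills every cross term:
\begin{equation*}
x^\top M x \;=\; \bigl(e_{i_0} - \tfrac{1}{n} z_{k_0}\bigr)^\top M\, e_{i_0} \;=\; e_{i_0}^\top M e_{i_0} \;=\; m_{i_0 i_0}.
\end{equation*}
Normalising by setting $\hat{x} := \sqrt{n/(n-1)}\, x \in Z^\perp$, which has unit Euclidean norm, gives $\hat{x}^\top M \hat{x} = \tfrac{n}{n-1}\, m_{i_0 i_0}$. Combining with the Courant--Fischer expression above yields
\begin{equation*}
\lambda_{d+1}(M) \;\leq\; \hat{x}^\top M \hat{x} \;=\; \frac{n}{n-1}\,\min_{i \in [nd]} m_{ii},
\end{equation*}
which is the desired inequality. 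There is no real obstacle here: the only nontrivial step is identifying the correct test vector, namely the projection of a diagonal-minimising standard basis vector onto $Z^\perp$, after which everything reduces to a one-line computation exploiting $z_{k_0} \in \ker M$.
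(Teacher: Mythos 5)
Your proof is correct and rests on the same core computation as the paper's: the orthogonal projection of a standard basis vector onto $Z^\perp$ has squared norm $(n-1)/n$ while its quadratic form value is still the corresponding diagonal entry of $M$ (since $M$ annihilates $Z$). The paper packages this slightly differently --- it shows $\tilde M := M - \lambda_{d+1}(M)\bigl(I_{dn} - \tfrac{1}{n} I_d \otimes J\bigr)$ is positive semidefinite and reads the bound off the diagonal of $\tilde M$ --- but that nonnegativity of the $i_0$-th diagonal entry is exactly the statement that your normalised test vector $\hat x$ has Rayleigh quotient at least $\lambda_{d+1}(M)$, so the two arguments are equivalent.
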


\begin{proof}
By the Courant-Fischer Theorem (\Cref{t:courant}),
$$\lambda_{d+1}(M)=\min\,\{x^\top Mx: \, x\in Z^\perp,\, \|x\|_2=1\}.$$
Let $J$ be the $n\times n$ matrix with all entries equal to $1$.
Let $\tilde{M} = M-\lambda_{d+1}(M)(I_{dn}-\frac{1}{n}I_d\otimes J)$.
Note that $z^\top \tilde{M}z=0$ for all $z\in Z$.
Also, for each $x\in Z^\perp$ with $\|x\|_2=1$,
$$x^\top \tilde{M}x = x^\top Mx-\lambda_{d+1}(M)\geq 0.$$
Thus $\tilde{M}$ is positive semidefinite.
This in turn implies the diagonal entries of $\tilde{M}$ are non-negative,
and so,
$$\min_{i\in[dn]}\, m_{ii}- \lambda_{d+1}(M)\left(1-\frac{1}{n} \right)\geq 0.$$
The result now follows.
\end{proof}

\begin{theorem}\label{thm:upperbound}
    Let $G=(V,E)$ be a graph with $n$ vertices, where $n\geq d+1$, and let $d\geq 1$.
    Then, $$a(G,\ell_\infty^d) \leq \frac{n}{n-1}  \biggl\lfloor\frac{1}{d}\,\min_{v\in V}\, \deg_{G}(v)\biggr\rfloor.$$ 
\end{theorem}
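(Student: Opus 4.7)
The strategy is to apply Lemma~\ref{l:upperbound} to the framework Laplacian $L(G,p)$ for an arbitrary $p \in \mathcal{W}(G,\ell_\infty^d)$, and then take the supremum. The $\ell_\infty^d$ structure from Lemma~\ref{l:linfinitylaplacian} makes both the kernel condition and the diagonal computation essentially automatic.

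First I would pick an arbitrary $p \in \mathcal{W}(G,\ell_\infty^d)$ with monochrome subgraphs $G_1,\ldots,G_d$, and, by Lemma~\ref{l:linfinitylaplacian}(ii), pass (via the perfect shuffle permutation) to the similar matrix $M := \bigoplus_{i\in[d]} L(G_i)$. Since $M$ is positive semidefinite with the all-ones vector $z \in \mathbb{R}^n$ in the kernel of each block $L(G_i)$, the vectors $z_i = b_i \otimes z$ lie in $\ker M$, so $M(Z)=0$. As $L(G,p)$ and $M$ are similar, they share eigenvalues and (up to reindexing) diagonal entries, so Lemma~\ref{l:upperbound} applied to $M$ yields
\[
\lambda_{d+1}(L(G,p)) \;=\; \lambda_{d+1}(M) \;\leq\; \frac{n}{n-1} \min_{i\in[dn]} m_{ii}.
\]

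Next I would identify those diagonal entries. By Lemma~\ref{l:linfinitylaplacian}(i), the $(v,v)$-block of $L(G_i,p)$ is $\deg_{G_i}(v)\,B_i$, so summing over $i$ the diagonal of $L(G,p)$ consists precisely of the monochrome degrees $\{\deg_{G_i}(v) : v \in V,\, i \in [d]\}$, and these are also the diagonal entries of $M$. Since $(G_1,\ldots,G_d)$ decomposes $G$, we have $\sum_{i=1}^d \deg_{G_i}(v) = \deg_G(v)$ for every $v$, and so by an averaging and integrality argument,
\[
\min_{i\in[d]} \deg_{G_i}(v) \;\leq\; \biggl\lfloor \frac{\deg_G(v)}{d}\biggr\rfloor.
\]
Minimising over $v$ and using monotonicity of the floor gives $\min_{(v,i)} \deg_{G_i}(v) \leq \bigl\lfloor \tfrac{1}{d}\min_{v} \deg_G(v)\bigr\rfloor$.

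Combining these estimates yields $\lambda_{d+1}(L(G,p)) \leq \tfrac{n}{n-1}\bigl\lfloor \tfrac{1}{d}\min_v \deg_G(v)\bigr\rfloor$ for every $p \in \mathcal{W}(G,\ell_\infty^d)$, and taking the supremum over such $p$ completes the proof. There is no real obstacle here: all the machinery (the block structure of $L(G,p)$ in $\ell_\infty^d$ and the general spectral bound of Lemma~\ref{l:upperbound}) has been prepared, and the only conceptual step is observing that the diagonal of $L(G,p)$ records exactly the monochrome degrees, so that a pigeonhole/averaging argument produces the floor of $\min_v \deg_G(v)/d$.
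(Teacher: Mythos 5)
Your proposal is correct and follows essentially the same route as the paper: pass to the similar block-diagonal matrix $\bigoplus_{i\in[d]} L(G_i)$ via \Cref{l:linfinitylaplacian}, apply \Cref{l:upperbound} to bound $\lambda_{d+1}$ by the smallest monochrome degree, and then use the decomposition identity $\sum_i \deg_{G_i}(v)=\deg_G(v)$ together with an averaging/floor argument. The only cosmetic difference is the order in which you take the minimum over $v$ and the average over $i$, which yields the same bound.
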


\begin{proof}
 Let $(G,p)$ be a framework in $\ell_\infty^d$ with induced monochrome subgraphs $G_1,\ldots,G_d$.
 By \Cref{l:linfinitylaplacian}$(ii)$, the framework Laplacian  $L(G,p)$ is similar to the direct sum $\oplus_{i\in[d]} L(G_i)$. 
 Note that $\oplus_{i\in[d]} L(G_i)$ is a symmetric positive semidefinite $nd\times nd$ matrix.
 Also, for each $i\in[d]$ and each vertex $v\in V$, the diagonal $(v,v)$-entry of $L(G_i)$ is $\deg_{G_i}(v)$.
 Thus, by \Cref{l:upperbound},
 $$\lambda_{d+1}(L(G,p))  = \lambda_{d+1}\left(\oplus_{i\in[d]}L(G_i)\right) \leq \frac{n}{n-1}\,\min_{i\in[d]}\, \min_{v\in V}\,\deg_{G_i}(v).$$
 Note that,
$$\min_{i\in[d]}\, \min_{v\in V}\, \deg_{G_i}(v) 
\leq \biggl\lfloor\frac{1}{d} \sum_{i\in[d]} \min_{v\in V}\,\deg_{G_i}(v)\biggr\rfloor \leq 
\biggl\lfloor\frac{1}{d} \min_{v\in V}\,\deg_G(v)\biggr\rfloor.$$
The result follows.
\end{proof}

\begin{remark}
\Cref{thm:upperbound} is a $d$-dimensional generalisation of the following result due to Fiedler (\cite[\S3.5]{Fiedler73}): For any graph $G=(V,E)$ with $n$ vertices,
$$a(G)\leq \frac{n}{n-1}\,\min_{v\in V}\,\deg_G(v).$$
Fiedler's result corresponds to the $d=1$ case in the statement of \Cref{thm:upperbound}.

    An immediate consequence of \Cref{thm:upperbound} is that for any $d \geq 2$ and any $n \geq d+1$,  
\begin{equation*}
    a(K_n,\ell_\infty^d) \leq \frac{n}{n-1}  \biggl\lfloor\frac{n-1}{d} \biggr\rfloor \leq \frac{n}{d} = a(K_n)/d.
\end{equation*}
It follows that \Cref{thm:upperbound} gives a better upper bound for $a(K_n,\ell_\infty^d)$ than is provided by \Cref{c:linfbound}$(ii)$ if $n-1$ is not a multiple of $d$.

\Cref{thm:upperbound} also provides an analogue of the Alon-Boppana bound for regular graphs.
Specifically, if $G$ is a $k$-regular graph with $n$ vertices then, 
\begin{equation*}
    a(G,\ell_\infty^d) \leq \frac{n}{n-1}  \biggl\lfloor\frac{k}{d} \biggr\rfloor = (1+ o(1)) \biggl\lfloor\frac{k}{d} \biggr\rfloor.
\end{equation*}
In particular, if $G$ is a $2d$-regular graph then $a(G,\ell_\infty^d) \leq 2+o(1)$.
This latter upper bound will be improved upon in Section \ref{s:sparse}.
\end{remark}

\subsection{Calculations when \texorpdfstring{$d=2$}{d=2}}
Let $\mathcal{S}(G)$ denote the set of all spanning trees $T$ in a graph $G=(V,E)$ whose complement $G \setminus T$ is also a spanning tree in $G$.

\begin{proposition}\label{p:2dminrigidellinf}
    If a graph $G=(V,E)$ is a union of two edge-disjoint spanning trees then,
    \begin{equation*}
        a(G,\ell_\infty^2) = \max_{T\in\mathcal{S}(G)}\,\min \,\{ a(T),\, a(G\backslash T)\}.
    \end{equation*}
\end{proposition}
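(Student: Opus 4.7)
The plan is to use \Cref{t:l infinity decomp}, which expresses
\[
a(G,\ell_\infty^2) \;=\; \max_{(G_1,G_2) \in \mathcal{M}(G,\ell_\infty^2)} \min\{a(G_1),a(G_2)\},
\]
and to show that under the present hypothesis the extremal monochrome decompositions coincide with the pairs $(T, G\setminus T)$ for $T \in \mathcal{S}(G)$.

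For the inequality $a(G,\ell_\infty^2) \leq \max_{T\in\mathcal{S}(G)} \min\{a(T), a(G\setminus T)\}$, I fix an optimal monochrome pair $(G_1,G_2)$. If this optimum is zero the bound is trivial; otherwise $a(G_i) > 0$ forces each $G_i$ to be a connected spanning subgraph of $G$ by \Cref{l:fiedler}(i), and hence $|E(G_i)| \geq n-1$ for each $i$. Since $|E(G_1)| + |E(G_2)| = |E(G)| = 2(n-1)$, each $G_i$ has exactly $n-1$ edges, so each is a spanning tree and $G_1 \in \mathcal{S}(G)$, giving the bound.

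For the reverse inequality, given $T \in \mathcal{S}(G)$, I would construct a framework $(G,p) \in \mathcal{W}(G,\ell_\infty^2)$ whose induced monochrome decomposition is $(T, G\setminus T)$; then \Cref{l:linfinitylaplacian}(iii) yields $\lambda_3(L(G,p)) = \min\{a(T), a(G\setminus T)\}$, whence $a(G,\ell_\infty^2) \geq \min\{a(T), a(G\setminus T)\}$. The key device is the parametrisation
\[
p_v \;=\; \tfrac{1}{2}(\alpha_v + \beta_v,\; \alpha_v - \beta_v)
\]
for injective $\alpha,\beta:V\to\mathbb{R}$. A short expansion shows that the edge $vw$ is dominated by the first coordinate if and only if $(\alpha_v-\alpha_w)(\beta_v-\beta_w) > 0$, and by the second coordinate if the opposite strict inequality holds. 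The realisation therefore reduces to the combinatorial problem of finding two linear orders on $V$ that agree on every edge of $T$ and disagree on every edge of $G\setminus T$, or equivalently, to finding an acyclic orientation of $G$ whose reversal on $(G\setminus T)$-edges is again acyclic.

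The main obstacle is to prove that such a pair of orders always exists. My approach would be induction on $n=|V|$: the base case $n=4$ forces $G = K_4$ and realisability is verified by an explicit placement of the four vertices. For the inductive step, the cleanest scenario is the presence of a degree-$2$ vertex, whose deletion preserves the union-of-two-spanning-trees property; one then applies the hypothesis and inserts the removed vertex into both orders according to its two incident edges. When no degree-$2$ vertex is available (for instance in $K_5$ with two disjoint edges removed), I would fall back on a structural argument exploiting either the fundamental cycles of $G\setminus T$ with respect to $T$, or a matroid-theoretic tree-packing construction, to furnish the required orderings directly. Combining this realisation with the upper bound yields the stated equality.
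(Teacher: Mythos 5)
Your upper bound argument is exactly the paper's: take an optimal monochrome pair $(G_1,G_2)$, note that a positive optimum forces both subgraphs to be connected and spanning, and the edge count $|E|=2(|V|-1)$ then forces both to be spanning trees, so $G_1\in\mathcal{S}(G)$. Your reformulation of the lower bound is also correct and rather nice: with $p_v=\tfrac12(\alpha_v+\beta_v,\alpha_v-\beta_v)$ one indeed has that the first coordinate of $p_v-p_w$ dominates if and only if $(\alpha_v-\alpha_w)(\beta_v-\beta_w)>0$, so realising $(T,G\setminus T)$ as a monochrome decomposition is equivalent to finding two linear orders on $V$ that agree on every edge of $T$ and disagree on every edge of $G\setminus T$.

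The genuine gap is that you do not prove this existence statement, and it is the entire technical content of the lower bound. The paper disposes of it in one line by citing an external result (\cite[Theorem 4.3]{ck20}), which asserts precisely that every decomposition of $G$ into two edge-disjoint spanning trees arises as the monochrome decomposition of some framework in $\ell_\infty^2$; your blind attempt must therefore supply a proof, and it does not. Your induction handles only the case where $G$ has a vertex of degree $2$: there the deletion preserves the union-of-two-spanning-trees structure and the reinsertion argument you sketch does go through (place $\alpha_v$ above or below everything according to whether $\beta_u<\beta_w$ or $\beta_u>\beta_w$, then slot $\beta_v$ between $\beta_u$ and $\beta_w$). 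But a graph with $2|V|-2$ edges need only have a vertex of degree at most $3$, and deleting a degree-$3$ vertex does not preserve the hypothesis (one of the two trees loses two edges at that vertex and becomes disconnected). Minimum-degree-$3$ instances genuinely occur beyond the base case --- you name one yourself, $K_5$ minus two disjoint edges --- and for these you offer only a placeholder (``fundamental cycles'' or ``a matroid-theoretic tree-packing construction'') rather than an argument. Until that case is handled, the inequality $a(G,\ell_\infty^2)\geq\min\{a(T),a(G\setminus T)\}$ is established only for the trees $T$ that happen to arise from some framework, which is exactly the upper-bound direction restated; the proof of the proposition is therefore incomplete.
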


\begin{proof}
    By \Cref{t:l infinity decomp}, there exists a monochrome subgraph decomposition $(G_1,G_2)\in\mathcal{M}(G,\ell_\infty^2)$ such that $a(G,\ell_\infty^2)=\min\,\{a(G_1),\,a(G_2)\}$.
    If either $G_1$ or $G_2$ is not connected then, by \Cref{l:fiedler}$(i)$, $\min\,\{a(G_1),a(G_2)\}=0$. If $G_1$ and $G_2$ are both connected then they are both spanning trees since $G$ contains exactly $2(|V|-1)$ edges. In particular, the monochrome subgraph $G_1$ lies in $\mathcal{S}(G)$. 
    Thus,     $a(G,\ell_\infty^2) \leq \max_{T\in\mathcal{S}(G)}\,\min \,\{ a(T),\, a(G\backslash T)\}$.
    %The result now follows from \Cref{t:l infinity decomp}.

    For the reverse inequality, let $T\in\mathcal{S}(G)$. 
    By \cite[Theorem 4.3]{ck20}, there exists a framework $(G,p)$ in $\ell_\infty^2$ such that the induced monochrome subgraph decomposition for $(G,p)$ is the pair $(T,G\backslash T)$. Thus, by \Cref{t:l infinity decomp},
    $a(G,\ell_\infty^2) \geq \min\,\{ a(T),\, a(G\backslash T)\}$.
\end{proof}

\begin{proposition}
\label{p:K4}
    $a(K_4,\ell_\infty^2)=a(P_4)=2-\sqrt{2}$.
\end{proposition}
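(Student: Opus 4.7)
The plan is to apply Proposition \ref{p:2dminrigidellinf} directly, after verifying that $K_4$ decomposes into two edge-disjoint spanning trees. Since $K_4$ has $6 = 2(|V|-1)$ edges, any partition of $E(K_4)$ into two spanning trees is exactly a decomposition of the edge set into two $3$-edge forests, each of which is connected on all $4$ vertices.

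First I would enumerate $\mathcal{S}(K_4)$. A spanning tree of $K_4$ is either a path $P_4$ or a star $K_{1,3}$. If $T = K_{1,3}$ with centre $v$, then the complement $K_4 \setminus T$ is the triangle on the remaining three vertices, which is not a tree. If $T = P_4$, say with edges $\{v_1v_2, v_2v_3, v_3v_4\}$, then the complementary edge set is $\{v_1v_3, v_1v_4, v_2v_4\}$, which is the path $v_3 - v_1 - v_4 - v_2$, another copy of $P_4$. Hence every element of $\mathcal{S}(K_4)$ is a copy of $P_4$ whose complement is also $P_4$.

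Applying Proposition \ref{p:2dminrigidellinf}, we therefore obtain
\begin{equation*}
    a(K_4,\ell_\infty^2) = \max_{T \in \mathcal{S}(K_4)} \min\{a(T),\,a(K_4 \setminus T)\} = a(P_4).
\end{equation*}
Finally I would invoke Example \ref{ex:fiedler2}(i) to compute $a(P_4) = 2(1-\cos(\pi/4)) = 2-\sqrt{2}$, giving the claimed value.

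The argument is essentially a direct application of the structural result in Proposition \ref{p:2dminrigidellinf} together with the small-case classification of spanning-tree decompositions of $K_4$. The only potential obstacle is making sure the star decomposition is properly ruled out, but this is immediate since the complement of a star in $K_4$ is a $3$-cycle.
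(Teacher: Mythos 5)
Your proof is correct and follows essentially the same route as the paper: both reduce the computation to \Cref{p:2dminrigidellinf}, observe that every spanning tree of $K_4$ whose complement is also a spanning tree must be a copy of $P_4$ (ruling out the star, whose complement is a triangle), and conclude via \Cref{ex:fiedler2}$(i)$ that $a(K_4,\ell_\infty^2)=a(P_4)=2(1-\cos(\pi/4))=2-\sqrt{2}$. Your version merely spells out the enumeration of $\mathcal{S}(K_4)$ in more detail than the paper does.
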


\begin{proof}
The complete graph $K_4$ is a union of two edge-disjoint spanning paths with 4 vertices.
Moreover, every spanning tree in $\mathcal{M}(K_4)$ is isomorphic to the path graph $P_4$.
Thus, by \Cref{p:2dminrigidellinf} and \Cref{ex:fiedler2}$(i)$,
$a(K_4,\ell_\infty^2) = a(P_4) = 2(1-\cos(\pi/4))$.
\end{proof}

In the proof of the following proposition, $K_3^{++}$ denotes the {\em bull graph} obtained by adjoining two degree one vertices to the complete graph $K_3$ such that the two new edges are non-adjacent. See the leftmost graph in \Cref{fig:list} for an illustration.

\begin{proposition}
\label{p:K5}
    $a(K_5,\ell_\infty^2)=a(K_3^{++})=\frac{1}{2}(5-\sqrt{13})$.
\end{proposition}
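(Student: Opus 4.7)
The strategy is to apply \Cref{t:l infinity decomp}, which gives
\[a(K_5,\ell_\infty^2) = \max_{(G_1,G_2)\in\mathcal{M}(K_5,\ell_\infty^2)}\,\min\{a(G_1),a(G_2)\}.\]
For the lower bound, the framework in \Cref{ex:K5} has both its monochrome subgraphs isomorphic to the bull $K_3^{++}$, as can be read off the figure: one part is the triangle on $\{v_2,v_3,v_4\}$ with pendants $v_1v_2$ and $v_4v_5$, and its complement in $K_5$ turns out to be the triangle on $\{v_1,v_3,v_5\}$ with pendants $v_1v_4$ and $v_2v_5$. Hence $a(K_5,\ell_\infty^2) \geq a(K_3^{++})$. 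To compute $a(K_3^{++})$, I would exploit the $\mathbb{Z}_2$ automorphism of $K_3^{++}$ that swaps the two pendants together with the two triangle-vertices to which they attach. This block-diagonalises $L(K_3^{++})$ into a $3\times 3$ symmetric block with characteristic polynomial $-\lambda(\lambda^2-5\lambda+5)$, and a $2\times 2$ antisymmetric block with characteristic polynomial $\lambda^2-5\lambda+3$. The full eigenvalue list is therefore $0,\ (5-\sqrt{13})/2,\ (5-\sqrt{5})/2,\ (5+\sqrt{5})/2,\ (5+\sqrt{13})/2$, and since $\sqrt{5}<\sqrt{13}<5$ this gives $a(K_3^{++})=(5-\sqrt{13})/2$.

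For the matching upper bound, I would show that every $(G_1,G_2)\in\mathcal{M}(K_5,\ell_\infty^2)$ satisfies $\min\{a(G_1),a(G_2)\} \leq (5-\sqrt{13})/2$. Several reductions apply at once: both $G_i$ must be connected (else the min is $0$ by \Cref{l:fiedler}(i)); neither can be $C_5$ (by \Cref{t:oddhole}); and both must have maximum degree at most $3$, for otherwise the complement in $K_5$ has an isolated vertex. Combined with $|E(G_1)|+|E(G_2)|=10$, only the edge-splits $\{4,6\}$ and $\{5,5\}$ remain. In the $\{4,6\}$ split the $4$-edge side is a spanning tree of max degree at most $3$, hence either $P_5$ or the unique $5$-vertex tree with a degree-$3$ vertex; using \Cref{ex:fiedler2}(i) for the former and a $\mathbb{Z}_2$-symmetry reduction for the latter, both have algebraic connectivity strictly below $(5-\sqrt{13})/2$, so the min is too.

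In the $\{5,5\}$ split, the connected $5$-vertex, $5$-edge graphs of max degree at most $3$, excluding $C_5$, are the tadpole $C_4^+$, the bull $K_3^{++}$, and the graph $H$ obtained by attaching a pendant path of length two to one vertex of a triangle. A direct inspection shows that in $K_5$ the bull's complement is again a bull, while the tadpole and $H$ are mutual complements; hence only two pair-types can occur, namely bull+bull (which realises $(5-\sqrt{13})/2$) and tadpole+$H$. For the latter, the same $\mathbb{Z}_2$-symmetry reduction applied to $H$ shows that $a(H)$ is the smallest positive root of $\mu^3-7\mu^2+13\mu-5$, which lies in $(0.5,0.6)$ and so is well below $(5-\sqrt{13})/2\approx 0.697$. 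The principal obstacle is the bookkeeping of the case analysis rather than any single computation: each Laplacian spectrum reduces after symmetry to the roots of a polynomial of degree at most three, but the argument requires careful enumeration of the admissible $5$-vertex graphs and systematic verification of their complements in $K_5$.
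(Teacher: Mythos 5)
Your proof is correct and follows essentially the same route as the paper: the lower bound via the bull-bull decomposition of \Cref{ex:K5}, and the upper bound via \Cref{t:l infinity decomp} together with an enumeration of the admissible connected spanning decompositions of $K_5$, using \Cref{t:oddhole} to rule out $C_5$. Your version merely organises the case analysis more explicitly (by edge-split and complementation in $K_5$, with symmetry-reduced spectra) where the paper defers to a figure and ``direct calculation,'' and all of your computations check out.
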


\begin{proof}
By \Cref{t:l infinity decomp}, $a(K_5,\ell_\infty^2) = \max_{(G_1,G_2) \in \mathcal{M}(K_5,\ell_\infty^2)}\, \min \{a(G_1),a(G_2)\}$.
Consider the framework $(K_5,p)$ in $\ell_\infty^2$ presented in \Cref{ex:K5}.
Note that the induced monochrome subgraphs $G_1$ and $G_2$ are both isomorphic to the bull graph $K_3^{++}$.
Thus, by  \Cref{t:l infinity decomp}, $a(K_5,\ell_\infty^2)\geq \min_{i=1,2} a(G_i) = a(K_3^{++}) = \frac{1}{2}(5-\sqrt{13})$
where the last equality follows by a direct calculation of the eigenvalues of the Laplacian matrix $L(K_3^{++})$.
To see that equality holds, note that every decomposition of $K_5$ into a pair of edge-disjoint connected spanning subgraphs will include one of the  graphs listed in \Cref{fig:list}.
For the second, third and fourth graphs in the list, a direct calculation shows that the algebraic connectivity is strictly less than that of the bull graph $K_3^{++}$.  
The fifth graph in the list is an odd cycle and so, by \Cref{t:oddhole}, this graph cannot arise in any monochrome subgraph decomposition of  $K_5$ in $\ell_\infty^2$. 
\end{proof}

\begin{figure}
    \centering
    \begin{tikzpicture}[scale=0.5]

% Coordinates of the vertices of K_3
\coordinate (v1) at (2,2);
\coordinate (v3) at (3,0);
\coordinate (v2) at (1,0);

% Coordinates of the new degree-1 vertices
\coordinate (v5) at (4,1);
\coordinate (v4) at (0,1);

% Draw the vertices of K_3
\fill (v1) circle (3pt) node[left] {$v_1$};
\fill (v2) circle (3pt) node[below] {$v_2$};
\fill (v3) circle (3pt) node[below] {$v_3$};

% Draw the new degree-1 vertices
\fill (v4) circle (3pt) node[above] {$v_4$};
\fill (v5) circle (3pt) node[above] {$v_5$};

% Draw the edges of K_3
\draw (v1) -- (v2);
\draw (v1) -- (v3);
\draw (v2) -- (v3);

% Add the edges for the new vertices (nonadjacent)
\draw (v2) -- (v4); % Edge from v1 to v4
\draw (v3) -- (v5); % Edge from v3 to v5

\end{tikzpicture}
       \begin{tikzpicture}[scale=0.5]

% Coordinates of the vertices
\coordinate (v1) at (0,0);
\coordinate (v2) at (2,0);
\coordinate (v3) at (4,0);
\coordinate (v4) at (4,2);
\coordinate (v5) at (2,2);

% Draw the vertices
\fill (v1) circle (3pt) node[below] {$v_1$};
\fill (v2) circle (3pt) node[below] {$v_2$};
\fill (v3) circle (3pt) node[below] {$v_3$};
\fill (v4) circle (3pt) node[right] {$v_4$};
\fill (v5) circle (3pt) node[left] {$v_5$};

% Draw the edges of the path graph
\draw (v1) -- (v2);
\draw (v2) -- (v3);
\draw (v3) -- (v4);
\draw (v2) -- (v5);
\draw (v4) -- (v5);

\end{tikzpicture}
    \begin{tikzpicture}[scale=0.5]

% Coordinates of the vertices
\coordinate (v1) at (0,2);
\coordinate (v2) at (0,0);
\coordinate (v3) at (2,0);
\coordinate (v4) at (4,0);
\coordinate (v5) at (4,2);

% Draw the vertices
\fill (v1) circle (3pt) node[left] {$v_1$};
\fill (v2) circle (3pt) node[below] {$v_2$};
\fill (v3) circle (3pt) node[below] {$v_3$};
\fill (v4) circle (3pt) node[below] {$v_4$};
\fill (v5) circle (3pt) node[right] {$v_5$};

% Draw the edges of the path graph
\draw (v1) -- (v2);
\draw (v2) -- (v3);
\draw (v3) -- (v4);
\draw (v4) -- (v5);

\end{tikzpicture}
\begin{tikzpicture}[scale=0.5]

% Coordinates of the vertices
\coordinate (v1) at (0,0);
\coordinate (v2) at (2,0);
\coordinate (v3) at (4,0);
\coordinate (v4) at (4,2);
\coordinate (v5) at (2,2);

% Draw the vertices
\fill (v1) circle (3pt) node[below] {$v_1$};
\fill (v2) circle (3pt) node[below] {$v_2$};
\fill (v3) circle (3pt) node[below] {$v_3$};
\fill (v4) circle (3pt) node[right] {$v_4$};
\fill (v5) circle (3pt) node[left] {$v_5$};

% Draw the edges of the path graph
\draw (v1) -- (v2);
\draw (v2) -- (v3);
\draw (v3) -- (v4);
%\draw (v1) -- (v5);
\draw (v2) -- (v5);

\end{tikzpicture}
\begin{tikzpicture}[scale=0.5]

% Coordinates of the vertices
\coordinate (v1) at (1.2,2);
\coordinate (v2) at (0.5,0.8);
\coordinate (v3) at (2,0);
\coordinate (v4) at (3.5,0.8);
\coordinate (v5) at (2.8,2);

% Draw the vertices
\fill (v1) circle (3pt) node[left] {$v_1$};
\fill (v2) circle (3pt) node[left] {$v_2$};
\fill (v3) circle (3pt) node[below] {$v_3$};
\fill (v4) circle (3pt) node[right] {$v_4$};
\fill (v5) circle (3pt) node[right] {$v_5$};

% Draw the edges of the path graph
\draw (v1) -- (v2);
\draw (v2) -- (v3);
\draw (v3) -- (v4);
\draw (v4) -- (v5);
\draw (v1) -- (v5);

\end{tikzpicture}

    \caption{List of graphs in the proof of \Cref{p:K5}.}
    \label{fig:list}
\end{figure}

\subsection{Sparse graphs in \texorpdfstring{$\ell_\infty^d$}{l-infinity}}
\label{s:sparse}
If $G=(V,E)$ is a graph with $n:=|V|\geq d+1$ and at most $kn$ edges then, by \Cref{thm:upperbound} and the handshaking lemma,
\begin{equation*}
	a(G,\ell_\infty^d) \leq \frac{n}{n-1}  \biggl\lfloor\frac{2k}{d} \biggr\rfloor = (1+ o(1)) \biggl\lfloor\frac{2k}{d} \biggr\rfloor. 
\end{equation*}
In particular, if $k=d$ then $a(G,\ell_\infty^d) \leq 2 +o(1)$.
The following result improves on this latter bound.

\begin{theorem}
\label{t: sparse graphs}
    Let $G=(V,E)$ be a graph with at least $d+1$ vertices where  $d\geq 2$. If $|E| \leq d|V|$ then, 
    \begin{enumerate}[(i)]
    \item $a(G,\ell_\infty^d) \leq 1$.
    \item $a(G, \ell_\infty^d)=1$ if and only if $d=2$ and $G$ is the octahedral graph $K_{2,2,2}$.
\end{enumerate}
\end{theorem}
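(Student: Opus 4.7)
My approach is to apply \Cref{t:l infinity decomp} to both parts, reducing each to a question about monochrome decompositions of $G$ into $d$ spanning subgraphs with prescribed algebraic connectivities.

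For part (i), I will argue by contradiction: if $a(G,\ell_\infty^d) > 1$, then \Cref{t:l infinity decomp} gives a realizable decomposition $(G_1,\ldots,G_d)$ with every $a(G_i) > 1$. By \Cref{l:kirkland} each $G_i$ has no cut vertex and is hence 2-connected, so $|E(G_i)| \geq |V|$. The assumption $|E| \leq d|V|$ forces equality throughout, making each $G_i$ a spanning cycle $C_n$ with $n=|V|$. The condition $a(C_n) = 2(1-\cos(2\pi/n)) > 1$ restricts $n$ to $\{3,4,5\}$. The cases $n=3,4$ violate the trivial bound $|E|=dn \leq \binom{n}{2}$ for $d \geq 2$, while $n=5$ forces $d=2$ and $G=K_5$, so $G_i = C_5$ is an induced odd $5$-hole of the complete framework $(K_5,p)$, contradicting \Cref{t:oddhole}.

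For the forward direction of (ii), I would run the same argument with $\min_i a(G_i) = 1$ in place of strict inequality, but must treat the Kirkland equality case separately. If some $G_i$ has a cut vertex $v^*$ adjacent to every other vertex, then all $n{-}1$ edges at $v^*$ lie in a single colour class, so $v^*$ is isolated in every other $G_j$, forcing $a(G_j) = 0$ and a contradiction. Hence every $G_i$ is again 2-connected and equals some spanning cycle $C_n$ with $a(C_n) \geq 1$, i.e.\ $n \in \{3,4,5,6\}$. The odd-hole argument of part (i) excludes $n=5$, and edge counting eliminates $n=3,4$ and $d \geq 3$, leaving $n=6$, $d=2$, $|E|=12$. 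Thus $G$ is $4$-regular on $6$ vertices, its complement is a perfect matching, and so $G = K_{2,2,2}$.

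For the reverse direction of (ii), it suffices, by part (i), to exhibit a framework $(K_{2,2,2},p)$ in $\ell_\infty^2$ whose two monochrome subgraphs are Hamilton $6$-cycles; this gives $a(K_{2,2,2},\ell_\infty^2) \geq a(C_6) = 1$. Labelling the vertices so that the non-edges are $\{1,4\}, \{2,5\}, \{3,6\}$, I would choose positions with rapidly increasing scale such as
\[
p_1 = (0,0),\ p_2 = (10,1),\ p_3 = (10,11),\ p_4 = (20,2),\ p_5 = (99.5,100),\ p_6 = (205,200),
\]
and verify directly that the $x$-heavy edges form the cycle $1{-}2{-}4{-}3{-}5{-}6{-}1$ while the $y$-heavy edges form $1{-}3{-}2{-}6{-}4{-}5{-}1$. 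I expect this explicit construction to be the main obstacle, since the more symmetric arrangements of $K_{2,2,2}$ (regular hexagons, antipodal projections of the octahedron) tend to produce a decomposition into a $C_6$ together with two triangles rather than two Hamilton $6$-cycles, and one must break this symmetry carefully.
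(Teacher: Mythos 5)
Your proof is correct and follows essentially the same route as the paper's: reduce via \Cref{t:l infinity decomp} to bounding $\min_i a(G_i)$ over monochrome decompositions, use \Cref{l:kirkland} (with the observation that a colour class containing all edges at a vertex isolates that vertex in the other classes) to force each $G_i$ to be $2$-connected and hence a spanning cycle, and then eliminate the small cases, with \Cref{t:oddhole} killing the $C_5$ decomposition of $K_5$. The only substantive difference is that where the paper delegates — citing \Cref{p:K5} for the $K_5$ case and an external example for the realization of $K_{2,2,2}$ with two monochrome Hamilton $6$-cycles — you argue directly from \Cref{t:oddhole} and supply explicit coordinates; I have checked that your placement does produce the monochrome cycles $1\text{--}2\text{--}4\text{--}3\text{--}5\text{--}6\text{--}1$ and $1\text{--}3\text{--}2\text{--}6\text{--}4\text{--}5\text{--}1$ with no ties $|x|=|y|$ on any edge, so your version is self-contained where the paper is not.
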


\begin{proof}
     By \Cref{t:l infinity decomp}, it suffices to show that for each monochrome subgraph decomposition $(G_1,\ldots,G_d) \in \mathcal{M}(G,\ell_\infty^d)$,
    we have $\min_{i \in [d]} a(G_i) \leq 1$ (with equality only if $d=2$ and $G=K_{2,2,2}$).

    Let $(G_1,\ldots,G_d) \in \mathcal{M}(G,\ell_\infty^d)$.
    If any $G_j$ is disconnected then, by \Cref{l:fiedler}$(i)$, 
    $a(G_j) = 0$ and so $\min_{i \in [d]} a(G_i) =0$.
    Suppose instead that each $G_i$ is connected. Note that, for each $j\in[d]$, there is no vertex in $G_j$ which is adjacent to every other vertex; indeed, if $v$ were such a vertex then $v$ would be an isolated vertex in every other monochrome subgraph $G_i$, $i \neq j$.
    Thus, if $G_j$ contains a cut vertex for some $j\in[d]$ then, by \Cref{l:kirkland}, $\min_{i \in [d]} a(G_i)\leq a(G_j)<1$.

    Now suppose that none of the monochrome subgraphs $G_1,\ldots,G_d$ contain a cut vertex. Since $|E|\leq d|V|$ it follows that each $G_i$ is a cycle of length $n=|V|$.
    Thus, by \Cref{ex:fiedler2}$(ii)$,
    \begin{equation}
    \label{eq:sparse}
        \min_{i \in [d]}\, a(G_i)  = a(C_n) = 2(1-\cos(2 \pi/n)).
    \end{equation}
    It follows that $\min_{i \in [d]} a(G_i) \leq 1$ if $|V| \geq 6$, with strict inequality if $|V|\geq 7$.

    Suppose further that $|V| \leq 6$.
    As $G$ is an edge-disjoint union of $d$ cycles, it follows that $d=2$, $|V| \in \{5,6\}$ and $G$ is 4-regular.
    The only two such graphs are the complete graph $K_5$ and the octahedron graph $K_{2,2,2}$. 
    By \Cref{p:K5}, $a(K_5,\ell_\infty^2)<1$.
    By the above, $a(K_{2,2,2},\ell_\infty^2)\leq 1$.
    It is shown in \cite[Example 7.10]{dewar2025uniquelyrealisablegraphspolyhedral} that there exists a framework $(K_{2,2,2},p)$ in $\ell_\infty^2$ such that each monochrome subgraph is a cycle of length 6.
    Thus,  by \Cref{t:l infinity decomp} and \Cref{ex:fiedler2}$(ii)$,
     \begin{equation*}
         a(K_{2,2,2}, \ell_\infty^2) \geq a(C_6) = 2(1-\cos(\pi/3)) =1.
     \end{equation*}
\end{proof}

We can make further improvements when $|E| <d|V|$ and the maximal degree is low.
For this, we require the following result of Kolokolnikov.

\begin{theorem}[{\cite[Theorem 1.2]{KOLO}}]\label{thm:KOLO}
    Let $T$ be a tree with $n$ vertices and maximal degree $\Delta$.
    Then
    \begin{equation*}
        a(T) \leq \frac{2(\Delta - 2)}{n} +  \frac{C_{\Delta} \log n}{n^2},
    \end{equation*}
    where the value of $C_{\Delta}>0$ is dependent only on $\Delta$.
\end{theorem}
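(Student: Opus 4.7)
The plan is to apply the Courant-Fischer variational principle,
\[
a(T) = \min\left\{\frac{\sum_{uv \in E(T)}(f(u)-f(v))^2}{\sum_{v\in V(T)} f(v)^2} : f\in \mathbb{R}^{V(T)},\ f\perp \mathbf{1},\ f\neq 0\right\},
\]
and construct an explicit test function $f$ whose Rayleigh quotient achieves the claimed bound.

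The first step is to locate a vertex $v_0\in V(T)$ of degree $\Delta$ and consider the $\Delta$ subtrees $T_1,\ldots,T_\Delta$ obtained by deleting $v_0$, with sizes $n_1,\ldots,n_\Delta$ summing to $n-1$. Set $f(v_0)=0$ and $f\vert_{T_i}=c_i$ for constants $c_i$ chosen so that $\sum_i n_i c_i=0$; only the $\Delta$ edges incident to $v_0$ contribute to the Dirichlet energy, yielding a Rayleigh quotient of $(\sum_i c_i^2)/(\sum_i n_i c_i^2)$. A Lagrange multiplier calculation shows that when the subtree sizes are comparable, $n_i \approx n/\Delta$, this ratio is $\Theta(\Delta/n)$, which already gives an upper bound of the correct order of magnitude. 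If the profile $(n_i)$ is unbalanced, one rebalances by grouping subtrees.

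To sharpen the constant from $\Delta/n$ to $2(\Delta-2)/n$, the natural refinement is to allow $f$ to vary across several high-degree vertices rather than just one. Specifically, if one can locate two adjacent vertices $v_0,v_1$ of high degree (which is forced whenever there are many degree-$\Delta$ vertices), then a piecewise-constant $f$ with a single jump across the edge $v_0v_1$ effectively ``cancels'' one contribution at each of $v_0$ and $v_1$, producing the $\Delta-2$ factor instead of $\Delta$. More generally, one can combine this cut-vertex construction with a slowly varying test function along a long path through $T$ (obtained by discretising the Fiedler eigenfunction of a path graph), balancing the Dirichlet energy along the spine against the mass of hanging subtrees. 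The $O(\log n/n^2)$ correction then absorbs the lower-order terms from the discrete-versus-continuous passage and from the boundary behaviour of the test function near the endpoints of the chosen path.

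The main obstacle is pinning down the precise leading constant $2(\Delta-2)$, as opposed to some generic $C_\Delta$. This requires a careful combinatorial optimisation over the subtree-size profile $(n_1,\ldots,n_\Delta)$, together with identifying an extremal tree (likely a caterpillar with maximal-length spine or a double-star-like configuration) on which the bound is saturated, and verifying that no clever choice of test function can do better there. A further subtlety is handling trees in which few vertices attain degree exactly $\Delta$: in that case one runs the argument at the highest-degree vertex available and uses a Weyl-type perturbation (\Cref{t:weyl}) to account for the degree deficit, at the cost of the logarithmic term.
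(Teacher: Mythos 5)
This statement is quoted from the literature (Kolokolnikov, \cite[Theorem 1.2]{KOLO}); the paper gives no proof of it, so there is nothing internal to compare your argument against. Judged on its own terms, your proposal is a plan rather than a proof, and it has genuine gaps. The central one is that the leading constant $2(\Delta-2)/n$ is never established: you explicitly defer it (``the main obstacle is pinning down the precise leading constant''), describe the needed combinatorial optimisation without carrying it out, and invoke an unidentified extremal tree. A test-function argument only ever yields an upper bound for the \emph{specific} tree at hand, so ``identifying an extremal tree and verifying no test function does better there'' is not a step in an upper-bound proof at all --- you must exhibit, for \emph{every} tree of maximal degree $\Delta$, a test function whose Rayleigh quotient is at most $2(\Delta-2)/n + C_\Delta \log n/n^2$.

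Your base construction also fails quantitatively. Taking $f$ piecewise constant on the branches at a single degree-$\Delta$ vertex $v_0$ gives the ratio $(\sum_i c_i^2)/(\sum_i n_i c_i^2)$, which is $\Theta(\Delta/n)$ only when the $n_i$ are comparable. For a broom (a long path with $\Delta-1$ leaves attached at one end) the profile is $n_1=n-\Delta,\ n_2=\cdots=n_\Delta=1$, and the constrained minimum of that ratio tends to $1$, not to $O(1/n)$; ``rebalancing by grouping subtrees'' reduces to a single edge cut, which gives at best roughly $\Delta^2/((\Delta-1)n)$, strictly worse than $2(\Delta-2)/n$ already for $\Delta=3$. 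The correct order in such cases comes from a slowly varying function along a long path, which you mention but do not quantify, and the claim that the discretisation error and boundary terms are $O(\log n/n^2)$ is asserted, not derived (note that for $\Delta=2$ the entire bound is the error term, so this is where all the content lives for paths). Finally, the proposed use of Weyl's inequality (\Cref{t:weyl}) to ``account for the degree deficit'' is not meaningful: changing which vertex has maximal degree is not a small-norm perturbation of the Laplacian, so no $O(\log n/n^2)$ control follows. As it stands the argument proves only $a(T)=O_\Delta(1/n)$ for trees with a balanced centroid, which is far short of the stated theorem.
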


\begin{corollary}\label{cor:maximaldegreelow}
    Let $G$ be a graph with at least $d+1$ vertices. If $|E| < d|V|$ then,
    \begin{equation*}
        a(G,\ell_\infty^d) \leq \frac{2(\Delta_G - d - 1)}{|V|} +  \frac{C \log |V|}{|V|^2},
    \end{equation*}
    where, given $C_{\Delta}$ is the constant described in \Cref{thm:KOLO}, $C = C_{\Delta_G-d +1}$.
\end{corollary}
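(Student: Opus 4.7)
The plan is to combine \Cref{t:l infinity decomp} with the edge-counting argument used in \Cref{t: sparse graphs} and then invoke Kolokolnikov's bound (\Cref{thm:KOLO}) on a suitably sparse monochrome subgraph. By \Cref{t:l infinity decomp}, it suffices to prove that for every monochrome subgraph decomposition $(G_1,\ldots,G_d)\in\mathcal{M}(G,\ell_\infty^d)$, the quantity $\min_{i\in[d]}a(G_i)$ is at most the claimed right-hand side. If some $G_j$ is disconnected, then $a(G_j)=0$ by \Cref{l:fiedler}(i) and we are done (the right-hand side being non-negative in the interesting range). So assume every $G_j$ is connected.

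Since $\sum_{i\in[d]}|E(G_i)|=|E|<d|V|$, pigeonholing gives some index $i$ with $|E(G_i)|<|V|$. Combined with connectivity (which forces $|E(G_i)|\geq |V|-1$), this produces equality $|E(G_i)|=|V|-1$, so $G_i$ is a spanning tree. The crucial next step is to control $\Delta_{G_i}$. Because every $G_j$ with $j\neq i$ is connected on $|V|\geq 2$ vertices, one has $\deg_{G_j}(v)\geq 1$ for each such $j$ and each vertex $v$. Summing over $j$ and using $\deg_G(v)=\sum_{j\in[d]}\deg_{G_j}(v)$ gives
\begin{equation*}
    \deg_{G_i}(v)\;\leq\;\deg_G(v)-(d-1)\;\leq\;\Delta_G-d+1.
\end{equation*}
Hence $\Delta_{G_i}\leq\Delta_G-d+1$.

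Finally, applying \Cref{thm:KOLO} to the tree $G_i$ with $\Delta:=\Delta_{G_i}$ yields
\begin{equation*}
    a(G_i)\;\leq\;\frac{2(\Delta_{G_i}-2)}{|V|}+\frac{C_{\Delta_{G_i}}\log|V|}{|V|^2}\;\leq\;\frac{2(\Delta_G-d-1)}{|V|}+\frac{C\log|V|}{|V|^2},
\end{equation*}
where the second inequality uses $\Delta_{G_i}-2\leq\Delta_G-d-1$ and the standard convention that the constant $C=C_{\Delta_G-d+1}$ can be taken to dominate $C_\delta$ for every $\delta\leq \Delta_G-d+1$.

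The main (modest) obstacle is this last bookkeeping step concerning the constant: \Cref{thm:KOLO} as stated only guarantees a constant depending on the actual maximum degree of the tree at hand, so one must either invoke monotonicity of the family $\{C_\Delta\}$ in $\Delta$ or redefine $C:=\max_{\delta\leq\Delta_G-d+1}C_\delta$ so that it genuinely depends only on $\Delta_G-d+1$. Everything else is a clean consequence of \Cref{t:l infinity decomp} and the elementary observation that each connected monochrome subgraph contributes at least one to the degree of every vertex.
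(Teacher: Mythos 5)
Your proposal is correct and follows essentially the same route as the paper: both arguments show that some monochrome subgraph must be a spanning tree (you by pigeonholing edge counts directly, the paper by counting edges left over after choosing spanning trees in each connected $G_i$), bound its maximum degree by $\Delta_G-d+1$ using the fact that the other $d-1$ connected subgraphs each contribute at least one to every vertex degree, and then apply \Cref{thm:KOLO}. Your remark about the constant $C_\Delta$ needing to be monotone (or replaced by a maximum) is a point the paper's proof glosses over, so it is a welcome clarification rather than a deviation.
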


\begin{proof}
    By \Cref{t:l infinity decomp},
    we can fix a decomposition $(G_1,\ldots,G_d)$ of $G$ where $a(G,\ell_\infty^d) = \min_{i \in [d]}a(G_i)$.
    We may restrict to the case where $a(G,\ell_\infty^d) > 0$,
    and hence each graph $G_i$ is connected.
     For each $G_i$, choose a spanning tree $T_i$.
     As $T_1,\ldots,T_d$ are edge-disjoint spanning trees in $G$,
     \begin{equation*}
         \left| E \setminus \bigcup_{i=1}^d E(T_i) \right| = |E| -\sum_{i=1}^d (|V| - 1) < d|V| - d(|V|-1) = d.
     \end{equation*}
     Hence there are at most $d-1$ edges not contained within one of the spanning trees.
	%Using the same argument as given in \Cref{t: sparse graphs},
	It follows that at least one of the monochrome subgraphs, $G_1$ say, is a tree.
	The maximal degree of $G_1$ is at most $\Delta_G - d +1$ since each graph $G_i$ must have positive minimal degree to be connected. 
    %monochromatic subgraph has minimal degree 1 or more).
    The result now follows from \Cref{thm:KOLO} applied to $G_1$.
\end{proof}

\subsection{Further calculations of \texorpdfstring{$a(K_n,\ell_\infty^d)$}{a(K,l-infinity)}}
The following are a selection of graphs whose algebraic connectivity in $\ell_\infty^d$ can be computed.

\begin{proposition}
\label{p:K6}
    $a(K_6,\ell_\infty^2)=1$.
\end{proposition}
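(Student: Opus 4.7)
The plan is to prove $a(K_6, \ell_\infty^2) = 1$ via matching lower and upper bounds.

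For the lower bound, I observe that $K_{2,2,2}$ is a spanning subgraph of $K_6$, so by \Cref{c:increasing} and \Cref{t: sparse graphs}(ii),
\[
a(K_6, \ell_\infty^2) \geq a(K_{2,2,2}, \ell_\infty^2) = 1.
\]

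For the upper bound I aim to show via \Cref{t:l infinity decomp} that every monochrome decomposition $(G_1,G_2) \in \mathcal{M}(K_6,\ell_\infty^2)$ satisfies $\min\{a(G_1),a(G_2)\} \leq 1$. Suppose instead that $a(G_i) > 1$ for both $i$. Then each $G_i$ is connected and, by \Cref{l:kirkland}, $2$-connected (any cut vertex forces $a(G_i) \leq 1$). Since $K_6$ is $5$-regular, each vertex has degree $2$ or $3$ in each $G_i$. Letting $k$ denote the number of degree-$2$ vertices of $G_1$, edge counting gives $|E(G_1)| = 9 - k/2$, so $k \in \{0,2,4,6\}$; swapping $G_1$ and $G_2$ if needed, I may assume $k \in \{0,2\}$.

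When $k = 0$, the graph $G_2$ is a connected $2$-regular graph on $6$ vertices, hence $G_2 = C_6$ and $a(C_6) = 1$ by \Cref{ex:fiedler2}(ii), a contradiction. When $k = 2$, the graph $G_2$ is $2$-connected with exactly two vertices of degree $3$ and four of degree $2$, so a standard suppression argument forces $G_2$ to be a theta graph $\theta(l_1,l_2,l_3)$ consisting of three internally disjoint simple paths between the two degree-$3$ vertices with $l_1+l_2+l_3=7$ and at most one $l_i = 1$. Of the three candidates $(1,2,4)$, $(1,3,3)$, $(2,2,3)$, the odd-hole-free condition from \Cref{t:oddhole} rules out $(1,2,4)$ and $(2,2,3)$, each of which contains an induced $5$-cycle formed by the shortest path together with the longest path. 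Hence $G_2 = \theta(1,3,3)$.

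The final step is to check $a(\theta(1,3,3)) = 1$ by direct calculation. Exploiting the involution swapping the two length-$3$ paths, the Laplacian $L(\theta(1,3,3))$ block-diagonalises into a $4$-dimensional symmetric block (with eigenvalues $\{0,2,3,5\}$) and a $2$-dimensional antisymmetric block equal to $\bigl(\begin{smallmatrix} 2 & -1 \\ -1 & 2 \end{smallmatrix}\bigr)$ (with eigenvalues $\{1,3\}$); the full spectrum is therefore $\{0,1,2,3,3,5\}$ and $a(\theta(1,3,3))=1$, contradicting $a(G_2) > 1$. The main obstacle is the classification of $2$-connected $6$-vertex graphs with the required degree sequences and the odd-hole-free constraint that pins down $\theta(1,3,3)$; once this graph is isolated, the eigenvalue $1$ drops out cleanly from the $2\times 2$ antisymmetric block.
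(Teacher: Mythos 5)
Your proof is correct and follows essentially the same route as the paper: the identical lower bound via $K_{2,2,2}$, reduction to $2$-connected monochrome pairs, degree/edge counting, use of the odd-hole theorem to pin down the $7$-edge subgraph as $\theta(1,3,3)$, and a direct Fiedler-number computation. The only differences are cosmetic — you invoke \Cref{l:kirkland} rather than \Cref{l:fiedler}(ii) for the cut-vertex step and make the theta-graph classification and the spectrum of $\theta(1,3,3)$ fully explicit, which the paper leaves to the reader.
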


\begin{proof}
    By \Cref{t: sparse graphs} and  \Cref{c:increasing}, $a(K_6,\ell_\infty^2)\geq a(K_{2,2,2},\ell_\infty^2)=1$.
    By \Cref{t:l infinity decomp}, there exists a monochrome subgraph decomposition $(G_1,G_2)\in\mathcal{M}(K_6,\ell_\infty^2)$ such that $a(K_6,\ell_\infty^2)=\min_{i=1,2} a(G_i)$. 
    By \Cref{l:fiedler}$(ii)$, if $G_i$ has vertex connectivity less than $2$ then $a(G_i)\leq 1$. 
    
    Suppose $G_1$ and $G_2$ both have vertex connectivity at least $2$.  
    If $G_i$ has at most 6 edges then it is a $6$-cycle, in which case $a(G_i) = 1$ by \Cref{ex:fiedler2}$(ii)$. 
    
    Suppose $|E(G_i)| \geq 7$ for $i=1,2$.
    Without loss of generality, assume that $|E(G_1)| = 7$ and $|E(G_2)|=8$ (since $|E(K_6)|=15$).
    As $G_1$ is 2-vertex-connected the degree sequence of $G_1$ must be $(2,2,2,2,3,3)$.
    In particular, $G_1$ cannot contain a degree $4$ vertex since this would be a cut vertex in $G_1$. Since $G_1$ does not contain a 5-hole (by \Cref{t:oddhole}), it must be isomorphic to the left hand graph given in \Cref{fig:K6}. 
    This implies $G_2$ is isomorphic to the right hand graph given in \Cref{fig:K6}.
    Direct calculation shows that $a(G_1)=a(G_2)=1$.
\end{proof}

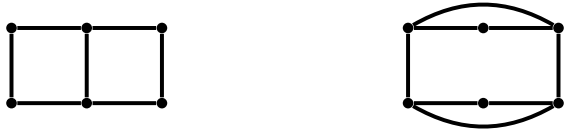
\begin{figure}[t]
\begin{center}
\begin{tikzpicture}[scale=1]
\begin{scope}
		\node[vertex] (1) at (0,0) {};
		\node[vertex] (2) at (1,0) {};
		\node[vertex] (3) at (1,1) {};
		\node[vertex] (4) at (0,1) {};
		\node[vertex] (x) at (2,0) {};
		\node[vertex] (y) at (2,1) {};
		\draw[edge] (1)edge(2);
		\draw[edge] (2)edge(3); 
		\draw[edge] (3)edge(4);  
		\draw[edge] (4)edge(1); 
        
		\draw[edge] (2)edge(x);
		\draw[edge] (x)edge(y); 
		\draw[edge] (y)edge(3); 
\end{scope}
\begin{scope}[xshift=150]
		\node[vertex] (1) at (0,0) {};
		\node[vertex] (2) at (1,0) {};
		\node[vertex] (3) at (1,1) {};
		\node[vertex] (4) at (0,1) {};
		\node[vertex] (x) at (2,0) {};
		\node[vertex] (y) at (2,1) {};
		\draw[edge] (1)edge(2);
		%\draw[edge] (2)edge(3); 
		\draw[edge] (3)edge(4);  
		\draw[edge] (4)edge(1); 
        
		\draw[edge] (2)edge(x);
		\draw[edge] (x)edge(y); 
		\draw[edge] (y)edge(3);

		\draw[edge,bend right] (1)edge(x);
		\draw[edge,bend left] (4)edge(y);
\end{scope}
	\end{tikzpicture}
	\end{center}
	\caption{The monochrome subgraphs $G_1$ (left) and $G_2$ (right) in the proof of \Cref{p:K6}.
    }
	\label{fig:K6}
\end{figure}

In the following, let $T_d$ be the unique tree with $2d$ vertices, diameter $3$ and two adjacent vertices, each with degree $d$ and adjacent to $d-1$ leaf vertices (see \Cref{fig:T3T4} for examples of $T_3$ and $T_4$).

\begin{lemma}
\label{l:K2dinddim}
Let $d\geq 2$.
\begin{enumerate}
\item There exists $p\in\mathcal{W}(K_{2d},\ell_\infty^d)$ such that every monochrome subgraph of the framework $(K_{2d},p)$ is isomorphic to $T_d$.
\item $a(K_{2d},\ell_\infty^d) \geq a(T_d)$.
\item There exists a spanning tree $T$ with maximum degree at most $d$ in the complete graph  $K_{2d}$  such that $a(K_{2d},\ell_\infty^d) = a(T)$.
\end{enumerate}
\end{lemma}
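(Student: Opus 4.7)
For (i), my plan is to construct an explicit framework. Labelling the $2d$ vertices of $K_{2d}$ as $u_1,\ldots,u_d,v_1,\ldots,v_d$, I would set, for a sufficiently large integer $N$,
\[
    p_{u_i} \;=\; N b_i + \sum_{k > i} b_k, \qquad p_{v_i} \;=\; -N b_i - \sum_{k > i} b_k,
\]
and then tabulate each difference $p_a - p_b$ coordinate by coordinate. A direct check shows that for $i<j$ the differences $p_{u_i}-p_{u_j}$ and $p_{v_i}-p_{v_j}$ have largest-magnitude coordinate at index $\min(i,j)$; for $i \neq j$, $p_{u_i}-p_{v_j}$ has largest-magnitude coordinate at index $\max(i,j)$; and $p_{u_i}-p_{v_i}$ has largest-magnitude coordinate at index $i$. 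In each case the gap over the next-largest coordinate is at least $1$ once $N$ is large enough, so $p \in \mathcal{W}(K_{2d},\ell_\infty^d)$. A count of which edges lie in each colour then reveals that the induced monochrome subgraph $G_i$ is the double star with centres $u_i,v_i$, leaves $\{u_{i+1},\ldots,u_d,v_1,\ldots,v_{i-1}\}$ attached to $u_i$ and $\{v_{i+1},\ldots,v_d,u_1,\ldots,u_{i-1}\}$ attached to $v_i$, which is precisely a copy of $T_d$.

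Part (ii) then follows at once from \Cref{t:l infinity decomp}, since $a(K_{2d},\ell_\infty^d) \ge \min_{i \in [d]} a(G_i) = a(T_d)$ for the monochrome subgraphs $G_1,\ldots,G_d$ of the framework in (i). For (iii), I would use that $\mathcal{M}(K_{2d},\ell_\infty^d)$ is finite, so by \Cref{t:l infinity decomp} some decomposition $(G_1,\ldots,G_d)$ attains $a(K_{2d},\ell_\infty^d) = \min_{i \in [d]} a(G_i)$. Since $|E(K_{2d})| = d(2d-1)$, the subgraphs average exactly $2d-1$ edges each; any $G_i$ with fewer than $2d-1$ edges would be disconnected and hence have $a(G_i)=0$ by \Cref{l:fiedler}(i), which is impossible by (ii) as $a(T_d)>0$. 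Thus every $G_i$ is connected with exactly $2d-1$ edges, so each $G_i$ is a spanning tree of $K_{2d}$. The identity $\sum_{i=1}^{d} \deg_{G_i}(w) = 2d-1$ with each summand at least $1$ then forces $\deg_{G_i}(w) \le d$ at every vertex $w$, so each $G_i$ has maximum degree at most $d$. Taking $T = G_{i^*}$ for any index $i^*$ minimising $a(G_i)$ yields the required spanning tree.

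The main obstacle is the coordinate bookkeeping in (i): one must verify the dominant coordinate in each of the $\binom{2d}{2}$ difference vectors and confirm the resulting monochrome subgraphs are copies of $T_d$ with the correct centres and leaves. Once that is done, parts (ii) and (iii) reduce to short applications of \Cref{t:l infinity decomp} and \Cref{l:fiedler}.
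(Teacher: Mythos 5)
Your argument is correct, and parts (ii) and (iii) follow the paper's proof essentially verbatim: (ii) is immediate from (i) together with \Cref{t:l infinity decomp}, and (iii) uses the finiteness of $\mathcal{M}(K_{2d},\ell_\infty^d)$, the edge count $|E(K_{2d})|=d(2d-1)$ forcing each monochrome subgraph in an optimal decomposition to be a spanning tree, and the degree-sum identity $\sum_{i}\deg_{G_i}(w)=2d-1$ with positive summands to bound the maximum degree by $d$. Where you genuinely diverge is part (i): the paper simply cites the construction from the proof of Proposition 3.12 of the uniquely-realisable-graphs paper, whereas you build the framework explicitly via $p_{u_i}=Nb_i+\sum_{k>i}b_k$ and $p_{v_i}=-p_{u_i}$. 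I checked your bookkeeping: for $i<j$ the difference $p_{u_i}-p_{u_j}$ has coordinate magnitudes $N$ at index $i$, $N-1$ at index $j$, and at most $1$ elsewhere; $p_{u_i}-p_{v_j}=p_{u_i}+p_{u_j}$ has magnitudes $N+1$ at $\max(i,j)$, $N$ at $\min(i,j)$, and at most $2$ elsewhere; and $p_{u_i}-p_{v_i}=2p_{u_i}$ is dominated at index $i$. So for $N\geq 2$ every difference has a strictly unique dominant coordinate (which is all that smoothness of $\|\cdot\|_\infty$ requires, as only the attainment set of the maximum matters), each $G_i$ has exactly $2(d-i)+2(i-1)+1=2d-1$ edges, and the centres $u_i,v_i$ each acquire degree $d$ with disjoint leaf sets covering all $2d$ vertices, so $G_i\cong T_d$. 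Your version buys a self-contained proof of (i) at the cost of the coordinate check; the paper's buys brevity by outsourcing it. Both are sound.
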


\begin{proof}
    A point $p\in\mathcal{W}(K_{2d},\ell_\infty^d)$ satisfying $(i)$ is constructed in the proof of \cite[Proposition 3.12]{dewar2025uniquelyrealisablegraphspolyhedral}.
    Statement $(ii)$ follows from $(i)$ and \Cref{t:l infinity decomp}.

   By \Cref{t:l infinity decomp}, there exists a monochrome subgraph decomposition $(G_1,\ldots,G_d)\in\mathcal{M}(K_{2d},\ell_\infty^d)$ such that $a(K_{2d},\ell_\infty^d)=\min_{i\in[d]}\, a(G_i)$.
    By $(ii)$ and Lemma \ref{l:fiedler}$(i)$, each monochrome subgraph $G_i$ is connected. Since the complete graph $K_{2d}$ has $d(2d-1)$ edges, it follows that each $G_i$ is a spanning tree in $K_6$.
    The maximum degree of each of these spanning  trees is at most $d$, as their degrees at each vertex must sum to $2d-1$ and every vertex has positive degree. This proves statement $(i)$.
\end{proof}

\begin{figure}[ht]
\begin{center}
\begin{tikzpicture}[scale=0.5]
		\node[vertex] (1) at (0,0) {};
		\node[vertex] (2) at (1,0) {};
		\node[vertex] (3) at (2,1) {};
		\node[vertex] (4) at (2,-1) {};
		\node[vertex] (5) at (-1,1) {};
		\node[vertex] (6) at (-1,-1) {};
		\draw[edge] (1)edge(2);
		\draw[edge] (1)edge(5);
		\draw[edge] (1)edge(6);
		\draw[edge] (2)edge(3);
		\draw[edge] (2)edge(4);
            \node (label) at (0.5,-2) {$a(T_3) \approx 0.438$.};
	\end{tikzpicture}\qquad\qquad\qquad
\begin{tikzpicture}[scale=0.6]
		\node[vertex] (1) at (0,0) {};
		\node[vertex] (2) at (1,0) {};
		\node[vertex] (3) at (2,1) {};
		\node[vertex] (4) at (2,-1) {};
		\node[vertex] (7) at (2.5,0) {};
		\node[vertex] (5) at (-1,1) {};
		\node[vertex] (6) at (-1,-1) {};
		\node[vertex] (8) at (-1.5,0) {};
		\draw[edge] (1)edge(2);
		\draw[edge] (1)edge(5);
		\draw[edge] (1)edge(6);
		\draw[edge] (1)edge(8);
		\draw[edge] (2)edge(3);
		\draw[edge] (2)edge(4);
		\draw[edge] (2)edge(7);
            \node (label) at (0.5,-2) {$a(T_4) \approx 0.354$.};
	\end{tikzpicture}
	\end{center}
	\caption{The graphs $T_3$ (left) and $T_4$ (right).}
	\label{fig:T3T4}
\end{figure}
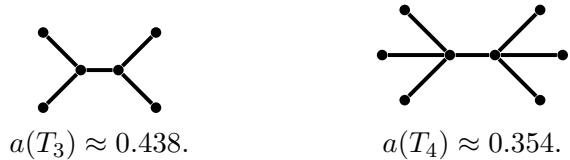

\begin{proposition}
\label{p:K63d}
    $a(K_6,\ell_\infty^3)= a(T_3) \approx 0.438$.    
\end{proposition}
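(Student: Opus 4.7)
The plan is to combine both parts of Lemma \ref{l:K2dinddim} applied with $d=3$. Part (ii) immediately gives the lower bound $a(K_6,\ell_\infty^3) \geq a(T_3)$. For the matching upper bound, part (iii) states that $a(K_6,\ell_\infty^3) = a(T)$ for some spanning tree $T$ of $K_6$ whose maximum degree is at most $3$; it therefore suffices to show that $a(T) \leq a(T_3)$ for every tree on six vertices with maximum degree at most $3$.

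First I would enumerate the admissible trees up to isomorphism. Writing the number of vertices of degrees $1$, $2$ and $3$ as $a$, $b$ and $c$ respectively, the constraints $a + b + c = 6$ and $a + 2b + 3c = 10$ force $(a,b,c) \in \{(2,4,0),(3,2,1),(4,0,2)\}$. A routine case analysis yields exactly four non-isomorphic trees: the path $P_6$, the spider with three internally disjoint paths of lengths $1,1,3$ emanating from a central vertex (call it $S_{113}$), the spider with three internally disjoint paths of lengths $1,2,2$ (call it $S_{122}$), and the double star $T_3$.

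For each tree I would compute the algebraic connectivity by exploiting the pendant- or branch-swap symmetries to block-diagonalise its $6 \times 6$ Laplacian. Example \ref{ex:fiedler2}(i) gives $a(P_6) = 2 - \sqrt{3} \approx 0.268$. A direct calculation for $T_3$ yields $a(T_3) = (5 - \sqrt{17})/2 \approx 0.438$. For $S_{122}$, the branch-swap antisymmetric block contributes the eigenvalue $(3 - \sqrt{5})/2 \approx 0.382$, which one checks against the symmetric block is the smallest positive eigenvalue of the full Laplacian. For $S_{113}$, the leaf-swap antisymmetric block contributes the eigenvalue $1$, and the remaining $5$-dimensional symmetric block has characteristic polynomial factoring as $\lambda(\lambda - 3)(\lambda^3 - 6\lambda^2 + 8\lambda - 2)$, so $a(S_{113})$ equals the smallest root of the cubic $\lambda^3 - 6\lambda^2 + 8\lambda - 2$.

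The only comparison not immediate from closed-form values is $a(S_{113}) < a(T_3)$, and this will be the main obstacle since the cubic has no pleasant closed form. I would verify it symbolically as follows. Set $\alpha := (5 - \sqrt{17})/2 = a(T_3)$; since $\alpha$ is a root of $\lambda^2 - 5\lambda + 2$, we have $\alpha^2 = 5\alpha - 2$ and hence $\alpha^3 = 23\alpha - 10$. Substituting yields
\begin{equation*}
\alpha^3 - 6\alpha^2 + 8\alpha - 2 = (23\alpha - 10) - 6(5\alpha - 2) + 8\alpha - 2 = \alpha > 0,
\end{equation*}
whereas the cubic equals $-2 < 0$ at $\lambda = 0$. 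The intermediate value theorem therefore provides a root in $(0, \alpha)$, so $a(S_{113}) < \alpha = a(T_3)$. Combining the four evaluations proves $a(T) \leq a(T_3)$ for every admissible tree $T$, which together with the lower bound completes the proof.
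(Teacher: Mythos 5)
Your proof is correct and follows essentially the same route as the paper: the lower bound via \Cref{l:K2dinddim}$(ii)$, and the upper bound via \Cref{l:K2dinddim}$(iii)$ together with an enumeration of the four spanning trees of $K_6$ with maximum degree at most $3$ and a comparison of their algebraic connectivities. You simply carry out in full the enumeration and the eigenvalue comparisons (including the nice resultant-style check that the cubic for $S_{113}$ is positive at $\alpha=(5-\sqrt{17})/2$) that the paper leaves as an implicit direct calculation.
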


\begin{proof}
    By \Cref{l:K2dinddim}$(ii)$, $a(K_6,\ell_\infty^3)\geq  a(T_3) \approx 0.438$.
    By \Cref{l:K2dinddim}$(iii)$, there exists a spanninng tree $T$ in $K_6$ with maximum degree at most $3$ such that
    $a(K_6,\ell_\infty^3)=a(T)$.
    There are only four such trees (see \Cref{fig:K63d}), and the one among them with the highest algebraic connectivity is $T_3$.
    Hence $a(K_6,\ell_\infty^3) \leq a(T_3)$, as required.
\end{proof}

\begin{figure}
    \centering
 \begin{tikzpicture}[scale=0.5]

% Coordinates of the vertices
\coordinate (v1) at (0,0);
\coordinate (v2) at (2,0);
\coordinate (v3) at (4,0);
\coordinate (v4) at (4,2);
\coordinate (v5) at (2,2);
\coordinate (v6) at (0,2);

% Draw the vertices
\fill (v1) circle (4pt) node[below] {};
\fill (v2) circle (4pt) node[below] {};
\fill (v3) circle (4pt) node[below] {};
\fill (v4) circle (4pt) node[right] {};
\fill (v5) circle (4pt) node[above] {};
\fill (v6) circle (4pt) node[left] {};

% Draw the edges of the path graph
\draw[edge] (v1) -- (v6) -- (v5) -- (v2) -- (v3) -- (v4);

\end{tikzpicture}
\quad
       \begin{tikzpicture}[scale=0.5]

% Coordinates of the vertices
\coordinate (v1) at (0,0);
\coordinate (v2) at (2,0);
\coordinate (v3) at (4,0);
\coordinate (v4) at (4,2);
\coordinate (v5) at (2,2);
\coordinate (v6) at (0,2);

% Draw the vertices
\fill (v1) circle (4pt) node[below] {};
\fill (v2) circle (4pt) node[below] {};
\fill (v3) circle (4pt) node[below] {};
\fill (v4) circle (4pt) node[right] {};
\fill (v5) circle (4pt) node[above] {};
\fill (v6) circle (4pt) node[left] {};

% Draw the edges of the path graph
\draw[edge] (v6) -- (v1) -- (v2) -- (v3) -- (v4);
\draw[edge] (v1) -- (v5);

\end{tikzpicture}
\quad
\begin{tikzpicture}[scale=0.5]
  % Coordinates of the vertices
\coordinate (v1) at (0,0);
\coordinate (v2) at (2,0);
\coordinate (v3) at (4,0);
\coordinate (v4) at (4,2);
\coordinate (v5) at (2,2);
\coordinate (v6) at (0,2);

% Draw the vertices
\fill (v1) circle (4pt) node[below] {};
\fill (v2) circle (4pt) node[below] {};
\fill (v3) circle (4pt) node[below] {};
\fill (v4) circle (4pt) node[right] {};
\fill (v5) circle (4pt) node[above] {};
\fill (v6) circle (4pt) node[left] {};

% Draw the edges of the path graph
\draw[edge] (v6) -- (v1) -- (v2) -- (v3) -- (v4);
\draw[edge] (v5) -- (v2);

\end{tikzpicture}
\quad
\begin{tikzpicture}[scale=0.5]

% Coordinates of the vertices
\coordinate (v1) at (0,0);
\coordinate (v2) at (1,1);
\coordinate (v3) at (4,0);
\coordinate (v4) at (4,2);
\coordinate (v5) at (3,1);
\coordinate (v6) at (0,2);

% Draw the vertices
\fill (v1) circle (4pt) node[below] {};
\fill (v2) circle (4pt) node[below] {};
\fill (v3) circle (4pt) node[below] {};
\fill (v4) circle (4pt) node[right] {};
\fill (v5) circle (4pt) node[above] {};
\fill (v6) circle (4pt) node[left] {};

% Draw the edges of the path graph
\draw[edge] (v1) -- (v2);
\draw[edge] (v5) -- (v3);
\draw[edge] (v2) -- (v5);
\draw[edge] (v2) -- (v6);
\draw[edge] (v4) -- (v5);

\end{tikzpicture}

    \caption{List of trees with $6$ vertices and maximum degree $3$ in the proof of \Cref{p:K63d}.}
    \label{fig:K63d}
\end{figure}
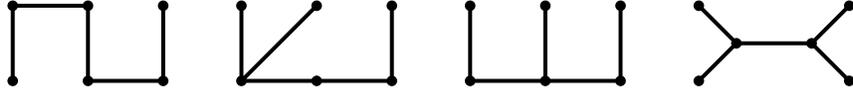

\begin{conjecture}
\label{p:K84d}
    $a(K_{2d},\ell_\infty^d)= a(T_d)$ for $d\geq 4$.
\end{conjecture}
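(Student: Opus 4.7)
The plan is to combine \Cref{l:K2dinddim}(ii) and (iii) to reduce the conjecture to a purely combinatorial statement. Part (ii) already gives $a(K_{2d}, \ell_\infty^d) \geq a(T_d)$, and part (iii) furnishes a spanning tree $T$ of $K_{2d}$ with maximum degree at most $d$ such that $a(K_{2d}, \ell_\infty^d) = a(T)$. So it suffices to show that among all trees on $2d$ vertices with maximum degree at most $d$, the double star $T_d$ attains the largest algebraic connectivity.

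The first observation is that $T_d$ is the unique tree in this class of diameter at most $3$. Indeed, diameter $2$ forces $T$ to be a star $K_{1,2d-1}$, whose centre has degree $2d-1 > d$, while diameter $3$ forces $T$ to be a double star with centres of degrees $a$ and $2d-a$; the constraint $\Delta(T) \leq d$ then forces $a = d$, giving exactly $T_d$. So every competitor $T \neq T_d$ in our class satisfies $\mathrm{diam}(T) \geq 4$.

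The main task is therefore to prove the strict inequality $a(T) < a(T_d)$ for every tree $T$ on $2d$ vertices with $\Delta(T) \leq d$ and $\mathrm{diam}(T) \geq 4$. The natural approach is a grafting argument in the spirit of Kirkland, Fallat, Pati and Shao: given such a tree $T$, one identifies a peripheral pendant subtree which, when moved closer to the Fiedler characteristic vertex or edge of $T$, strictly increases the algebraic connectivity while preserving the cap $\Delta \leq d$. Iterating such moves must terminate at $T_d$, the unique minimum-diameter element of the class, yielding the required bound.

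The principal obstacle lies in establishing a grafting lemma of sufficient generality: the classical grafting results typically require prior knowledge of the Fiedler characteristic, and one must carefully verify that every admissible grafting step is compatible with the degree cap $\Delta \leq d$. For the small cases ($d = 4, 5$) one could bypass this by enumerating the finitely many isomorphism classes of trees on $2d$ vertices with $\Delta \leq d$ and computing $a(T)$ directly, as was done for $d=3$ in \Cref{p:K63d}. Alternatively, one might pursue a direct Rayleigh-quotient upper bound: take a test vector supported on the vertices of an induced $4$-path in $T$ (which must exist since $\mathrm{diam}(T) \geq 4$) and show that the resulting bound never exceeds $a(T_d) = \tfrac{1}{2}\bigl((d+2) - \sqrt{d^2+4d-4}\bigr)$; the delicate point there is to arrange orthogonality against $\mathbf{1}$ without inflating the denominator of the Rayleigh quotient.
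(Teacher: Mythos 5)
This statement is an open conjecture in the paper; no proof is given, so any complete argument would be new. Unfortunately, your proposed reduction cannot work as stated. You reduce the conjecture to the claim that $T_d$ maximises algebraic connectivity among all trees on $2d$ vertices with maximum degree at most $d$, and that claim is \emph{false} already for $d=4$: the tree $H_8$ of \Cref{fig:K84d} (a spider with a single centre of degree $4$ and legs of lengths $2,2,2,1$) has $2d=8$ vertices, maximum degree $4$, diameter $4$, and $a(H_8)\approx 0.382 > 0.354 \approx a(T_4)$. This is precisely the point of \Cref{r:K84d}. Consequently no grafting argument can terminate at $T_d$ as the maximiser, and the enumeration you suggest for $d=4,5$ would refute rather than confirm your intermediate claim. (Your first observation, that $T_d$ is the unique tree in the class of diameter at most $3$, is correct but does not help, since competitors of diameter $4$ such as $H_8$ survive; note also that \cite[Lemma 3.3]{YSZ08} already caps the diameter of the relevant tree at $4$, so the diameter-$\geq 5$ trees your test-vector argument targets are not the problem.)

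The real content of the conjecture is geometric, not combinatorial: by \Cref{l:K2dinddim}$(iii)$ one only needs to consider trees that actually arise in a monochrome subgraph decomposition $(G_1,\ldots,G_d)\in\mathcal{M}(K_{2d},\ell_\infty^d)$ in which \emph{every} $G_i$ is a spanning tree of maximum degree at most $d$, and one must rule out decompositions all of whose parts have algebraic connectivity exceeding $a(T_d)$. For $d=4$ this amounts to showing that $K_8$ admits no monochrome decomposition in $\ell_\infty^4$ into four copies of $H_8$ — a realisability question about which point configurations in $\ell_\infty^4$ can induce a given edge-colouring, in the spirit of \Cref{t:oddhole} and \Cref{l:path2}. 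Your proposal contains no mechanism for excluding such decompositions, so the essential gap remains. A minor positive: your closed form $a(T_d)=\tfrac12\bigl((d+2)-\sqrt{d^2+4d-4}\bigr)$ is correct, since $p_d(x)=(x-d)\bigl(x^2-(d+2)x+2\bigr)$.
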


\begin{remark}
\label{r:K84d}
    Note that, by \Cref{l:K2dinddim}$(ii)$, $a(K_8,\ell_\infty^4)\geq  a(T_4) \approx 0.354$.
    By \Cref{l:K2dinddim}$(iii)$, there exists a spanning tree $T$ in $K_8$ with maximum degree at most $4$ such that
    $a(K_8,\ell_\infty^4)=a(T)$.
    By \cite[Lemma 3.3]{YSZ08}, the tree $T$ must have  diameter at most $4$.
    There are $8$ such trees which are pictured in \Cref{fig:K84d} in increasing order with respect to their algebraic connectivities.

    Note that $T_4=H_7$ and so $H_8$ is the only tree in the list with an algebraic connectivity higher than that of $T_4$. Thus $T$ must be either $T_4$ or $H_8$.
    If $T=H_8$ then $K_8$ is an edge-disjoint union of four copies of $H_8$. Thus to establish the conjecture in the case $d=4$ it would be sufficient to show there is no monochrome subgraph decomposition in $\mathcal{M}(K_8,\ell_\infty^4)$ consisting of four copies of $H_8$.

    We also remark that the algebraic connectivity of $T_d$ is known to be the smallest root of the polynomial $p_d(x) := x^3 - (2d+2)x^2 + (d^2+2d+2)x - 2d$ (see \cite[Proposition 1]{Grone1990}).
\end{remark}

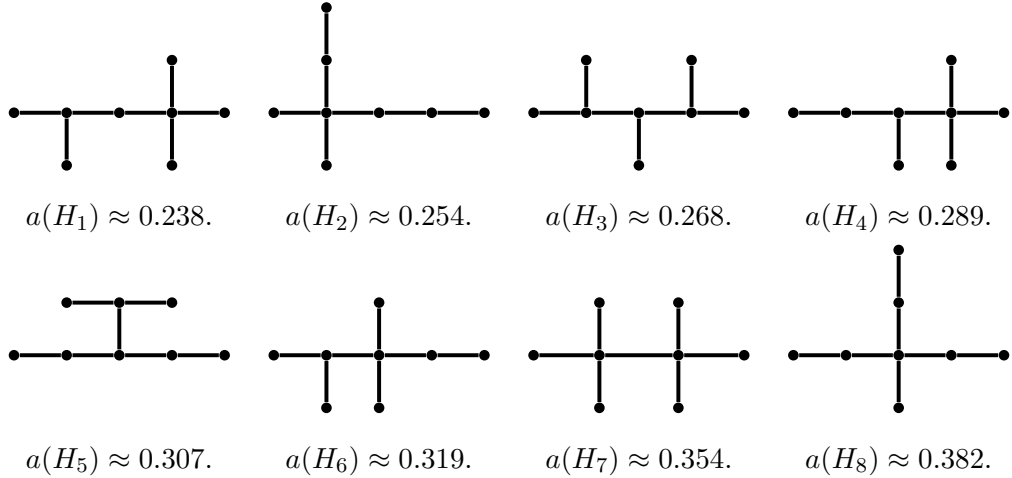
\begin{figure}
    \centering
\begin{tikzpicture}[scale=0.7]
		\node[vertex] (1) at (-2,0) {};
		\node[vertex] (2) at (-1,0) {};
		\node[vertex] (3) at (0,0) {};
		\node[vertex] (4) at (1,0) {};
		\node[vertex] (5) at (2,0) {};
		\node[vertex] (6) at (-1,-1) {};
		\node[vertex] (7) at (1,1) {};
		\node[vertex] (8) at (1,-1) {};
		\draw[edge] (1)edge(2);
		\draw[edge] (2)edge(3);
		\draw[edge] (3)edge(4);
		\draw[edge] (4)edge(5);
		\draw[edge] (6)edge(2);
		\draw[edge] (7)edge(4);
		\draw[edge] (8)edge(4);
            \node (label) at (0,-2) {$a(H_1) \approx 0.238$.};
	\end{tikzpicture}
\quad
\begin{tikzpicture}[scale=0.7]
		\node[vertex] (1) at (-2,0) {};
		\node[vertex] (2) at (-1,0) {};
		\node[vertex] (3) at (0,0) {};
		\node[vertex] (4) at (1,-0) {};
		\node[vertex] (5) at (2,0) {};
		\node[vertex] (6) at (-1,-1) {};
		\node[vertex] (7) at (-1,1) {};
		\node[vertex] (8) at (-1,2) {};
		\draw[edge] (1)edge(2);
		\draw[edge] (2)edge(3);
		\draw[edge] (3)edge(4);
		\draw[edge] (4)edge(5);
		\draw[edge] (6)edge(2);
		\draw[edge] (7)edge(2);
		\draw[edge] (7)edge(8);
            \node (label) at (0,-2) {$a(H_2) \approx 0.254$.};
	\end{tikzpicture}
\quad
\begin{tikzpicture}[scale=0.7]
		\node[vertex] (1) at (-2,0) {};
		\node[vertex] (2) at (-1,0) {};
		\node[vertex] (3) at (0,0) {};
		\node[vertex] (4) at (1,-0) {};
		\node[vertex] (5) at (2,0) {};
		\node[vertex] (6) at (0,-1) {};
		\node[vertex] (7) at (-1,1) {};
		\node[vertex] (8) at (1,1) {};
		\draw[edge] (1)edge(2);
		\draw[edge] (2)edge(3);
		\draw[edge] (3)edge(4);
		\draw[edge] (4)edge(5);
		\draw[edge] (6)edge(3);
		\draw[edge] (7)edge(2);
		\draw[edge] (8)edge(4);
            \node (label) at (0,-2) {$a(H_3) \approx 0.268$.};
	\end{tikzpicture}
\quad
\begin{tikzpicture}[scale=0.7]
		\node[vertex] (1) at (-2,0) {};
		\node[vertex] (2) at (-1,0) {};
		\node[vertex] (3) at (0,0) {};
		\node[vertex] (4) at (1,0) {};
		\node[vertex] (5) at (2,0) {};
		\node[vertex] (6) at (0,-1) {};
		\node[vertex] (7) at (1,1) {};
		\node[vertex] (8) at (1,-1) {};
		\draw[edge] (1)edge(2);
		\draw[edge] (2)edge(3);
		\draw[edge] (3)edge(4);
		\draw[edge] (4)edge(5);
		\draw[edge] (6)edge(3);
		\draw[edge] (7)edge(4);
		\draw[edge] (8)edge(4);
            \node (label) at (0,-2) {$a(H_4) \approx 0.289$.};
	\end{tikzpicture}
        \quad
  \begin{tikzpicture}[scale=0.7]
		\node[vertex] (1) at (-2,0) {};
		\node[vertex] (2) at (-1,0) {};
		\node[vertex] (3) at (0,0) {};
		\node[vertex] (4) at (1,0) {};
		\node[vertex] (5) at (2,0) {};
		\node[vertex] (6) at (1,1) {};
		\node[vertex] (7) at (0,1) {};
		\node[vertex] (8) at (-1,1) {};
		\draw[edge] (1)edge(2);
		\draw[edge] (2)edge(3);
		\draw[edge] (3)edge(4);
		\draw[edge] (4)edge(5);
		\draw[edge] (7)edge(6);
		\draw[edge] (7)edge(3);
		\draw[edge] (7)edge(8);
            \node (label) at (0,-2) {$a(H_5) \approx 0.307$.};
	\end{tikzpicture}
    \quad
    \begin{tikzpicture}[scale=0.7]
		\node[vertex] (1) at (-2,0) {};
		\node[vertex] (2) at (-1,0) {};
		\node[vertex] (3) at (0,0) {};
		\node[vertex] (4) at (1,0) {};
		\node[vertex] (5) at (2,0) {};
		\node[vertex] (6) at (-1,-1) {};
		\node[vertex] (7) at (0,1) {};
		\node[vertex] (8) at (0,-1) {};
		\draw[edge] (1)edge(2);
		\draw[edge] (2)edge(3);
		\draw[edge] (3)edge(4);
		\draw[edge] (4)edge(5);
		\draw[edge] (6)edge(2);
		\draw[edge] (7)edge(3);
		\draw[edge] (8)edge(3);
            \node (label) at (0,-2) {$a(H_6) \approx 0.319$.};
	\end{tikzpicture}
      \quad
    \begin{tikzpicture}[scale=0.7]
		\node[vertex] (1) at (-2,0) {};
		\node[vertex] (2) at (-0.75,0) {};
		\node[vertex] (3) at (0.75,1) {};
		\node[vertex] (4) at (0.75,0) {};
		\node[vertex] (5) at (2,0) {};
		\node[vertex] (6) at (-0.75,-1) {};
		\node[vertex] (7) at (-0.75,1) {};
		\node[vertex] (8) at (0.75,-1) {};
		\draw[edge] (2)edge(1);
		\draw[edge] (2)edge(6);
		\draw[edge] (2)edge(7);
		\draw[edge] (2)edge(4);
		\draw[edge] (4)edge(3);
		\draw[edge] (4)edge(5);
		\draw[edge] (4)edge(8);
            \node (label) at (0,-2) {$a(H_7) \approx 0.354$.};
	\end{tikzpicture}
\quad
  \begin{tikzpicture}[scale=0.7]
		\node[vertex] (1) at (-2,0) {};
		\node[vertex] (2) at (-1,0) {};
		\node[vertex] (3) at (0,0) {};
		\node[vertex] (4) at (1,0) {};
		\node[vertex] (5) at (2,0) {};
		\node[vertex] (6) at (0,-1) {};
		\node[vertex] (7) at (0,1) {};
		\node[vertex] (8) at (0,2) {};
		\draw[edge] (1)edge(2);
		\draw[edge] (2)edge(3);
		\draw[edge] (3)edge(4);
		\draw[edge] (4)edge(5);
		\draw[edge] (6)edge(3);
		\draw[edge] (7)edge(3);
		\draw[edge] (7)edge(8);
            \node (label) at (0,-2) {$a(H_8) \approx 0.382$.};
	\end{tikzpicture}

    \caption{List of trees with $8$ vertices, maximum degree at most $4$ and diameter at most $4$ in  \Cref{r:K84d}. Note that $T_4=H_7$.}
    \label{fig:K84d}
\end{figure}

\section{Redundant rigidity}
\label{s:rr}
A framework $(G,p)$ in a normed space $X$ is said to be {\em vertex-redundantly  rigid}  if it is infinitesimally rigid and every framework $(H,p_H)$ obtained by deleting a vertex $v_0$ from $G$ together with its incident edges, and setting $p_H=(p_v)_{v\in V(H)}$, is infinitesimally rigid.
A graph $G=(V,E)$ is {\em vertex-redundantly rigid in $X$} if there exists a vertex-redundantly rigid framework $(G,p)$ in $X$.

\begin{proposition}\label{p:rr}
    Let $X=(\mathbb{R}^d,\|\cdot\|_X)$ and let $G=(V,E)$ be a graph with at least $d+2$ vertices.
     \begin{enumerate}
    \item If $a(G,X) > \gamma(X)$ then  $G$ is vertex-redundantly rigid in $X$.
    \item   If $G$ is minimally rigid in $X$ then  $G$ is not vertex-redundantly rigid in $X$.
    \end{enumerate}
\end{proposition}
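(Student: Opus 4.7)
For part $(i)$, I would combine the vertex-deletion estimate \Cref{p:vertexdeletion} with the density result \Cref{p:alg}$(ii)$ to extract a single framework witnessing vertex-redundant rigidity. Let $U'$ denote the set of $p \in (\mathbb{R}^d)^V$ for which every vertex-deleted framework $(G-v_0, (p_v)_{v\neq v_0})$ has full affine span; since each of the $|V|$ full-affine-span conditions cuts out an open dense set, $U'$ is itself open and dense. By \Cref{p:alg}$(ii)$, $a(G,X)$ equals the supremum of $\lambda_{k(X)+1}(L(G,p))$ over $p \in \mathcal{W}(G,X) \cap U'$, so the hypothesis $a(G,X) > \gamma(X)$ lets me choose such a $p$ with $\lambda_{k(X)+1}(L(G,p)) > \gamma(X)$. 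For this $p$, \Cref{p:vertexdeletion} yields $\lambda_{k(X)+1}(L(G-v_0,(p_v)_{v\neq v_0})) \geq \lambda_{k(X)+1}(L(G,p)) - \gamma(X) > 0$ for every $v_0 \in V$, and \Cref{l:rigidityeigenvalue}$(ii)$ then says that each vertex-deleted framework and $(G,p)$ itself are infinitesimally rigid. Hence $(G,p)$ is a vertex-redundantly rigid framework, so $G$ is vertex-redundantly rigid in $X$.

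For part $(ii)$, I would proceed by an edge-counting contradiction. First I observe that minimal rigidity forces $|E(G)| = d|V| - k(X)$: by \Cref{p:rigid} and \Cref{l:affine}, rigidity of $G$ gives an infinitesimally rigid framework $(G,p)$ with full affine span, so $\rank R(G,p) = d|V| - k(X)$; if $|E(G)|$ were strictly larger, some row of $R(G,p)$ would be linearly dependent on the others, and removing the corresponding edge would preserve the rank and hence infinitesimal rigidity, contradicting minimality. Now suppose for contradiction that $G$ is also vertex-redundantly rigid. For each $v \in V$, the graph $G - v$ is rigid with $|V|-1 \geq d+1$ vertices, so the same rank argument applied to $G-v$ gives $|E(G-v)| \geq d(|V|-1) - k(X)$; subtracting from $|E(G)| = d|V| - k(X)$ yields $\deg_G(v) \leq d$. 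Summing over $v$, $2|E(G)| \leq d|V|$, which together with $|E(G)| = d|V| - k(X)$ forces
\begin{equation*}
    k(X) \geq \tfrac{d|V|}{2} \geq \tfrac{d(d+2)}{2} > \tbinom{d+1}{2},
\end{equation*}
contradicting the standard upper bound $k(X) \leq \binom{d+1}{2}$ on the dimension of $\mathcal{T}(X)$.

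The main obstacle is really the conceptual fact $k(X) \leq \binom{d+1}{2}$, which follows from the linear isometry group of any $d$-dimensional real normed space being, up to conjugation, a subgroup of $O(d)$; this is standard but not explicitly developed in the paper and must be cited as an input. All remaining steps are direct applications of machinery already established in the preceding sections: the density of well-behaved frameworks (\Cref{p:alg}, \Cref{l:affine}), the vertex-deletion eigenvalue estimate (\Cref{p:vertexdeletion}), and the rank characterisation of infinitesimal rigidity (\Cref{l:rigidityeigenvalue}).
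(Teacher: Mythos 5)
Your proof is correct and follows essentially the same route as the paper: part $(i)$ uses the same open dense set $U'$ together with \Cref{p:alg}$(ii)$, \Cref{p:vertexdeletion} and \Cref{l:rigidityeigenvalue}$(ii)$, and part $(ii)$ is the same degree-counting contradiction ending in $|E|\leq \frac{d}{2}|V| < d|V|-\binom{d+1}{2}\leq d|V|-k(X)$. The only cosmetic difference is that you re-derive the edge counts $|E|=d|V|-k(X)$ and $|E(G-v)|\geq d(|V|-1)-k(X)$ from the rank of the rigidity matrix, where the paper simply cites \cite[Corollary 4.13]{Dewar21}, and you rightly flag that the bound $k(X)\leq\binom{d+1}{2}$ is an external input used silently in the paper.
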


\begin{proof}
     $(i)$: Let $U'$ be the set of points $p\in (\mathbb{R}^d)^V$ such that, for each vertex $v_0$ in $G$, the set $\{p_v:v\in V\backslash\{v_0\}\}$ has full affine span in $X$. Note that $U'$ is an open and dense subset of $(\mathbb{R}^d)^V$. Thus, by \Cref{p:dense}$(ii)$, there exists $p\in \mathcal{W}(G,X)\cap U'$  such that  $\lambda_{k(X)+1}(L(G,p)) >\gamma(X)$. 
     By \Cref{p:vertexdeletion} and \Cref{l:rigidityeigenvalue}$(ii)$,  the framework $(G,p)$ is vertex-redundantly rigid. 

     $(ii)$: If $G$ is vertex-redundantly rigid in $X$ then $G-v$ is rigid in $X$ for all $v\in V$.
    By \cite[Corollary 4.13]{Dewar21},  $|E(G-v)| \geq d(|V|-1) - k(X)$ for each $v \in V$, and $|E|=d|V|-k(X)$.
    Hence, for any $v \in V$,
    \begin{equation*}
        d|V|-k(X) - \deg_G(v) = |E| - \deg_G(v) = |E(G-v)| \geq d(|V|-1) - k(X),
    \end{equation*}
    which implies $\deg_G(v) \leq d$ for each $v \in V$.
    Since $|V| \geq d+2$ and $k(X) \leq \binom{d+1}{2}$,
    \begin{equation*}
        |E| \leq \frac{d}{2}|V| < d|V| - \tbinom{d+1}{2} \leq d|V| - k(X),
    \end{equation*}
    contradicting that $|E|=d|V|-k(X)$.
\end{proof}

\begin{proposition}
    \label{p:mr}
    Let $X=(\mathbb{R}^d,\|\cdot\|_X)$ and let $G=(V,E)$ be a graph with at least $d+1$ vertices. If $G$ is minimally rigid in $X$ then
     $a(G,X) \leq \gamma(X)$.
\end{proposition}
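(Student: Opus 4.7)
The statement is essentially the contrapositive of Proposition~\ref{p:rr}, so the plan is simply to combine its two parts. This is a one-step argument; all the real work has already been carried out in the proof of Proposition~\ref{p:rr}.

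First I would assume that $G$ is minimally rigid in $X$, and suppose for contradiction that $a(G,X)>\gamma(X)$. By Proposition~\ref{p:rr}(i), the strict inequality $a(G,X)>\gamma(X)$ forces $G$ to be vertex-redundantly rigid in $X$: there exists a framework $(G,p)$ that remains infinitesimally rigid after deleting any single vertex together with its incident edges. On the other hand, Proposition~\ref{p:rr}(ii) asserts that a minimally rigid graph cannot be vertex-redundantly rigid in $X$, since minimal rigidity pins the edge count to $|E|=d|V|-k(X)$, which is incompatible with the rigidity of $G-v$ for every vertex $v$ once $|V|$ is large enough. These two conclusions contradict one another, so $a(G,X)\leq\gamma(X)$.

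There is essentially no obstacle here beyond invoking the previous proposition correctly. The only mild technical point worth flagging is that Proposition~\ref{p:rr} was formulated for $|V|\geq d+2$, slightly stronger than the hypothesis $|V|\geq d+1$ in the present statement; however the boundary case $|V|=d+1$ is highly constrained by the edge-count identity $|E|=d|V|-k(X)$ (for instance, when $k(X)<\binom{d+1}{2}$ the graph on $d+1$ vertices cannot even satisfy this equality), so it can be addressed as a separate small case if needed.
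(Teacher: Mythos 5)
Your main argument, for $|V|\geq d+2$, is exactly the paper's: combine \Cref{p:rr}$(ii)$ (a minimally rigid graph is not vertex-redundantly rigid) with \Cref{p:rr}$(i)$ (if $a(G,X)>\gamma(X)$ then $G$ is vertex-redundantly rigid) to conclude $a(G,X)\leq\gamma(X)$. That part is correct and is word-for-word the first half of the paper's proof. You also correctly notice that \Cref{p:rr} needs $|V|\geq d+2$ while the proposition only assumes $|V|\geq d+1$, so the boundary case must be handled separately.

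The gap is that your dismissal of the boundary case is incomplete. When $|V|=d+1$, minimal rigidity forces $|E|=d(d+1)-k(X)=2\binom{d+1}{2}-k(X)\leq\binom{d+1}{2}$, hence $k(X)\geq\binom{d+1}{2}$. Since $k(X)\leq\binom{d+1}{2}$ always holds, the case is vacuous only when $k(X)<\binom{d+1}{2}$ — but when $k(X)=\binom{d+1}{2}$ the graph $K_{d+1}$ \emph{is} minimally rigid and the claim must actually be proved. This is precisely the subcase where the paper does nontrivial work: it invokes \cite[Theorem 5.8]{Dewar21} to deduce that a space admitting a minimally rigid graph on $d+1$ vertices is isometrically isomorphic to $\ell_2^d$, uses $a(K_{d+1},\ell_2^d)=1$ from \cite[Theorem 1.2]{LNPR23}, and then applies \Cref{c:isometric-alg} together with the computation $\|\Psi\|_2^2=\gamma(X)$ (which uses $\|\cdot\|_X^*=\|\cdot\|_X$ under the identification via $\Psi$) to conclude $a(G,X)\leq\gamma(X)$. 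None of this is a routine "small case": without it the proposition is unproved for, e.g., $X=\ell_2^d$ and $G=K_{d+1}$, where $a(K_{d+1},\ell_2^d)=1=\gamma(\ell_2^d)$ shows the bound is attained with equality, so no soft argument can give strict slack there.
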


\begin{proof}
    First suppose $|V| \geq d+2$.
    By \Cref{p:rr}$(ii)$, $G$ is not vertex-redundantly rigid in $X$. Thus, the result follows by \Cref{p:rr}$(i)$.
    
    Next suppose $|V|=d+1$.
    As $G$ is minimally rigid in $X$,
    it follows from \cite[Theorem 5.8]{Dewar21} that $X$ is isometrically isomorphic to $d$-dimensional Euclidean space. Let $\Psi: \ell_2^d \rightarrow X$ be a linear isometry.
    By \cite[Theorem 1.2]{LNPR23}, $a(G,\ell_2^d) = 1$.
    Thus, by \Cref{c:isometric-alg},
     $   a(G,X) \leq \lambda_n(\Psi^\top\Psi)\,a(G,\ell_2^d)  = \|\Psi \|_2^2$.
    Note that $\|\cdot\|_X^* = \|\cdot\|_X$ and so,
    $$\|\Psi\|_2=\sup_{\|x\|_2=1}\|\Psi(x)\|_2 = \sup_{\|\Psi(x)\|_X=1}\|\Psi(x)\|_2 = \sup_{\|y\|_X^*=1}\|y\|_2 = \gamma(X)^{\tfrac{1}{2}}.$$
    Hence $a(G,X) \leq \gamma(X)$.
\end{proof}

A framework $(G,p)$ in $X$ is said to be {\em edge-redundantly  rigid}  if it is infinitesimally rigid and every framework obtained by deleting an edge $vw$ from $G$ is infinitesimally rigid.
A graph $G$ is {\em edge-redundantly rigid} in  $X$ if there exists a framework $(G,p)$ in $X$ which is edge-redundantly rigid.

\begin{proposition}
    \label{p:k2d+1}
	For every $d \geq 2$,
	the complete graph $K_{2d+1}$ is not edge-redundantly rigid in $\ell_\infty^d$. 
\end{proposition}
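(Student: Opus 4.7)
The plan is to argue by contradiction using the monochrome subgraph decomposition of a framework in $\ell_\infty^d$, reducing edge-redundant rigidity to a combinatorial condition that clashes with \Cref{t:oddhole}. Suppose $(K_{2d+1},p)$ were edge-redundantly rigid in $\ell_\infty^d$, and let $G_1,\ldots,G_d$ be its induced monochrome subgraphs. First I would apply \Cref{l:linfinitylaplacian}$(ii)$ to get $\dim\ker L(K_{2d+1},p)=\sum_{i=1}^d c(G_i)$, where $c(\cdot)$ counts connected components. Since $\mathcal{T}(\ell_\infty^d)$ consists of translations, the trivial flex space $\mathcal{T}(K_{2d+1},p)$ has dimension $d=k(\ell_\infty^d)$ for any point configuration, so infinitesimal rigidity of $(K_{2d+1},p)$ is equivalent to each $G_i$ being connected.

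Next I would exploit edge-redundancy: for any edge $vw\in E(G_i)$, removing $vw$ produces the monochrome decomposition $(G_1,\ldots,G_i\setminus vw,\ldots,G_d)$ of $K_{2d+1}-vw$, and this framework is also infinitesimally rigid by assumption. Hence $G_i\setminus vw$ is connected, meaning $vw$ is not a bridge in $G_i$. As $vw$ was arbitrary, each $G_i$ is a connected bridgeless graph, in particular has minimum degree at least $2$, and therefore $|E(G_i)|\geq 2d+1$.

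The crux is a tight edge count: since $\sum_{i=1}^d |E(G_i)|=|E(K_{2d+1})|=d(2d+1)$, the lower bound forces equality $|E(G_i)|=2d+1$ for every $i$. The degree-sum identity then makes each $G_i$ exactly $2$-regular, and hence (being connected) the cycle $C_{2d+1}$. For $d\geq 2$, this is an induced cycle in $G_i$ of odd length at least $5$, i.e.\ an odd hole, contradicting \Cref{t:oddhole}.

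The main potential obstacle is the first step, where one must check that infinitesimal rigidity is equivalent to monochrome connectedness without assuming full affine span; this is handled by noting that translations act freely on any nonempty configuration, so the dimension of $\mathcal{T}(K_{2d+1},p)$ is always $d$. Once that is in place, the combinatorial squeeze on edge counts is routine and the odd-hole-free theorem does the rest.
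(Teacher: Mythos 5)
Your proof is correct and follows essentially the same route as the paper: both arguments use the monochrome decomposition to show each monochrome subgraph must be a spanning cycle $C_{2d+1}$ via the edge count $d(2d+1)$, and then invoke \Cref{t:oddhole} for the contradiction. The only difference is that you carefully justify the 2-edge-connectivity of each monochrome subgraph (via the equivalence of infinitesimal rigidity with connectivity of all monochrome subgraphs), a step the paper asserts without elaboration.
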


\begin{proof}
	Suppose for contradiction that there exists an edge-redundantly rigid framework $(K_{2d+1},p)$ in $\ell_\infty^d$.
	Each monochrome subgraph of $(K_{2d+1},p)$ is 2-edge-connected,
	and hence must have at least $2d+1$ edges.
	In fact, as $K_{2d+1}$ has $d(2d+1)$ edges,
	each monochrome subgraph has exactly $2d+1$ edges.
	Hence each monochrome subgraph of $(G,p)$ is a spanning cycle.
    However, this contradicts \Cref{t:oddhole}.
\end{proof}

It follows from \Cref{p:k2d+1} that $2d+2$ or more vertices are needed for edge-redundant rigidity in $\ell_\infty^d$.
Because of this, the authors would (somewhat intrepidly) conjecture the following.

\begin{conjecture}\label{conj:redrig}
	For every $d \geq 2$ and every $n \geq 2d+2$,
	the complete graph $K_{n}$ is edge-redundantly rigid in $\ell_\infty^d$. 
\end{conjecture}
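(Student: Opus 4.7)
By \Cref{l:linfinitylaplacian}(iii), an infinitesimally rigid framework $(K_n,p)$ in $\ell_\infty^d$ with full affine span is edge-redundantly rigid if and only if, for every edge $e$, each monochrome subgraph of $(K_n-e,p)$ is connected. Since deleting an edge removes it from exactly one monochrome subgraph $G_i$, this is equivalent to each $G_i$ being bridgeless, i.e., $2$-edge-connected. Thus the conjecture reduces to showing that, for all $d\geq 2$ and $n\geq 2d+2$, there exists $p\in\mathcal{W}(K_n,\ell_\infty^d)$ whose induced monochrome decomposition $(G_1,\ldots,G_d)$ consists entirely of $2$-edge-connected spanning subgraphs of $K_n$. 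Note that the odd-hole-free constraint from \Cref{t:oddhole} is automatically satisfied by any realised monochrome subgraph, so it does not impose an additional obstruction (although it does rule out natural candidates such as odd cycles, explaining the lower bound $n \geq 2d+2$).

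The plan is to proceed by induction on $n$, with base case $n=2d+2$. For the base case, I would build on the explicit construction from \cite[Proposition 3.12]{dewar2025uniquelyrealisablegraphspolyhedral} (used already in \Cref{l:K2dinddim}) which realises a framework on $K_{2d}$ whose monochrome subgraphs are each isomorphic to the tree $T_d$. Adjoining two additional points at positions $p_{2d+1}$ and $p_{2d+2}$ contributes $4d+1$ new edges to be distributed among the $d$ monochrome cones $\cone(\pm F_i)$. The bridges of $T_d$ are the central edge and the $2(d-1)$ pendant edges, so each monochrome subgraph needs enough new paths through $\{v_{2d+1},v_{2d+2}\}$ to bypass each of these bridges. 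With $p_{2d+1}$ and $p_{2d+2}$ placed symmetrically relative to the existing $T_d$-configuration (e.g.\ on opposite sides of the central edge), a direct coordinate-level check should verify 2-edge-connectivity of every $G_i$. For the inductive step, given an edge-redundantly rigid $(K_n,p)$ with $n\geq 2d+2$, I would add a new vertex $v_{n+1}$ at a position $p_{n+1}$ and exploit the elementary fact that augmenting a $2$-edge-connected graph by a new vertex of degree at least $2$ preserves $2$-edge-connectivity. It therefore suffices to choose $p_{n+1}$ so that for each $i\in[d]$, at least two of the differences $p_v-p_{n+1}$ lie in $\cone(\pm F_i)$.

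The main obstacle is in the inductive step: not every configuration $p$ admits such a $p_{n+1}$, and the mapping from positions to monochrome classifications is combinatorially intricate. A likely remedy is to first perturb $p$ within the (open, by \Cref{l:continuity2} and \Cref{l:continuity}) set of 2-edge-connectifying configurations into one where the points are generically distributed across the $2d$ cones with respect to some ``centroid'' candidate $p_{n+1}$; since $n\geq 2d+2$, a pigeonhole argument then delivers degree at least $2$ in each monochrome. An alternative, non-inductive approach is to construct $(K_n,p)$ directly for every $n\geq 2d+2$ by placing points on a carefully chosen algebraic curve or perturbed lattice in $\mathbb{R}^d$ and analysing the resulting monochrome subgraphs. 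In dimension $d=2$ this might be accomplished using \cite[Theorem 4.3]{ck20}, which realises any spanning tree decomposition of $K_n$ as a monochrome subgraph decomposition in $\ell_\infty^2$, reducing the problem to a purely combinatorial one of decomposing $K_n$ into two 2-edge-connected spanning subgraphs; the higher-dimensional analogue of that realisation result appears to be the principal technical barrier to completing the proof in full generality.
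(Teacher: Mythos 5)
This statement is labelled as a conjecture in the paper, and the paper does not prove it: it only verifies the case $d=2$, by taking the explicit octahedral configuration of \cite[Example 7.10]{dewar2025uniquelyrealisablegraphspolyhedral} for $K_6$ and then adding further vertices at generic points near $(0.5,0.9)$. Your opening reduction is correct and matches the paper's own framing (it is exactly the observation used in \Cref{p:k2d+1}): since deleting an edge removes it from a single monochrome subgraph, an infinitesimally rigid framework $(K_n,p)$ in $\ell_\infty^d$ is edge-redundantly rigid if and only if every monochrome subgraph is $2$-edge-connected. So the conjecture is equivalent to realising, for each $n\geq 2d+2$, a monochrome decomposition of $K_n$ into $d$ spanning $2$-edge-connected subgraphs.

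However, what follows is a plan rather than a proof, and the plan has concrete gaps. First, the base case $n=2d+2$ is never actually constructed: the claim that adjoining two points to the $T_d$-configuration of \Cref{l:K2dinddim} ``should verify'' $2$-edge-connectivity is precisely the hard part, since the $4d+1$ new edges must cover all $2d-1$ bridges of each of the $d$ trees simultaneously, and no coordinates are exhibited. Second, the inductive step's pigeonhole argument fails as stated: a new vertex contributes $n\geq 2d+2$ edges distributed among $d$ colour classes, which guarantees \emph{some} class receives at least three edges but does not prevent another class from receiving none; ensuring degree at least $2$ in \emph{every} monochrome subgraph requires controlling the position of $p_{n+1}$ relative to all existing points, which is the actual content of the paper's $d=2$ argument. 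Third, the appeal to \cite[Theorem 4.3]{ck20} is a misapplication: that result (as used in \Cref{p:2dminrigidellinf}) realises decompositions of a graph with exactly $2(|V|-1)$ edges into two spanning \emph{trees}, which are never $2$-edge-connected, and it says nothing about realising arbitrary two-part decompositions of $K_n$; indeed \Cref{t:oddhole} shows that realisability of a prescribed decomposition is genuinely obstructed. The reduction you give is sound and useful, but the conjecture remains open for $d\geq 3$ on the basis of this proposal.
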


\begin{remark}
\Cref{conj:redrig} is true when $d=2$. To see this, first observe that for $n = 6$ we can take $p \in \mathcal{W}(K_6,\ell_\infty^2)$ as described for the octahedral graph $K_{2,2,2}$ in \cite[Example 7.10]{dewar2025uniquelyrealisablegraphspolyhedral} and obtain an edge-redundantly rigid framework $(K_6,p)$.
For $n=7$,  take the previous framework $(K_6,p)$ and add the new vertex at $(0.5,0.9)$, and for higher values of $n$ we can add additional vertices at generic points sufficiently close to $(0.5,0.9)$. 
\end{remark}

\subsection*{Acknowledgements}
SD was supported by the Heilbronn Institute for Mathematical Research.
DK was supported by a Mary Immaculate College Research Sabbatical Award.

\bibliographystyle{plainurl}
\bibliography{ref}

\end{document}